\documentclass[12pt,english]{article}
\usepackage{fancyhdr}
\usepackage{amssymb,amsfonts}
\usepackage{amsmath}
\usepackage{amsthm}
\usepackage{graphics}
\usepackage{graphicx}
\usepackage{subfigure,epsfig,psfrag}
\usepackage{hyperref}
\usepackage{babel}
\usepackage[latin1]{inputenc}
\usepackage[dvipsnames]{xcolor}



\newcommand{\esp}{\hspace{0.06cm}}
\newcommand{\N}{\mathbb N}
\newcommand{\cP}{\mathcal P}

\newcommand{\R}{\mathbb R}
\newcommand{\Z}{\mathbb Z}
\newcommand{\clo}{\mathrm{S}^1}

\newcommand{\dist}{\mathrm{dist}_{\infty}}
\newcommand{\var}{\mathrm{var}}
\newcommand{\id}{\mathrm{id}}
\newcommand{\Diff}{\mathrm{Diff}}
\newcommand{\tf}{\tilde f}
\newcommand{\tX}{\tilde X}
\newcommand{\tY}{\tilde Y}
\newcommand{\ie}{\emph{i.e. }}
\newcommand{\cf}{cf. }

\theoremstyle{theorem}
\newtheorem{thm}{Theorem}[section]
\newtheorem{thmintro}{Theorem}

\newtheorem{prop}[thm]{Proposition}
\newtheorem{lem}[thm]{Lemma}
\newtheorem{qs}[thm]{Question}

\newtheorem{qsintro}{Question}
\newtheorem{corintro}{Corollary}
\theoremstyle{definition}
\newtheorem{rem}[thm]{Remark}

\newtheorem{ex}[thm]{Example}

\hyphenation{nu-me-ra-ble} \setlength{\oddsidemargin}{-0.05in}

\setlength{\evensidemargin}{-0.05in}

\setlength{\textwidth}{6.5in}
\setlength{\topmargin}{-0.5in}
\setlength{\textheight}{8.7in}

\begin{document}

\pagestyle{fancy}
\rhead{On diffeomorphisms of the interval}

\author{H\'el\`ene Eynard-Bontemps \, \& \, Andr\'es Navas}

\title{Mather invariant, distortion, and conjugates for diffeomorphisms of the interval}
\maketitle


Although the dynamics of (orientation-preserving) homeomorphisms of the interval is quite simple, the behavior in different differentiability 
classes is much harder to understand.  Many fundamental problems in this regard were solved some decades ago ({\em e.g.} existence 
of generating vector fields for germs of diffeomorphisms \cite{sergeraert,szekeres}, triviality of the centralizer of a generic diffeomorphism 
\cite{kopell}, description of complete invariants of $C^1$ conjugacy \cite{yoccoz}, etc). However, several classical questions 
-which are highly sensitive to the differentiability assumptions- remain open. 
Besides, many perspectives have been recently pursued, raising new and challenging problems. 
In particular, over the last years, much attention has been devoted to the study of (the closure of) 
conjugacy classes of diffeomorphisms. Although this can be addressed in any dimension, 
the 1-dimensional case offers a good setting to start with. 

For circle diffeomorphisms, two different cases arise, according to the rotation number of the map.
For an irrational rotation number, the fact that the corresponding rotation is contained in the closure 
of the conjugacy class of the diffeomorphism is known to hold:

\vspace{0.1cm}

\noindent - in the continuous setting (this is an elementary remark that goes back to Herman \cite{herman});

\vspace{0.1cm}

\noindent - in the $C^1$ setting \cite{mio1};

\vspace{0.1cm}

\noindent - in the $C^{1+\mathrm{ac}}$ setting (where ``ac'' stands for {\em absolutely continuous derivative}) \cite{mio2};

\vspace{0.1cm}

\noindent - in the $C^{\infty}$ setting \cite{avila-krikorian}.

\vspace{0.1cm}

\noindent Together with a classical result of Yoccoz (according to which linearizable diffeomorphisms with irrational rotation number are dense  
in the $C^{\infty}$ topology \cite{yoccoz}), these results yield the density of the conjugacy class of a single diffeomorphism in the space of 
diffeomorphisms of the same irrational rotation number in classes $C^0$, $C^1$, $C^{1+\mathrm{ac}}$ and $C^{\infty}$. However, for 
other regularity classes, the situation is less understood. A particularly striking open case is that of $C^2$ regularity, as 
it has been stressed in \cite{mio-ICM,mio-gato}: 


\begin{qsintro} 
\label{q:conj-cercle}
Does the conjugacy class of a $C^2$ circle 
diffeomorphism with irrational rotation number contain the corresponding rotation in its $C^2$ closure ?
\end{qsintro}


The case of diffeomorphisms with rational rotation number reduces to that of interval diffeomorphisms (some straightforward issues 
involving periodic points need to be addressed for this reduction). 
In this setting, it is natural to first restrict the study to the space $\mathrm{Diff}_+^{r,\Delta} ([0,1])$ 
of (orientation-preserving) $C^r$ diffeomorphisms with no fixed point in the interior 
(the letter $\Delta$ meaning this latter condition). In this framework, the main question becomes: does the 
identity belong to the closure of the conjugacy class of a given element therein~? The answer is known to be affirmative in three cases:

\vspace{0.1cm}

\noindent - In the $C^0$ setting (this is an easy exercise: any two elements in $\mathrm{Homeo}_+^{\Delta}([0,1])$ are topologically conjugate, the conjugating map being orientation preserving in case the two dynamics ``point in the same direction''); 

\vspace{0.1cm}

\noindent - In the $C^1$ setting,  provided the endpoints are parabolic fixed points (this is an obvious necessary condition); see \cite{mio1}. 

\vspace{0.1cm}

\noindent - In the $C^{1+\mathrm{bv}}$ setting, where a necessary and sufficient condition is the vanishing of the {\em asymptotic distortion} defined as
$$\mathrm{dist}_{\infty} (f) := \lim_{n \to \infty} \frac{1}{n} \mathrm{var} (\log Df^n)$$
(by ``$bv$'' we mean {\em derivative with finite total variation}).  
This was proved in \cite{mio2}, where the notion of asymptotic distortion was introduced as a tool to deal with the conjugacy problem. We refer to \S \ref{s:dist} for a review of this concept.

There is, however, a more classical obstruction to the conjugacy property at least for $C^2$ diffeomorphisms, namely, the {\em Mather invariant}, to which \S \ref{s:mather} is devoted. 
Recall that the Mather invariant $M_f$ of an element $f \in \mathrm{Diff}_+^{2,\Delta} ([0,1])$ is a $C^2$ circle diffeomorphism modulo pre- and post-composition with rotations, which 
is invariant under conjugacy of $f$ by $C^1$ diffeomorphisms and depends continuously on $f$ in the $C^2$ topology in a very broad sense (the latter was proved by Yoccoz in 
\cite{yoccoz}). In particular, this invariant must be trivial (\ie coincide with the class of rotations) if there exists a sequence of conjugates 
$h_n f h_n^{-1}$ by $C^2$ diffeomorphisms $h_n$ that converges to the identity in the $C^2$ topology.

The discussion above suggests a direct relation between the asymptotic distortion and the Mather invariant of elements in $\mathrm{Diff}_+^{2,\Delta} ([0,1])$. The goal of this work is to confirm this claim, to use this relation to better understand the properties of the asymptotic distortion and related phenomena, and finally proceed to a somewhat unexpected application concerning distorted elements in diffeomorphism groups.  Our first result (proven in \S \ref{s:vanish}) is:

\begin{thmintro}
\label{t:vanish}
The asymptotic distortion of an element $f \!\in\! \mathrm{Diff}_+^{2,\Delta} ([0,1])$ 
vanishes if and only if the endpoints are parabolic fixed points of $f$ and the Mather invariant of $f$ is trivial.
\end{thmintro}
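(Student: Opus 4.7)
The plan is to split the equivalence into two implications and handle the parabolicity condition separately from the Mather condition. I start with the observation that if, say, $0$ is not a parabolic fixed point of $f$, then $\log Df^n(0) = n\log Df(0)$ and $\log Df^n(1) = n\log Df(1)$; since $f$ has no interior fixed point, one has $Df(0) \geq 1 \geq Df(1)$ (or the reverse), and non-parabolicity at one end forces a strict inequality. Therefore
\[
\var(\log Df^n) \;\geq\; \bigl|\log Df^n(0) - \log Df^n(1)\bigr| \;=\; n\,\bigl|\log Df(0) - \log Df(1)\bigr| \;>\; 0,
\]
so $\dist(f) > 0$. This disposes of the parabolicity requirement in both directions. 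From now on I assume both endpoints are parabolic. By Szekeres--Sergeraert there exist $C^1$ vector fields $X_0, X_1$ on half-neighborhoods of $0$ and $1$ vanishing at these points, whose time-one maps coincide with $f$, together with $C^1$ straightenings $\psi_0, \psi_1$ conjugating the local dynamics to the unit translation. The Mather invariant $M_f$ is defined by propagating a fundamental domain from $\psi_0$-coordinates to $\psi_1$-coordinates via a high iterate of $f$ and quotienting by the $\Z$-action of integer translations.

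For the ``if'' direction (parabolic endpoints and trivial $M_f$ imply $\dist(f) = 0$), triviality of $M_f$ lets one adjust $\psi_0, \psi_1$ by rotations so they glue into a global $C^1$ straightening; equivalently, $X_0$ and $X_1$ extend to a single $C^1$ vector field $X$ on $[0,1]$ with $f = \phi^1_X$. On the interior one then has the coboundary identity $\log Df^n(x) = \log X(f^n x) - \log X(x)$. Splitting $[0,1]$ into a compact middle piece where $X$ is bounded away from $0$ (variation contribution $O(1)$) and two endpoint neighborhoods where the iterated fundamental domains $[f^k(x_0), f^{k+1}(x_0)]$ accumulate fast on $\{0,1\}$, the cancellation between $\log X \circ f^n$ and $\log X$ gives $\var(\log Df^n) = o(n)$, hence $\dist(f) = 0$.

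The remaining and most delicate half is the ``only if'' direction in the parabolic case: $\dist(f) = 0$ implies $M_f$ trivial. My approach is to express $\log Df^n$ on each fundamental domain $I_k = [f^k(x_0), f^{k+1}(x_0)]$ in terms of $\psi_0$ and $\psi_1$, and to show that a nontrivial mismatch between these straightenings, measured by the class of $\psi_1 \circ \psi_0^{-1}$ modulo integer translations, contributes an essentially constant amount of variation per domain. Summing over the $\sim n$ fundamental domains swept by $f^n$ from near $0$ to near $1$ would then force $\var(\log Df^n) \geq c(M_f)\cdot n$ for some $c(M_f) > 0$ whenever $M_f$ is nontrivial. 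The main obstacle is twofold: first, extracting from the nontriviality of $M_f$ (as a class in $\dos/\text{rotations}$) an effective, strictly positive variation-type quantity; and second, controlling the error coming from the fact that the Szekeres straightenings are only $C^1$ rather than $C^2$. I expect the key input to be the rigidity statement that a $C^2$ circle diffeomorphism representing the trivial Mather class must, modulo rotations, be the identity, combined with a careful comparison of $\var(\log D\psi_0^{-1})$ and $\var(\log D\psi_1^{-1})$ across fundamental intervals near the endpoints.
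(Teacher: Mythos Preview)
Your handling of the parabolicity condition is correct and essentially the same as the paper's.

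For the ``if'' direction, your coboundary idea is the right one, and it is exactly what the paper uses, but the implementation you sketch is problematic: since $X$ vanishes at the endpoints, $\log X$ has infinite total variation on $(0,1)$, so splitting $\var(\log Df^n)$ into ``middle'' and ``ends'' and invoking cancellation does not obviously produce the $o(n)$ bound without a finer estimate (in particular, the sums $\sum_k \var(\log X; I_k)$ you would need are not a priori finite). The paper avoids this by recasting the coboundary structure in the $L^1$ criterion
\[
\dist(f)=\inf_{u\in L^1}\Bigl\|\tfrac{D^2f}{Df}-\bigl((u\circ f)\,Df-u\bigr)\Bigr\|_{L^1}
\]
and taking $u_n = D\log X \cdot \mathbf{1}_{[y_n,z_n]}$ as a truncation of $D\log X$. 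The coboundary defect then lives on three small pieces whose $L^1$ norms are controlled by $\var(\log Df)$ on shrinking neighborhoods of the endpoints (via the key estimate $\var(\log X;[a,f(a)])\le |\log Df(0)|+\var(\log Df;[0,a])$), and these tend to $0$.

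The genuine gap is the ``only if'' direction, which you yourself flag as incomplete. Your plan---to read off a per-domain contribution $c(M_f)>0$ and sum over $\sim n$ domains---runs into exactly the obstacles you name. The paper's route is much shorter and bypasses them entirely. The effective quantity you are looking for is simply $\var(\log DM_f)$, and the crucial identity is $DM_f = (X/Y)\circ\psi_X$, so that
\[
\var(\log DM_f)=\var\bigl(\log\tfrac{X}{Y};[a,f(a)]\bigr)\le \var(\log X;[a,f(a)])+\var(\log Y;[a,f(a)]).
\]
The same estimate as above (and its analogue at $1$) bounds this by $|\log Df(0)|+|\log Df(1)|+\var(\log Df)$. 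Now the key step you are missing: $M_f$, $Df(0)$, $Df(1)$ are all invariant under $C^{1+\mathrm{bv}}$ conjugacy, so the same inequality holds with $\var(\log D(hfh^{-1}))$ on the right for every conjugator $h$; taking the infimum replaces $\var(\log Df)$ by $\dist(f)$. In the parabolic case this gives $\var(\log DM_f)\le\dist(f)$ directly, hence $\dist(f)=0$ forces $DM_f$ constant, i.e.\ $M_f$ trivial. No summation over fundamental domains, no control of $C^1$-only straightenings across the whole interval, is needed.
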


As a direct consequence we obtain:

\begin{corintro} 
\label{c:vanish}
The identity is contained in the $C^{1+\mathrm{bv}}$ closure of the conjugacy class of a diffeomorphism 
$f \in \mathrm{Diff}_+^{2,\Delta} ([0,1])$ if and only if the endpoints are parabolic fixed points of $f$ and the Mather invariant of $f$ is trivial.
\end{corintro}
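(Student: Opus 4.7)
The plan is to deduce Corollary~\ref{c:vanish} at once from Theorem~\ref{t:vanish} by combining it with the $C^{1+\mathrm{bv}}$ characterization recalled in the introduction, namely, the result from \cite{mio2} which states that for any $f \in \mathrm{Diff}_+^{1+\mathrm{bv},\Delta}([0,1])$, the identity lies in the $C^{1+\mathrm{bv}}$-closure of the conjugacy class of $f$ if and only if $\dist(f) = 0$. Since $\mathrm{Diff}_+^{2,\Delta}([0,1]) \subset \mathrm{Diff}_+^{1+\mathrm{bv},\Delta}([0,1])$, this characterization applies to every $f$ considered here.

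For the ``only if'' direction, suppose that $\id$ is the $C^{1+\mathrm{bv}}$-limit of a sequence of conjugates $h_n f h_n^{-1}$. The cited result from \cite{mio2} gives $\dist(f) = 0$, and Theorem~\ref{t:vanish} then forces the endpoints of $f$ to be parabolic fixed points and $M_f$ to be trivial. For the ``if'' direction, assume the endpoints of $f$ are parabolic and $M_f$ is trivial. Theorem~\ref{t:vanish} yields $\dist(f) = 0$, and the result from \cite{mio2} then provides the desired sequence of $C^{1+\mathrm{bv}}$-conjugates of $f$ converging to $\id$ in the $C^{1+\mathrm{bv}}$ topology.

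No real obstacle is expected at this level: all the substantive work is packed into Theorem~\ref{t:vanish}, and the corollary is a formal two-line concatenation. The only point worth verifying is that the ``conjugacy class'' appearing in the statement and in \cite{mio2} refer to the same regularity of the conjugating diffeomorphisms; this is a matter of convention, and in any case both directions above are coherent if one takes the conjugating maps to be $C^{1+\mathrm{bv}}$, which is the weakest class compatible with the $C^{1+\mathrm{bv}}$ closure being considered.
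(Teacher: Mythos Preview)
Your proposal is correct and matches the paper's approach exactly: the paper presents Corollary~\ref{c:vanish} as ``a direct consequence'' of Theorem~\ref{t:vanish}, implicitly combined with the characterization from \cite{mio2} recalled in the introduction, which is precisely the two-step argument you spell out.
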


In fact, for an element of $\mathrm{Diff}_+^{2,\Delta} ([0,1])$, having a trivial Mather invariant  
 is equivalent to arising as the time-$1$ map of the flow of a $C^1$ vector field on $[0,1]$ (which is generically not the case 
 in $\mathrm{Diff}_+^{2,\Delta} ([0,1])$; see \cite{kopell}), and the parabolicity of the endpoints is then equivalent to the 
 $C^1$ flatness of the vector field at these points. Stated this way, the necessary and sufficient condition of 
 Theorem \ref{t:vanish} and Corollary \ref{c:vanish} above nicely generalizes to diffeomorphisms having fixed points in the interior:

\begin{corintro} 
\label{c:several}
The asymptotic distortion of a diffeomorphism $f \in \mathrm{Diff}_+^2 ([0,1])$ vanishes if and only if $f$ arises as the time-1 map of the flow of a $C^1$ vector field that is $C^1$ flat at each vanishing point.
\end{corintro}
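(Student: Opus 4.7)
The plan is to decompose $[0,1]$ along the fixed-point set of $f$, apply Theorem \ref{t:vanish} componentwise, and glue the resulting Szekeres vector fields. Set $F = \mathrm{Fix}(f)$ and write $[0,1] \setminus F = \bigsqcup_i (a_i, b_i)$; each $[a_i,b_i]$ is $f$-invariant with $f|_{[a_i,b_i]} \in \mathrm{Diff}_+^{2,\Delta}([a_i,b_i])$. A preliminary observation is that $\mathrm{dist}_\infty(f)=0$ forces every fixed point of $f$ to be parabolic: since $\log Df^n(p)=n\log Df(p)$ at $p\in F$, the variation of $\log Df^n$ is at least $n\vert \log Df(p_1)-\log Df(p_2)\vert$ for any $p_1,p_2\in F$, so all fixed points share a common derivative $\lambda$; the one-sided sign constraints $Df(a_i)\geq 1\geq Df(b_i)$ on a component where $f>\mathrm{id}$ (and the reverse on components where $f<\mathrm{id}$) then pin $\lambda=1$. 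Once all fixed points are parabolic, $\log Df^n$ is continuous and vanishes on $F$, whence the additivity
\[
\mathrm{var}_{[0,1]}(\log Df^n) \;=\; \sum_i \mathrm{var}_{[a_i,b_i]}(\log Df^n),
\]
and a Fatou-type argument yields $\mathrm{dist}_\infty(f|_{[a_i,b_i]})=0$ for every $i$. Theorem \ref{t:vanish} then provides on each $[a_i,b_i]$ a $C^1$ generating vector field $X_i$ that is $C^1$-flat at both endpoints, and I define $X$ on $[0,1]$ by $X|_{[a_i,b_i]}=X_i$ and $X\equiv 0$ on $F$.

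The step I expect to be the main obstacle is the verification that $X$ is $C^1$ and $C^1$-flat at every accumulation point of $F$. At interior points of any component and at isolated points of $F$, regularity is automatic from the $C^1$-flatness of the $X_i$'s at their endpoints. But at an accumulation point $p$ of $F$, one needs $\sup_{[a_i,b_i]}|X_i|\to 0$ and $\sup_{[a_i,b_i]}|X_i'|\to 0$ as $(a_i,b_i)$ shrinks to $p$. The identities
\[
f(x)-x \;=\; \int_0^1 X_i(\phi^{X_i}_t x)\, dt, \qquad \log Df(x) \;=\; \int_0^1 X_i'(\phi^{X_i}_t x)\, dt
\]
give integral versions of the desired bounds, and since $f$ is $C^2$ and $p$ is parabolic, both $\sup_{[a_i,b_i]}|f-\mathrm{id}|$ and $\sup_{[a_i,b_i]}|\log Df|$ tend to $0$. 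The transfer from these integral averages to pointwise control on $X_i$ and $X_i'$ would rely on the mean value theorem along flow trajectories, exploiting the fact that $X_i$ has constant sign on $(a_i,b_i)$ and that $\log Df$ is Lipschitz near $p$; this passage from integral to sup-norm control is where the bulk of the technical work lies.

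Conversely, if $f=\phi^X_1$ for some $C^1$ vector field $X$ that is $C^1$-flat at each of its zeros, then $\mathrm{Fix}(X)=\mathrm{Fix}(f)$ and each restriction $X|_{[a_i,b_i]}$ generates $f|_{[a_i,b_i]}$ with the required flatness at the endpoints, so Theorem \ref{t:vanish} gives $\mathrm{dist}_\infty(f|_{[a_i,b_i]})=0$ for every $i$. The additivity above, together with a uniform-tail estimate coming from the continuity of $X$ and $X'$ on the compact interval $[0,1]$, then yields $\mathrm{dist}_\infty(f)=0$.
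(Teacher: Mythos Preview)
Your approach is essentially the same as the paper's: localize the asymptotic distortion along the components of $[0,1]\setminus\mathrm{Fix}(f)$ (the paper packages this as Lemma~\ref{muchos}, so your preliminary parabolicity argument and variation additivity are absorbed there), apply Theorem~\ref{t:vanish} on each piece, and glue the resulting Szekeres vector fields. For the gluing step, which you correctly flag as the main technical obstacle, the paper does not give a direct argument either but simply cites \cite[chap.~IV]{yoccoz}.
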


This follows directly from Theorem \ref{t:vanish} together with a natural formula of localization of the asymptotic distortion along the (smallest) closed intervals that are fixed by the diffeomorphism (\cf Proposition \ref{muchos} in \S \ref{s:dist}), observing in 
addition that if $f$ is the time-$1$ map of a $C^1$ vector field on each of these intervals, these vector fields match up nicely as a $C^1$ vector field on $[0,1]$ (this can be deduced from \cite[chap. IV]{yoccoz}).

\medskip

Another consequence of Theorem \ref{t:vanish} is the answer below to Question 6 from \cite{mio2} (see \S \ref{s:mather} for the proof and a generalization to diffeomorphisms with interior fixed points). 

\begin{corintro} 
\label{c:simult}
If $f,g$ are commuting diffeomorphisms in $\mathrm{Diff}_+^{2,\Delta} ([0,1])$, 
then $\dist (f)$ and $\dist (g)$ either both vanish or are both strictly positive.
\end{corintro}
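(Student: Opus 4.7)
The strategy is to leverage Theorem~\ref{t:vanish} to recast the vanishing of the asymptotic distortion as a statement about vector fields, and then invoke the classical description of the centralizer of a $C^2$ diffeomorphism without interior fixed points in order to transport that property from $f$ to $g$. More precisely, Theorem~\ref{t:vanish} tells us that $\dist(f)=0$ is equivalent to $f$ being the time-$1$ map $Y^1$ of a $C^1$ vector field $Y$ on $[0,1]$ which vanishes at the endpoints together with its derivative (the flatness of $Y$ at $0$ and $1$ encoding parabolicity).

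Assume $\dist(f)=0$ and let $Y$ be as above, with flow $(Y^t)_{t\in\R}$. By the uniqueness part of the Szekeres theorem (\cf \cite{szekeres,sergeraert}), the continuous vector field on $(0,1)$ whose time-$1$ map is $f$ is unique, so $Y|_{(0,1)}$ is nothing but the Szekeres vector field of $f$. The classical centralizer result of Kopell--Szekeres (\cf \cite{kopell}) then asserts that any diffeomorphism in $\mathrm{Diff}_+^{1,\Delta}([0,1])$ that commutes with $f$ belongs to the $1$-parameter flow $(Y^t)_{t\in\R}$. Applied to $g$, this yields $g=Y^s$ for some $s\in\R$, with $s\neq 0$ since $g\neq\id$.

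Now $sY$ is again a $C^1$ vector field on $[0,1]$ which is $C^1$-flat at the endpoints, and $g=(sY)^1$, so a second application of Theorem~\ref{t:vanish} gives $\dist(g)=0$. Exchanging the roles of $f$ and $g$ (which the hypotheses allow) yields the converse implication, and consequently $\dist(f)=0$ if and only if $\dist(g)=0$, which is the claimed dichotomy.

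The main delicate step, as far as I can see, is the identification of the vector field $Y$ produced by Theorem~\ref{t:vanish} with the Szekeres vector field of $f$, and the precise invocation of the centralizer theorem; both use in an essential way the $C^2$ regularity and the absence of interior fixed points of $f$ and $g$. Once these classical ingredients are in place, the argument reduces to applying Theorem~\ref{t:vanish} twice.
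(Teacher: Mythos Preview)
Your argument is correct and follows essentially the same route as the paper's proof: use Theorem~\ref{t:vanish} to translate $\dist(f)=0$ into the existence of a single $C^1$ generating vector field that is $C^1$-flat at the endpoints, invoke Kopell's description of the $C^1$ centralizer to place $g$ in the flow of that vector field, and apply Theorem~\ref{t:vanish} again. One imprecision worth correcting: uniqueness of the generating vector field does \emph{not} hold among continuous vector fields on $(0,1)$ (there are many such), but rather among $C^1$ vector fields on $[0,1)$ (resp.\ $(0,1]$); what you need is that $Y|_{[0,1)}$ coincides with the left Szekeres vector field of $f$ (and similarly on the right), which then makes the centralizer theorem applicable.
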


The relation between the asymptotic distortion and the Mather invariant above can be made even more explicit, as we show in \S \ref{s:fund-ineq}.

\begin{thmintro}
\label{t:fund-ineq}
For every $f  \!\in\! \mathrm{Diff}_+^{2,\Delta} ([0,1])$, one has 
\begin{equation}\label{general-eq}
\big| \mathrm{dist}_{\infty} (f) -  \mathrm{var} ( \log  DM_f ) \big| \leq \big| \log Df(0) \big| + \big| \log Df(1) \big|.
\end{equation}
\end{thmintro}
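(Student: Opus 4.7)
The plan is to use the Szekeres $C^1$ conjugations to translate the problem into an analysis of a Birkhoff-type sum on $\R$, then split the total variation into a periodic contribution carrying the Mather invariant and a drift contribution carrying the endpoint derivatives.

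I would first set up the Szekeres framework. Assume without loss of generality that $f(x)<x$ on $(0,1)$; this forces $Df(0) \leq 1 \leq Df(1)$, so that $|\log Df(0)|+|\log Df(1)| = \log Df(1) - \log Df(0)$. By Szekeres--Sergeraert, the local $C^1$ linearizations near the two endpoints extend globally by the dynamics to $C^1$ diffeomorphisms $\phi_-,\phi_+:(0,1)\to\R$ satisfying $\phi_\pm\circ f = \phi_\pm - 1$. The Mather invariant $M_f$ is represented by the lift $u:=\phi_-\circ\phi_+^{-1}$, which commutes with integer translations, and the function $\Psi := \log D\phi_- - \log D\phi_+$ is $f$-invariant, hence $1$-periodic in either translated coordinate, with $\var(\log DM_f) = \var(\Psi;J)$ over any fundamental domain $J$.

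Applying the chain rule to $\phi_-\circ f^n = \phi_- - n$ gives the key identity $\log Df^n(x) = g(y) - g(y-n)$, where $y:=\phi_-(x)$ and $g(y):=\log D\phi_-(\phi_-^{-1}(y))$. Since $\phi_-$ is a homeomorphism $(0,1)\to\R$,
\[ \var(\log Df^n;[0,1]) = \var(g_n;\R), \qquad g_n := g - g(\cdot - n). \]
A direct computation shows $\psi(y):=g(y)-g(y-1) = \log Df(\phi_-^{-1}(y))$ is $C^1$ with $\psi(-\infty)=\log Df(0)$ and $\psi(+\infty)=\log Df(1)$, so $g$ has asymptotic discrete slopes $\log Df(0)$ at $-\infty$ and $\log Df(1)$ at $+\infty$. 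Setting $\tilde h := g - \Psi = \log D\phi_+\circ\phi_-^{-1}$, the $1$-periodicity of $\Psi$ in the $y$-coordinate yields the clean algebraic identity
\[ g(y) - \tilde h(y-n) = g_n(y) + \Psi(y). \]

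Localizing to the window $I=[-M,n+M]$ with $M$ fixed and $n\to\infty$, the reverse triangle inequality for variation applied to $g_n = [g(\cdot)-\tilde h(\cdot-n)] - \Psi$ gives
\[ \bigl|\var(g_n;I) - \var(\Psi;I)\bigr| \leq \var\bigl(g(\cdot) - \tilde h(\cdot - n);I\bigr), \]
where $\var(\Psi;I) = (n+2M)\var(\log DM_f)$ by periodicity. The tail contributions $\var(g_n;\R\setminus I)$ I would show are $o(n)$, exploiting that $\psi-\log Df(0)$ and $\psi-\log Df(1)$ have bounded variation near the respective infinities (in the parabolic case, this decay is only polynomial, yielding tails of size $O(\log n)$). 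The main obstacle is the upper bound $\var\bigl(g(\cdot)-\tilde h(\cdot-n);I\bigr) \leq n(\log Df(1)-\log Df(0)) + O_M(1)$: the function $y\mapsto g(y)-\tilde h(y-n)$ sits at a plateau $\approx n\log Df(0)$ for $y\lesssim 0$, where both $g(y)$ and the shifted $\tilde h(y-n)$ lie in their asymptotic slope-$\log Df(0)$ regime, and at a plateau $\approx n\log Df(1)$ for $y\gtrsim n$, transitioning between them over a window of length $\asymp n$. A careful analysis of this transition, exploiting that $g-\tilde h=\Psi$ is $C^1$ and $1$-periodic (so the ``non-$C^1$ parts'' of $g$ and $\tilde h$ match identically before the shift), should show that the transition is essentially monotone up to bounded oscillation, yielding the claimed estimate. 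Dividing by $n$ and letting first $n\to\infty$ and then $M\to\infty$ gives $|\dist(f) - \var(\log DM_f)| \leq \log Df(1)-\log Df(0) = |\log Df(0)|+|\log Df(1)|$, completing the proof.
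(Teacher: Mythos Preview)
Your framework is set up correctly: the identities $\log Df^n = g_n\circ\phi_-$, $g-\tilde h=\Psi$, and $F:=g(\cdot)-\tilde h(\cdot-n)=g_n+\Psi$ are all valid, and the reverse triangle inequality $|\var(g_n;I)-\var(\Psi;I)|\le\var(F;I)$ is fine. The gap is that the claimed bound $\var(F;I)\le n(|\log Df(0)|+|\log Df(1)|)+O_M(1)$ is \emph{false}, so the whole scheme collapses.

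To see this, take both endpoints parabolic (where you would need $\var(F;I)=o(n)$). Since $g'(y)=-DX(\phi_-^{-1}(y))$ with $X$ the \emph{left} Szekeres field, one has $g'(y)\to -DX(0)=0$ as $y\to-\infty$, hence $\tilde h'(z)=g'(z)-\Psi'(z)\to -\Psi'(z)$ as $z\to-\infty$. Symmetrically, $\tilde h'(y)=-(DY\cdot X/Y)(\phi_-^{-1}(y))\to 0$ as $y\to+\infty$ (since $Y$ is $C^1$ at $1$ and $DY(1)=0$), so $g'(y)-\Psi'(y)\to 0$. In the bulk of $I$ (say $M'\le y\le n-M'$ with $M'$ large), both asymptotics apply and
\[
F'(y)=g'(y)-\tilde h'(y-n)\approx \Psi'(y)-(-\Psi'(y-n))=2\Psi'(y),
\]
so $\tfrac1n\var(F;I)\to 2\,\var(\log DM_f)$, not $0$. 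Your ``plateau'' picture is therefore wrong: after the shift by $n$, the oscillatory parts of $g$ and $\tilde h$ (which are precisely $\Psi'$) no longer cancel but \emph{add}, so $F$ is far from monotone. The resulting inequality $|\dist(f)-\var(\log DM_f)|\le 2\var(\log DM_f)$ is trivial.

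The paper avoids this by never looking at a window of length $\sim n$. It either (i) works on a \emph{single} fundamental domain via the localization $\dist(f)=\lim_N\var(\log Df^{2N};[f^{-N}(p),f^{-N+1}(p)])$ and the identity $DM_f=(X/Y)\circ\psi_X$, or (ii) uses the coboundary characterization $\dist(f)=\inf_u\|D^2f/Df-((u\circ f)Df-u)\|_{L^1}$ with an explicit $u$ built from $D\log X$ on one side and $D\log Y$ on the other. In both cases the key estimate is $\big|\var(\log X;[a,f(a)])-|\log Df(0)|\big|\le\var(\log Df;[0,a])$ (and its analogue for $Y$ near $1$), which controls each Szekeres field only near its \emph{own} endpoint, where it is $C^1$. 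Your $F$, by contrast, forces you to control $\log X$ near $1$ and $\log Y$ near $0$, exactly where the Mather oscillation lives and cannot be absorbed.
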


In the statement above observe that, though the Mather invariant 
$M_f$ is defined only up to pre- and post-composition with rotations, the total variation of the logarithm of its derivative is well defined.

A direct consequence of Theorem \ref{t:fund-ineq} is that if $f  \!\in\! \mathrm{Diff}_+^{2,\Delta} ([0,1])$ is tangent to the identity at the endpoints, then
\begin{equation} \label{parabolic}
\mathrm{dist}_{\infty} (f) =  \mathrm{var} ( \log  DM_f ).
\end{equation}
Notice that this equality implies Theorem \ref{t:vanish}. More interestingly, it implies that $\dist$ is surjective as a function into the non-negative real numbers. Indeed, Yoccoz has shown in \cite{yoccoz} that the Mather invariant is surjective as a function 
from $\mathrm{Diff}_+^{2,\Delta} ([0,1])$ into the space of $C^2$ circle diffeomorphisms modulo rotations. We give a proof ``by hand'' of this fact in Remark \ref{r:mather-local-2}.

Another consequence of Theorem \ref{t:fund-ineq} is the continuity of the asymptotic distortion as a function defined from $\mathrm{Diff}_+^{2,\Delta} ([0,1])$ into the real numbers at each $f$ that is tangent to the identity at the endpoints. Indeed, as we already mentioned, it was also proved by Yoccoz in \cite{yoccoz} that the map sending $f \!\in\! \mathrm{Diff}_+^{2,\Delta} ([0,1])$ to its Mather invariant is continuous for the $C^2$ topology. Moreover, both $Df(0)$ and $Df(1)$ depend continuously on $f$. Therefore, if $f_n \in \mathrm{Diff}_+^{2,\Delta} ([0,1])$, with Mather invariant $M_{f_n}$, converges to a certain $f \in \mathrm{Diff}_+^{2,\Delta} ([0,1])$ that is tangent to the identity at the endpoints, then inequality (\ref{general-eq}) yields
$$\lim_{n \to \infty} \dist (f_n) = \lim_{n\to \infty} \var( \log DM_{f_n} ) = \var ( \log DM_f ) = \dist (f),$$
which shows the continuity of $\dist$ at $f$.\medskip

The argument above requires $C^2$ differentiability and does not apply to diffeomorphisms with hyperbolic fixed points. In \S \ref{s:continu}, we develop a more direct argument that yields the first part of Theorem \ref{t:continu} below, which answers a question from \cite{mio2}. This argument is based on Proposition \ref{cor-dist-loc}, to which \S \ref{s:local} is devoted, concerning the approximation of $\dist$ by the localization of the distortion of a finite iterate of the given diffeomorphism. This new description of the asymptotic distortion is also used at the end of \S \ref{s:local} to give an alternative proof of Theorem \ref{t:fund-ineq}.

\begin{thmintro}
\label{t:continu}
The asymptotic distortion is continuous as a function from $\mathrm{Diff}_+^{1+\mathrm{bv},\Delta} ([0,1])$ into the nonnegative real numbers. 
However, it is not continuous on the space $\mathrm{Diff}_+^{1+\mathrm{bv}} ([0,1])$. Actually, it is not continuous on $\mathrm{Diff}^{\infty}_+([0,1])$.
\end{thmintro}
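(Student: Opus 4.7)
The plan is to treat the two assertions separately. For the first part (continuity on $\mathrm{Diff}_+^{1+\mathrm{bv},\Delta}([0,1])$), I would first observe that by sub-additivity of $n\mapsto \var(\log Df^n)$ (following from the chain rule together with concatenation of variations), Fekete's lemma yields $\dist(f)=\inf_n \tfrac{1}{n}\var(\log Df^n)$. Since for every fixed $n$ the function $f\mapsto \tfrac{1}{n}\var(\log Df^n)$ is continuous in the $C^{1+\mathrm{bv}}$ topology, this representation immediately shows that $\dist$ is upper semi-continuous on the whole space $\mathrm{Diff}_+^{1+\mathrm{bv}}([0,1])$. For the matching lower semi-continuity at $f\in \Delta$, I would invoke Proposition~\ref{cor-dist-loc}: for every $\epsilon>0$ it provides an integer $N$ and a localization (finite union of fundamental domains) $U$ such that $\tfrac{1}{N}\var_U(\log Df^N)\geq \dist(f)-\epsilon$, and moreover such that the same localized quantity is a lower bound for the asymptotic distortion of any map close enough to $f$ in $\mathrm{Diff}_+^{1+\mathrm{bv},\Delta}([0,1])$ (the localization being stable under small perturbations because $\Delta$ is open and fundamental domains depend continuously on the map). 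Passing to the limit $f_n\to f$ and then $\epsilon\to 0$ gives $\liminf_n \dist(f_n)\geq \dist(f)$, completing continuity at $f$.

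For the discontinuity I would construct an explicit sequence $f_n\to f$ in $C^\infty$ with $\lim_n \dist(f_n)<\dist(f)$ (since upper semi-continuity holds on the full space, only lower semi-continuity can fail). The construction exploits the additivity of $\dist$ across fixed intervals (Proposition~\ref{muchos}) together with the surjectivity of the Mather invariant (Remark~\ref{r:mather-local-2}). Pick a $C^\infty$ circle diffeomorphism $\phi$ with $\var(\log D\phi)>0$ and construct $g,\tilde g\in \mathrm{Diff}_+^{\infty,\Delta}([0,1])$, flat-tangent to the identity at both endpoints, with Mather invariants $\phi$ and $\phi^{-1}$ respectively; equation~\eqref{parabolic} then gives $\dist(g)=\dist(\tilde g)=\var(\log D\phi)$. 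Gluing a scaled copy of $g$ on $[0,1/2]$ with a scaled copy of $\tilde g$ on $[1/2,1]$ at the parabolic fixed point $1/2$ (flatness making this $C^\infty$) produces an $f\in \mathrm{Diff}_+^\infty([0,1])$ with $\dist(f)=2\var(\log D\phi)>0$. The perturbations are $f_n:=f+\epsilon_n\psi$ for a flat $C^\infty$ bump $\psi\geq 0$ with $\psi(1/2)>0$ and a suitably chosen sequence $\epsilon_n\downarrow 0$; for small $\epsilon_n>0$ one gets $f_n\in \mathrm{Diff}_+^{\infty,\Delta}([0,1])$ and $f_n\to f$ in $C^\infty$.

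The hard part will be controlling $\dist(f_n)$ along a subsequence. Heuristically, as the fixed point at $1/2$ disappears, the first-return map giving $M_{f_n}$ composes the two former sides, producing something of the form $\phi^{-1}\circ R_{t_n}\circ \phi$ modulo rotations, where $R_{t_n}$ is a rotation encoding the passage through the bottleneck at $1/2$ in Sternberg coordinates. A direct calculation of derivatives gives
\[
\var\bigl(\log D(\phi^{-1}\circ R_{t_n}\circ \phi)\bigr) \;=\; \var\bigl( h - h\circ T_{t_n} \bigr), \qquad h := \log D\phi\circ \phi^{-1},
\]
which tends to $0$ as $t_n\to 0$ since $h\in C^\infty(\mathrm{S}^1)$. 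By continuously varying $\epsilon_n$ the parameter $t_n$ sweeps the circle, so one can extract a subsequence along which $t_n\to 0$. Along it, equation~\eqref{parabolic} applied to $f_n$ (still parabolic at $0$ and $1$) yields $\dist(f_n)\to 0<2\var(\log D\phi)=\dist(f)$, proving the discontinuity on $\mathrm{Diff}_+^\infty([0,1])$ and \emph{a fortiori} on $\mathrm{Diff}_+^{1+\mathrm{bv}}([0,1])$.
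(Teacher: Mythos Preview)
Your treatment of continuity is essentially the paper's approach: upper semicontinuity from the subadditive representation $\dist(f)=\inf_n \tfrac{1}{n}\var(\log Df^n)$, and lower semicontinuity at points of $\mathrm{Diff}_+^{1+\mathrm{bv},\Delta}([0,1])$ via the localization of Proposition~\ref{cor-dist-loc}. Your invocation of that proposition is a bit loose---the paper actually uses the more quantitative Lemma~\ref{localization}, which bounds the discrepancy between $\tfrac{1}{n}\var(\log Df^n)$ and the localized quantity $\var(\log Df^{2N};[f^{-N}(p),f^{-N+1}(p)])$ by variations of $\log Df$ on shrinking neighborhoods of the endpoints, and these bounds are what make the estimate stable under small $C^{1+\mathrm{bv}}$ perturbations---but the idea is right and the details are routine.

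The discontinuity construction, however, has a genuine gap. Your central claim is that $M_{f_n}$ is ``of the form $\phi^{-1}\circ R_{t_n}\circ\phi$ modulo rotations,'' but you offer only a heuristic, and it is not clear this is even true. The issue is that the left Szekeres field $X_{f_n}$, restricted to the right half $(1/2+\delta,1)$, is \emph{some} $\tilde g$-invariant $C^1$ vector field obtained by pushing $X_1$ forward through the bottleneck, but there is no reason it should be (close to) a scalar multiple of $X_2$: the space of $\tilde g$-invariant $C^1$ vector fields on the \emph{open} interval $(1/2,1)$ is infinite-dimensional, and which one you land on depends on the full derivative data of $f_n$ along the transit, not just a single phase. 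Your appeal to ``Sternberg coordinates'' does not help either, since you made $1/2$ a \emph{parabolic} (indeed flat) fixed point, so no linearization is available there. Without a rigorous identification of $M_{f_n}$, the subsequence extraction and the conclusion $\dist(f_n)\to 0$ are unsupported.

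The paper sidesteps this computation by a different construction. It takes a one-parameter family of vector fields $Z_s$ interpolating between a nowhere-vanishing field $Z_0$ and a field $Z_1$ with an extra zero at $1/2$, selects parameters $s_n\to 1$ so that two reference points $p$ and $q$ (on either side of the future fixed point) lie on the same orbit of the time-$1$ map $g_n$ of $Z_{s_n}$, and then perturbs each $g_n$ by a fixed bump $\phi$ on a fundamental domain near $p$ and by the conjugate bump $\psi^{-1}=(g^{k-1}\phi g^{-(k-1)})^{-1}$ on one near $q$. These two perturbations are designed to cancel: the resulting $f_n$ is \emph{explicitly} $C^\infty$-conjugate to $g_n$ via the diffeomorphism that propagates $\phi$ along the $g_n$-orbit from $p$ to $q$, so $f_n$ is a time-$1$ flow map, has trivial Mather invariant, and $\dist(f_n)=0$---no computation of $M_{f_n}$ is required. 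In the limit $f_\infty$, the orbit connection between $p$ and $q$ is broken by the new fixed point at $1/2$, the two bumps no longer undo each other, and Lemma~\ref{break} shows that each half of $f_\infty$ has a nontrivial Mather invariant, whence $\dist(f_\infty)>0$ by Theorem~\ref{t:vanish} and Lemma~\ref{muchos}.
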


It will arise along the proof that $\dist$ is discontinuous in a very strong sense. Indeed, there exists a $C^{\infty}$ diffeomorphism $f$ of $[0,1]$ with a single fixed point in the interior as well as a sequence of pairwise $C^{\infty}$ conjugate maps $f_n \!\in\! \mathrm{Diff}_+^{\infty,\Delta} ([0,1])$ that converge in the $C^{\infty}$ topology to $f$ and such that $\, \dist (f_n) = 0 \,$ for all $n$, but $\, \dist (f) > 0$. It is worth stressing that, though $\,\dist\,$ is not continuous on the whole space $\mathrm{Diff}_+^{1+\mathrm{bv}} ([0,1])$, elementary arguments show that it is {\em upper semicontinuous}. In \S \ref{s:continu}, we provide two proofs of this fact: a direct one that only uses the definition of $\dist$, and an alternative one based on considerations related to conjugate diffeomorphisms. 

Since the asymptotic distortion measures the variation of the first derivative, it may be thought of as a kind of Lyapunov exponent at the level of the second derivative. In this view, Theorem \ref{t:continu} should be compared with several results concerning (dis)continuity of Lyapunov exponents in many different settings. (A nice panorama for this important topic is \cite{viana}.) Notice that,  as it is the case of $\dist$, Lyapunov exponents always vary upper semi-continuously. In the task of still pursuing the analogy above, it would be nice to look for examples of discontinuity of Lyapunov exponents along the closure of single conjugacy classes of linear cocycles. Here is a concrete question in this regard.

\begin{qsintro} 
Let $R$ denote a circle rotation by an irrational angle. Does there exists a continuous cocycle 
$A: \clo \to \mathrm{PSL}(2,\mathbb{R})$ with vanishing Lyapunov exponents and a sequence 
of continuous maps $B_n: \clo \to \mathrm{PSL}(2,\mathbb{R})$ such that the conjugate cocycles
$$A_n (\theta) := B_n (R(\theta)) \, A (\theta) \, B_n (\theta)^{-1}$$
converge in $C^0$ topology to a cocycle $A_{\infty}$ having nonzero Lyapunov exponents?
\end{qsintro} 

In \S \ref{s:invariant}, we use Proposition \ref{cor-dist-loc} again to yield a somewhat 
surprising result, namely, the invariance of $\dist$ under $C^1$ conjugacy.

\begin{thmintro}
\label{t:invariant}
 If $f,g$ are $C^2$ diffeomorphisms of a compact 1-manifold for which there exists a $C^1$ diffeomorphism $h$ such that $f = h g h^{-1}$, then $\dist (f) = \dist (g)$.
\end{thmintro}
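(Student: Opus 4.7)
The plan is to invoke Proposition \ref{cor-dist-loc}, which expresses $\dist(f)$ as a limit of localized variations of $\log Df^N$ on short pieces of a partition. The point is that, although the logarithmic derivative $\log Dh$ of a mere $C^1$ conjugacy $h$ need not be of bounded variation, its \emph{uniform continuity} alone will suffice to make its contribution negligible after normalization by $N$.

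After restricting to each component of the complement of the fixed-point set (which a $C^1$ conjugacy permutes) and invoking the localization formula of Proposition \ref{muchos}, one may assume $f, g \in \mathrm{Diff}_+^{2,\Delta}([0,1])$ with $f = h g h^{-1}$. Setting $\phi := h^{-1}$ and $u := \log Dh$, and using $\phi \circ f^N = g^N \circ \phi$, one has
\[
\log Df^N(x) \;=\; u\bigl(g^N \phi(x)\bigr) \,+\, \log Dg^N\bigl(\phi(x)\bigr) \,-\, u\bigl(\phi(x)\bigr).
\]
For any interval $I \subset [0,1]$, since $\phi$ and $g^N \circ \phi$ are monotone,
\[
\var_I(\log Df^N) \;\le\; \var_{\phi(f^N(I))}(u) \,+\, \var_{\phi(I)}(\log Dg^N) \,+\, \var_{\phi(I)}(u),
\]
and the same inequality holds with $\var$ replaced by the oscillation.

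I would then apply Proposition \ref{cor-dist-loc} to compute $\dist(f)$ through a partition $\mathcal{P}_N = \{I_j^N\}$ of $[0,1]$ whose pieces \emph{and} their $f^N$-images have diameters $o(1)$ as $N \to \infty$, while the number of pieces grows at most linearly in $N$. Summing the above inequality over $\mathcal{P}_N$: the middle term contributes at most $\var(\log Dg^N) \approx N \dist(g)$, while, letting $\omega$ be a modulus of uniform continuity for $u$, the two $u$-terms together contribute at most $O(N) \cdot \omega(o(1)) = o(N)$. Dividing by $N$ and letting $N \to \infty$ yields $\dist(f) \le \dist(g)$; the reverse inequality follows by conjugating $f$ by $h^{-1}$.

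The main obstacle will be reconciling the precise form of Proposition \ref{cor-dist-loc} with an oscillation-type bound for $u$: the localized quantity computing $\dist$ must be amenable to being controlled in terms of the oscillation, rather than the full variation, of $u$ on the pieces, so that \emph{continuity} of $u$, not bounded variation, is what one actually exploits. Granting this technical point, the argument is essentially algebraic---and moreover, the same scheme shows that any dynamical quantity defined as a normalized variation with such a short-scale localization will automatically be invariant under $C^1$ conjugacy.
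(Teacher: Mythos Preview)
Your proposal has a genuine gap, and it is exactly the ``technical point'' you try to grant. The problem is that a small \emph{oscillation} of $E$ on an interval says nothing about $|\var(F)-\var(G)|$ when $F=G+E$: take $G\equiv 0$ and $E(x)=\varepsilon\sin(nx)$ with $n$ large. Thus, when you apply Proposition~\ref{cor-dist-loc} (which gives $\dist(f)=\lim_N \var(\log Df^{2N};I_N)$ on a \emph{single} fundamental interval $I_N$), the decomposition $\log Df^{2N}=u\circ g^{2N}\phi+\log Dg^{2N}\circ\phi-u\circ\phi$ together with the mere continuity of $u=\log Dh$ yields no control on $|\var_{I_N}(\log Df^{2N})-\var_{\phi(I_N)}(\log Dg^{2N})|$. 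Your attempted fix via a partition $\{I_j\}$ hits the same wall: you must bound $\sum_j \var_{I_j}(\log Df^N)$, and on each $I_j$ the $u$-terms contribute their \emph{variation}, not their oscillation; there is no reason this variation is comparable to $\omega(|I_j|)$. (Incidentally, such a partition into $O(N)$ pieces with pieces and $f^N$-images of length $O(1/N)$ does exist, via the graph of $f^N$ having Euclidean length $\le 2$ --- but this is moot given the above.)

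The paper proceeds very differently, and the idea you are missing is structural: since $h$ conjugates $f$ to $g$, it must carry the Szekeres vector field $X_f$ to $X_g$, whence $Dh = (X_g\circ h)/X_f$ on $(0,1)$, so $h$ is automatically $C^2$ on the open interval. Applying Proposition~\ref{cor-dist-loc} to both $f$ and $g$ (at the base points $p$ and $h^{-1}(p)$), the difference of the localized variations is then bounded by the honest \emph{variation} of $\log Dh$ on the two fundamental intervals $[f^{\pm N}(p),f^{\pm N+1}(p)]$. The explicit formula
\[
\frac{D^2h}{Dh}=\frac{DX_g\circ h - DX_f}{X_f}
\]
together with $DX_f(0)=\log Df(0)=\log Dg(0)=DX_g(0)$ (the conjugacy being $C^1$) and $\int 1/|X_f|=1$ on fundamental intervals shows these variations tend to $0$. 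Thus the crucial ingredient is not continuity of $\log Dh$, but the interior $C^2$ regularity forced by Szekeres theory. Your concluding remark (``any localized normalized variation is $C^1$-invariant'') is therefore not supported by the argument and should be withdrawn; compare Remark~\ref{rem-invariance-dist} for the special cases where more direct arguments exist.
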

  
In this work, most of the results concern diffeomorphisms that are $C^2$, and not only $C^{1+\mathrm{bv}}$, yet the asymptotic distortion is naturally defined for the latter class of differentiability. We will deal with this technical issue in the separate work \cite{article2}, where -- among other things -- we will extend the Mather invariant to $C^{1+\mathrm{bv}}$ diffeomorphisms and prove generalized versions of Theorems \ref{t:vanish}, \ref{t:fund-ineq} and \ref{t:invariant}. However, we emphasize that a conjugacy problem 
analog to that of Question \ref{q:conj-cercle} remains widely open for $C^2$ diffeomorphisms of the interval. We explicitly record the precise question in this regard.

\begin{qsintro} 
\label{q:rapproch-C2}
Does there exist $f \in \mathrm{Diff}_+^{2,\Delta} ([0,1])$ with parabolic fixed points 
and trivial Mather invariant whose conjugacy class does not contain the identity in its $C^2$ closure?
\end{qsintro}

In \S \ref{s:reste}, we provide partial answers to this question. In particular, we show:

\begin{thmintro}
\label{t:rapproch-C2}
Let $f \in \Diff^{2,\Delta}_{+}([0,1])$ have parabolic fixed points. If the $C^2$ centralizer of $f$ is not infinite cyclic, then the $C^2$ conjugacy class of $f$ contains the identity in its $C^2$ closure.
\end{thmintro}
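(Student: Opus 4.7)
\emph{Step 1 (centralizer analysis).} Since $f$ has no fixed point in $(0,1)$, Szekeres' theorem yields a unique $C^1$ flow $(\phi^t)_{t\in\mathbb{R}}$ on $[0,1]$ with $f=\phi^1$, and Kopell's lemma identifies the $C^2$ centralizer of $f$ with the additive subgroup $D := \{t\in\mathbb{R} : \phi^t\in\Diff^{2,\Delta}_+([0,1])\}$ of $\mathbb{R}$, which contains $1$. The hypothesis that this centralizer is not infinite cyclic is precisely the statement that $D$ is dense in $\mathbb{R}$; in particular, fix an irrational $\alpha\in D$ and set $g := \phi^\alpha$.

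\emph{Step 2 (triviality of the Mather invariant).} In the Szekeres normalizations near the endpoints used to define $M_f$ in \S\ref{s:mather}, every $\phi^t$ acts as translation by $t$, and so descends to the rotation $R_t$ of angle $t$ on the Mather circle. The commutation $fg=gf$ then implies that $M_f$, viewed in $\dos/\mathrm{rotations}$, commutes with the irrational rotation $R_\alpha$. Since $R_\alpha$ admits a unique invariant probability measure, this measure must be preserved by the $C^2$ diffeomorphism $M_f$; hence $M_f$ is itself a rotation, \emph{i.e.}, trivial. Combined with the parabolicity of the endpoints, Theorem \ref{t:vanish} then gives $\dist(f)=0$, and $f$ is the time-$1$ map of a $C^1$ vector field $X$ on $[0,1]$ that is $C^1$-flat at the endpoints.

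\emph{Step 3 (upgrading to $C^2$ closure).} Using the density of $D$, pick $t_n\in D$ with $t_n\to 0$. The strategy is to produce $h_n\in\Diff^{2,\Delta}_+([0,1])$ such that $h_n f h_n^{-1}=\phi^{t_n}$, so that $h_n f h_n^{-1}\to\id$ in $C^2$ as soon as $\phi^{t_n}\to\id$ in $C^2$ along $t_n\in D$. In the flow coordinate $u$ of $X$ (in which $f$ reads as $u\mapsto u+1$), the intertwining $h_n f h_n^{-1}=\phi^{t_n}$ amounts to taking $h_n$ to be the dilation $u\mapsto t_n u$ in these coordinates, transported back to $[0,1]$ and extended suitably near the parabolic fixed points so as to define a global $C^2$ diffeomorphism.

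The main obstacle is twofold: (i) realizing the flow-coordinate dilation by $t_n$ as a genuine $C^2$ diffeomorphism of $[0,1]$, despite the degeneracy of the flow coordinate at the parabolic endpoints; and (ii) establishing $\phi^{t_n}\to\id$ in the $C^2$ topology along $t_n\in D$. Both rest on a uniform $C^2$-control of the family $\{\phi^t : t\in D,\ |t|\leq 1\}$. This is where the density of $D$ enters a second, essential time: the group law $\phi^{s+t}=\phi^s\circ\phi^t$ applied to densely many $C^2$ times $s,t\in D$, together with Kopell-type equicontinuity estimates for commuting $C^2$ diffeomorphisms, should furnish the required bounds and conclude.
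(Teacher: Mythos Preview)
Your Steps 1--2 are correct and well stated: the centralizer hypothesis does give a dense subgroup $D\subset\mathbb{R}$ of $C^2$ times for the Szekeres flow, and your argument for the triviality of $M_f$ via commutation with an irrational rotation is clean (the paper obtains this triviality slightly differently, but the conclusion is the same).

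The genuine gap is in Step 3. You correctly identify obstacles (i) and (ii), but neither is resolved, and both are real:

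\emph{Obstacle (i).} The flow-coordinate dilation $h_n=\psi^{-1}\circ(t_n\cdot)\circ\psi$ satisfies $Dh_n(x)=t_n\,X(h_n(x))/X(x)$ on $(0,1)$; its regularity at the endpoints is governed by that of $X$ there, and $X$ is in general only $C^1$ on $[0,1]$ (this is exactly the Sergeraert phenomenon). ``Extending suitably near the endpoints'' destroys the identity $h_n f h_n^{-1}=\phi^{t_n}$ there, so you would then need separate $C^2$-control of the conjugate near the endpoints, which you do not provide.

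\emph{Obstacle (ii).} Even granting a $C^2$ conjugator, the convergence $\phi^{t_n}\to\id$ in $C^2$ along $t_n\in D$, $t_n\to 0$, is not a formal consequence of the group law. Writing $t_n=p_n+q_n\alpha$ with $|p_n|+|q_n|\to\infty$ and using subadditivity of $\var(\log D(\cdot))$ gives bounds that blow up, not ones that vanish; ``Kopell-type equicontinuity'' does not supply the missing $C^2$ control.

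The paper's proof avoids both obstacles by a different two-stage scheme. First (Proposition~\ref{p:C1toC2}), it does \emph{not} try to reach $\phi^{t_n}$; instead it builds explicit $C^2$ conjugators $h_n$ via geometric means of $Df^{i\tau_n}$ (for $\tau_n\in D$, $\tau_n\to 0$) and proves directly that $(h_n)_*X$ $C^1$-converges to a genuine \emph{$C^2$} vector field $Y$, with $h_n f h_n^{-1}$ converging in $C^2$ to the time-$1$ map of $Y$. Second (Proposition~\ref{t:champ-C2}), once reduced to the time-$1$ map of a $C^2$ vector field with parabolic endpoints, an independent ``zoom-in'' construction near the endpoints gives $C^2$-small vector fields whose time-$1$ maps are $C^2$-conjugate to it. An $\varepsilon/2$ argument then combines the two steps. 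Your flow-dilation idea is natural, but making it rigorous would essentially require re-deriving these two propositions.
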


As we will recall in \S \ref{s:reste}, the centralizer hypothesis easily implies the triviality of the Mather invariant, but is strictly stronger. More precisely, as we already mentioned, 
having a trivial Mather invariant is equivalent to belonging to the flow of a $C^1$ vector field (a {\em $C^1$ flow}, for short), whose elements then constitute the $C^1$ centralizer of $f$ (\cf \S \ref{s:mather}). However, this flow may be not $C^2$ even if $f$ 
is, and Sergeraert's work \cite{sergeraert} provides an explicit example where the only $C^2$ elements of the flow/centralizer are the (forward and backward) iterates of $f$, and thus for which the \emph{hypothesis} of Theorem \ref{t:rapproch-C2} is not satisfied.  
Despite this, it is worth stressing that, for this particular example, it can be shown by hand that the \emph{conclusion} of  Theorem \ref{t:rapproch-C2} holds. Thus, one must still look elsewhere for a candidate to a negative answer to Question \ref{q:rapproch-C2}.

Under the hypothesis on the centralizer of Theorem \ref{t:rapproch-C2}, another fact holds: one can construct $C^2$ conjugators which bring the $C^1$ generating vector field closer and closer (in the $C^1$ topology) to a $C^2$ vector field, while the convergence for the $C^2$ elements of the initial flow happens in the $C^2$ topology (\cf Proposition \ref{p:C1toC2}). The proof of Theorem \ref{t:rapproch-C2} hence requires explicitly solving the ``$C^2$ conjugacy problem'' for the diffeomorphisms that arise along a $C^2$ flow, and this is done by hand in Proposition \ref{t:champ-C2}. The proof of Proposition \ref{p:C1toC2} is partly inspired from those of previous results on the $C^1$ conjugacy problem for centralizers and flows  which appear at the beginning of \S \ref{s:reste}.

\medskip

Let us also stress that our results only concern the approximation of the identity by conjugates of a given diffeomorphism, and not the approximation of a general diffeomorphism. (A similar remark applies to vector fields.) In concrete terms, we may state the following question:

\begin{qsintro}
\label{q:closure}
 Do there exist elements $f,g$ in $\mathrm{Diff}_+^{1+\mathrm{bv},\Delta} ([0,1])$, or even $\mathrm{Diff}_+^{\infty,\Delta} ([0,1])$, with parabolic fixed points and trivial Mather invariant, such that the $C^{1+\mathrm{bv}}$ closures of their conjugacy classes are different~?
\end{qsintro}

We close this work with a somewhat surprising group-theoretical application. 
Recall that a non-torsion element $f$ of a finitely generated group is said to be {\em distorted} if the word-length of $f^n$ grows sublinearly in $n$. 
(This does not depend on the choice of the finite generating system with respect to which word-lengths are computed.) More generally, an element 
of a general group is said to be a {\em distorted element} if it is distorted inside some finitely generated subgroup of the given group. This notion was 
introduced by Gromov \cite{gromov} and has been recently studied in many different contexts; see for instance \cite{cantat,kra}.

If a $C^1$ diffeomorphism $f$ of $[0,1]$ is a distorted element of $\mathrm{Diff}_+^1 ([0,1])$, then both endpoints of $[0,1]$ must be parabolic fixed 
points of $f$. Indeed, if one of these points were hyperbolic, then the growth of the logarithm of the derivative of the iterates of $f$ at this point would 
be linear; therefore, the word-length of $f^n$ could not grow sublinearly for any finite generating system of a group of $C^1$ diffeomorphisms containing 
$f$. Now, for a $C^2$ diffeomorphism, being a distorted element in $\mathrm{Diff}^{1+\mathrm{bv}}_+ ([0,1])$ requires having a vanishing asymptotic 
distortion (which again implies the parabolicity of the endpoints). This immediately follows from the {\em subadditivity property} of $\var (\log D (\cdot))$, 
namely
\begin{equation}\label{sub-prop}
\var (\log D (gh) ) \leq \var( \log Dg) + \var (\log Dh).
\end{equation}
Thus, Theorem \ref{t:vanish} and Corollary \ref{c:several} yield the following unexpected consequence:

\vspace{0.2cm}

\begin{corintro} 
\label{c:distort}
If $f \!\in\! \mathrm{Diff}^{2,\Delta}_+ ([0,1])$ has a nontrivial Mather invariant, then it cannot be a distorted element of 
$\mathrm{Diff}^{1+\mathrm{bv}}_+ ([0,1])$. More generally, the same holds if $f \!\in\! \mathrm{Diff}^{2}_+ ([0,1])$ 
is not the time-$1$ map of the flow of a $C^1$ vector field.
\end{corintro}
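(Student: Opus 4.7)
The plan is to derive \emph{both} statements as direct consequences of the subadditivity inequality (\ref{sub-prop}) combined with Theorem \ref{t:vanish} (for the first assertion) and Corollary \ref{c:several} (for the second). The key intermediate step, which should be stated explicitly, is that \emph{every distorted element of} $\mathrm{Diff}^{1+\mathrm{bv}}_+([0,1])$ \emph{has vanishing asymptotic distortion}; the two assertions then follow by contraposition.

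To establish this intermediate step, suppose $f$ is distorted, so that there exist $g_1, \ldots, g_k \in \mathrm{Diff}^{1+\mathrm{bv}}_+([0,1])$ generating a subgroup $G$ containing $f$ in which the word-length $\ell(f^n)$ with respect to $\{g_1^{\pm 1}, \ldots, g_k^{\pm 1}\}$ satisfies $\ell(f^n) = o(n)$. Set
$$C \,:=\, \max_{1 \leq i \leq k} \, \max \bigl\{ \var(\log Dg_i), \, \var(\log Dg_i^{-1}) \bigr\},$$
which is finite since each generator belongs to $\mathrm{Diff}^{1+\mathrm{bv}}_+([0,1])$ (the fact that $\var(\log Dg_i^{-1})$ is finite follows from the identity $\log Dg_i^{-1} = -(\log Dg_i) \circ g_i^{-1}$ together with the invariance of total variation under composition with a homeomorphism). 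Writing $f^n$ as a product of at most $\ell(f^n)$ generators and iterating (\ref{sub-prop}), one gets $\var(\log Df^n) \leq C \cdot \ell(f^n)$. Dividing by $n$ and letting $n \to \infty$ gives $\dist(f) = 0$.

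With this in hand, the first statement is immediate: if $f \in \mathrm{Diff}^{2,\Delta}_+([0,1])$ has nontrivial Mather invariant, Theorem \ref{t:vanish} forces $\dist(f) > 0$, ruling out distortion in $\mathrm{Diff}^{1+\mathrm{bv}}_+([0,1])$. For the more general statement, if $f \in \mathrm{Diff}^2_+([0,1])$ is not the time-$1$ map of a $C^1$ vector field that is $C^1$ flat at its zeros, Corollary \ref{c:several} again yields $\dist(f) > 0$, precluding distortion by the same argument.

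There is essentially no substantive obstacle to overcome, as the result reduces to concatenating previous theorems via the subadditivity of $\var(\log D(\cdot))$. The only mildly delicate point is handling inverses of generators in the word-length computation, which is dispatched by the standard change-of-variable formula for total variation recalled above.
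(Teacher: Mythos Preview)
Your proof is correct and follows essentially the same route as the paper: the paper derives Corollary~\ref{c:distort} in the Introduction precisely by noting that distortion in $\mathrm{Diff}^{1+\mathrm{bv}}_+([0,1])$ forces $\dist(f)=0$ via the subadditivity inequality~(\ref{sub-prop}), and then invoking Theorem~\ref{t:vanish} and Corollary~\ref{c:several}. Your write-up merely fills in the details of the word-length estimate that the paper leaves implicit.
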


Relevant examples of distorted circle diffeomorphisms are parabolic M\"obius transformations~$f$. Indeed, these are conjugated (via appropriate hyperbolic M\"obius maps $h$) to integer powers of themselves, which allows writing $f^n$ as a product of \, $\sim \! \log (n)$ \, factors among $h^{\pm 1}$ and~$f$. Notice that both $f$ and $h$ may be thought of as diffeomorphisms of the interval by ``opening the circle'' at a common fixed point. This gives examples of distorted elements in $\mathrm{Diff}^2_+ ([0,1])$ (actually, in $\mathrm{Diff}^{\infty}_+([0,1])$). We do not know, however, to what extent the converse of Corollary \ref{c:distort} holds. This seems to be a rather technical 
problem.\footnote{Afther this work, relevant progress in this direction has been achieved first in \cite{Na-dist} and latter in \cite{DE}.}
As a matter of example, remind Sergeraert's example of a $C^{\infty}$ diffeomorphism with trivial Mather invariant and generating vector field no more regular than $C^1$ quoted above. The answer to the following question is unclear to us:

\begin{qsintro}
\label{q:dist}
Is Sergeraert's diffeomorphism undistorted in $\mathrm{Diff}^2_+ ([0,1])$ ? 
\end{qsintro}

It is worth stressing that no analog of Corollary \ref{c:distort} can hold for circle diffeomorphisms. Indeed, there exist $C^{\infty}$ circle diffeomorphisms of irrational rotation number that are distorted yet they do not arise as the time-1 map of a $C^1$ vector field. To see this, notice that a standard Baire's category argument shows that a generic circle diffeomorphism $f$ of irrational rotation number is {\em recurrent}, that is, there exists an increasing sequence of integer numbers $n_k$ such that $f^{n_k}$ converges to the identity as $k \to \infty$ (see \cite[Exercise 5.2.26]{libro}). Moreover, a result of Avila \cite{avila} (see also \cite{CF}) establishes that every recurrent circle diffeomorphism is distorted (see \cite{militon} for a higher dimensional version of this theorem). Nevertheless, a generic circle diffeomorphism of irrational rotation number does not arise as the time-1 map of a $C^1$ vector field (this directly follows from the well-known fact that, generically, the unique invariant probability measure is totally singular with respect to the Lebesgue measure).

Notice that the existence of circle diffeomorphisms that are distorted and do not arise from a vector field implies the existence of diffeomorphisms with the same properties on any higher dimensional manifold of the form $\mathrm{S}^1\times M$ (and, more generally, on any manifold admitting a nontrivial circle action, as for instance $\mathrm{S}^3$; compare \cite{militon}). Do such examples exist on other closed manifolds? What about manifolds with boundary? Does a distorted diffeomorphism fixing the (nonempty) boundary necessarily arise from a vector field, like in the case of the interval?

\vspace{0.1cm}

We close this Introduction with the announcement of another result for which the relation between the Mather invariant and the asymptotic distortion will play a fundamental role. It deals with the question of the connectedness of the space of commuting diffeomorphisms of a 1-dimensional manifold, which was raised by Rosenberg in the 70's (he was particularly interested in the {\em local path-connectedness} of the space of pairs of commuting circle diffeomorphisms). Although the nowadays classical theorems concerning linearization of  circle diffeomorphisms somewhat point in this direction (\cite{yoccoz}; see also \cite{MPHA, KF}), there are only a couple of definitive results regarding Rosenberg's question, and these are very recent. Indeed, the list reduces to: 

\begin{itemize}

\item The space of $\mathbb{Z}^d$ actions by $C^{\infty}$ diffeomorphisms of the interval is connected \cite{bonatti-eynard};

\item The space of $\mathbb{Z}^d$ actions by $C^1$ diffeomorphisms of either the circle or the interval is path-connected \cite{mio1};

\item Any two $\mathbb{Z}^d$ actions by $C^{1+\mathrm{ac}}$ circle diffeomorphisms may be connected by a path 
of $\Z^d$ actions provided one of the generators acts with an irrational rotation number \cite{mio2}. 

\end{itemize}

In \cite{article2}, we prove a general theorem of path-connectedness for the space of $\mathbb{Z}^d$ actions by $C^{1+\mathrm{ac}}$ diffeomorphisms of either the circle or the interval. The proof of this result is a tricky combination of the ideas and techniques from \cite{bonatti-eynard,mio2,mio1} and those of this work, together with further developments that are interesting by themselves. In particular, this requires extending the Mather invariant and its fundamental properties to $C^{1+\mathrm{ac}}$ (and even $C^{1+\mathrm{bv}}$) diffeomorphisms.

\vspace{0.1cm}

It is worth mentioning that several (somewhat technical) questions are spread throughout this text. We hope these will attract  
the interest of the specialists.


\section{On the asymptotic distortion}
\label{s:dist}

Let $f$ be a $C^{1+\mathrm{bv}}$ diffeomorphism of $[0,1]$ or the circle (possibly with fixed points in the interior). 
Recall from \cite{mio2} that the {\em asymptotic distortion} of $f$, denoted $\dist (f)$, is defined as 
$$ \lim_{n \to \infty} \frac{1}{n} \var (\log Df^n).$$ 
Notice that by the subadditivity property (\ref{sub-prop}) of $\mathrm{var} (\log D(\cdot))$, we have  
$$\dist (f) = \inf_{n} \frac{1}{n} \var (\log Df^n).$$
Moreover,
\begin{equation}\label{uno}
\dist (f) \leq \var (\log Df).
\end{equation}
Compared to $\mathrm{var} (\log D (\cdot))$, the asymptotic distortion has two new and useful properties that are easy to check and that we record for future use:

\begin{itemize}

\item {\bf {\em Stability:}} $\, \dist (f^n) = |n| \cdot \dist (f) \,$ for all $n \in \mathbb{Z}$ (this immediately follows from the definition and from that $\mathrm{var} (\log Df^{-1} ) = \mathrm{var} (\log Df)$).

\item {\bf {\em Invariance under conjugacy:}} $\, \dist (hfh^{-1}) = \dist (f) \,$ for every $C^{1+\mathrm{bv}}$ diffeomorphism $h$ 
(again, this follows from the definition and the subadditivity inequality $\var(\log D(hf^nh^{-1})) \leq 2 \, \var (\log Dh) + \var (\log Df^n)$; 
see the proof of Proposition \ref{doble-criterio} below for more details).

\end{itemize}

\noindent Nevertheless, $\dist$ is not subadditive. Actually, one may have $\, \dist (f_1) = \dist (f_2) = 0 \,$ yet $\, \dist (f_1 f_2) > 0. \,$ As a matter of example, for the case of the circle, it is easy to produce elliptic M\"obius maps $f_1,f_2$ whose composition $f_1 f_2$ is hyperbolic. The former have vanishing asymptotic distortion since they are conjugate to Euclidean rotations, while the latter has positive asymptotic distortion because it admits hyperbolic fixed points (see the first line of the proof of Proposition \ref{primera-prop}). For the case of the interval, explicit examples of this phenomenon will be presented in Remark \ref{no-sub}. Notice however that, if $f_1$ and $f_2$ commute, then it readily follows from the definition that 
$$\dist (f_1 f_2) \leq \dist (f_1) + \dist (f_2).$$

Most of the discussion in this work focuses on diffeomorphisms of $[0,1]$ with no interior fixed point. In order to treat the case where these points arise, given $f \in \mathrm{Diff}_+^{1+\mathrm{bv}} ([0,1])$ and a subinterval $I \subset [0,1]$, we will denote by $\mathrm{var} (\log Df; I)$ the variation of the logarithm of $Df$ restricted to $I$. With this notation, the subadditivity property of $\mathrm{var} (\log D (\cdot))$ extends to the useful estimate
$$\mathrm{var} \big( \log D (f_1 f_2) ; I \big) 
\leq 
\mathrm{var} \big( \log D f_2 ; I \big) + \mathrm{var} \big( \log D f_1 ; f_2 (I) \big).$$
If $I$ is fixed by $f$, it also makes sense to consider the asymptotic distortion of the restriction of $f$ to $I$, denoted $\dist (f |_I)$. Obviously, $\dist (f |_I) \leq \mathrm{var} (\log Df; I)$. Moreover, we have:

\vspace{0.2cm}

\begin{lem}\label{muchos}
If $f \in \mathrm{Diff}_+^{1+\mathrm{bv}} ([0,1])$ has interior fixed points, then
$$\dist (f) = \sum_{I \in \mathcal{I}} \dist (f|_I),$$
where $\mathcal{I}$ denotes the family of intervals contained in $[0,1]$ that are fixed by $f$ but contain no fixed point in the interior.
\end{lem}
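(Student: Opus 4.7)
\textbf{Proof plan for Lemma \ref{muchos}.} The approach will be to reduce the computation of $\var(\log Df^n)$ on $[0,1]$ to a countable sum of contributions over $\mathcal{I}$, and then to swap limits with this sum using dominated convergence. First I would unpack the geometry of fixed points: since $\mathrm{Fix}(f)$ is closed, its complement in $[0,1]$ is a countable disjoint union of open intervals $(a_i, b_i)$, and one checks (using that $f$ is orientation-preserving) that the closures $[a_i, b_i]$ are exactly the elements of $\mathcal{I}$. The remainder of $[0,1]$ consists of interval components of $\mathrm{Fix}(f)$, on which $f$ is the identity so $\log Df^n \equiv 0$. Together with the $I \in \mathcal{I}$, these intervals partition $[0,1]$ up to shared endpoints.

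The first key step is to establish, for every $n$, the additive identity
\begin{equation*}
\var(\log Df^n; [0,1]) = \sum_{I \in \mathcal{I}} \var(\log Df^n; I).
\end{equation*}
This follows from the continuity of $\log Df^n$: its variation measure on $[0,1]$ is atomless, so it decomposes additively along any countable partition by intervals with pairwise disjoint interiors, and the contributions from the components of $\mathrm{Fix}(f)$ (where $\log Df^n$ vanishes identically) drop out.

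Next I would divide by $n$ and take $n \to \infty$. For each fixed $I \in \mathcal{I}$, the restriction $f|_I$ lives in $\mathrm{Diff}_+^{1+\mathrm{bv},\Delta}(I)$, so $\frac{1}{n}\var(\log Df^n; I) \to \dist(f|_I)$. The decisive point is to interchange limit with summation. Here I would invoke the localized version of the subadditivity inequality recalled just before the statement: since $f^{n-1}(I)=I$, iterating gives $\var(\log Df^n; I) \le n\,\var(\log Df; I)$, hence the summands are dominated by $\var(\log Df; I)$. Because $f \in \mathrm{Diff}_+^{1+\mathrm{bv}}([0,1])$, one has $\sum_{I \in \mathcal{I}} \var(\log Df; I) \le \var(\log Df) < \infty$, so dominated convergence on the counting measure over $\mathcal{I}$ applies and yields $\dist(f) = \sum_{I \in \mathcal{I}} \dist(f|_I)$.

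I expect the main (and essentially only) subtle step to be justifying this limit--sum interchange: the family $\mathcal{I}$ may be infinite, and while Fatou's lemma immediately gives $\dist(f)\geq \sum_I \dist(f|_I)$, the reverse inequality requires genuine control on the tail. That control is precisely what the $C^{1+\mathrm{bv}}$ hypothesis provides through the summable dominating function $\var(\log Df;I)$; without it, one could only pass to finite subfamilies and lose the equality. Everything else is bookkeeping about the partition and the vanishing of $\log Df^n$ on the identity-components of $\mathrm{Fix}(f)$.
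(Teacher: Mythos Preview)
Your proposal is correct and follows essentially the same approach as the paper: both use the decomposition $\var(\log Df^n)=\sum_{I\in\mathcal I}\var(\log Df^n;I)$, the domination $\tfrac{1}{n}\var(\log Df^n;I)\le \var(\log Df;I)$ coming from localized subadditivity, and the summability $\sum_I \var(\log Df;I)\le \var(\log Df)<\infty$. The only cosmetic difference is that you invoke dominated convergence on the counting measure over $\mathcal I$, whereas the paper carries out the equivalent $\varepsilon$--finite-subfamily argument by hand.
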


\vspace{0.1cm}

\begin{proof} 
Given $\varepsilon > 0$, we may choose a finite subfamily $\mathcal{I}_{\varepsilon}$ of $\mathcal{I}$ such that 
$$\sum_{I \in \mathcal{I} \setminus \mathcal{I}_{\varepsilon}} \mathrm{var} (\log Df ; I) \leq \varepsilon.$$ 
Since $\var (\log Dg^n; I) \leq n \cdot \mathrm{var} (\log Dg; I)$ holds for every diffeomorphism $g$ fixing the interval $I$ and all $n \geq 1$, this implies that, for all $n \in \mathbb{N}$, the value of 
$$\mathrm{var} (\log Df^n) = \sum_{I \in \mathcal{I}} \mathrm{var} (\log D f^n; I)$$
is $n \varepsilon$-close to the finite sum 
$$\sum_{\mathcal{I}_{\varepsilon}} \mathrm{var} (\log D f^n; I).$$
If we divide by $n$, this becomes 
$$\left| \frac{\mathrm{var} (\log Df^n)}{n} 
- \sum_{I \in \mathcal{I}_{\varepsilon}} \frac{\mathrm{var} (\log D f^n; I)}{n} \right| \leq \varepsilon.$$
Passing to the limit in $n$, this yields
$$\Big| \dist(f) - \sum_{I \in \mathcal{I}_{\varepsilon}} \dist (f|_{I}) \Big| \leq \varepsilon.$$
Finally, letting $\varepsilon$ go to zero, this yields the desired equality. 
\end{proof}

\vspace{0.25cm}

The following crucial result is somewhat contained in \cite{mio2}, but not stated therein in a concise way. For the reader's convenience, we provide a short and direct proof.

\begin{prop} \label{doble-criterio}
One has the equality
\begin{equation}\label{primera}
\mathrm{dist}_{\infty} (f) = \inf_{h \in \mathrm{Diff}_+^{1+\mathrm{bv}}([0,1])} \mathrm{var} \big( \log D (hfh^{-1}) \big).
\end{equation}
Moreover, if $Df$ is absolutely continuous, then
\begin{equation}\label{segunda}
\mathrm{dist}_{\infty} (f) = \inf_{u \in L^1([0,1])} \left\| \frac{D^2 f}{D f} - \big( (u \circ f) \cdot Df - u \big) \right\|_{L^1}.
\end{equation}
\end{prop}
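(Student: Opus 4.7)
The plan rests on the cocycle identity obtained by the chain rule: for any $C^{1+\mathrm{bv}}$ diffeomorphism $h$, writing $\psi = \log Dh$, one has
\begin{equation*}
\log D(hfh^{-1}) \circ h \;=\; \log Df + \psi \circ f - \psi,
\end{equation*}
so that $\var(\log D(hfh^{-1})) = \var(\log Df + \psi\circ f - \psi)$ as $h$ is a homeomorphism. From this and subadditivity, $\var(\log D(hf^nh^{-1})) \leq \var(\log Df^n) + 2\var(\psi)$; dividing by $n$ and letting $n \to \infty$ gives $\dist(hfh^{-1}) \leq \dist(f)$, and the reverse inequality follows by switching roles, establishing the conjugacy-invariance of $\dist$ used in the sequel.

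For the lower bound in (\ref{primera}), I would combine this invariance with inequality (\ref{uno}): $\dist(f) = \dist(hfh^{-1}) \leq \var(\log D(hfh^{-1}))$ for every admissible $h$. The nontrivial step is the matching upper bound, and my plan is to construct explicit conjugators via the classical coboundary averaging. For a fixed $n$, set
\begin{equation*}
\psi_n \;:=\; \frac{1}{n}\sum_{k=0}^{n-1} \log Df^k.
\end{equation*}
Using the identity $\log Df^{k+1} = \log Df^k \circ f + \log Df$, a telescoping calculation yields
\begin{equation*}
\log Df + \psi_n \circ f - \psi_n \;=\; \frac{1}{n}\log Df^n.
\end{equation*}
Since each $\log Df^k$ is continuous and of bounded variation, so is $\psi_n$; after subtracting the constant $c_n := \log \int_0^1 e^{\psi_n}$, the map $h_n(x):= \int_0^x e^{\psi_n(t) - c_n}\,dt$ is a legitimate element of $\mathrm{Diff}_+^{1+\mathrm{bv}}([0,1])$ with $\log Dh_n = \psi_n - c_n$. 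By the cocycle identity, $\var(\log D(h_n f h_n^{-1})) = \frac{1}{n}\var(\log Df^n)$, and this tends to $\dist(f)$, proving (\ref{primera}).

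For (\ref{segunda}), I would specialize to the case where $Df$ is absolutely continuous. Then each $Df^k$ is AC, so $\log Df^k$ is AC with derivative $D^2f^k/Df^k \in L^1$. For $\psi = \log Dh$ arising from any $u \in L^1$ via $\psi(x) = \int_0^x u\,dt + \text{const}$, both $\log Df$ and $\psi\circ f - \psi$ are AC, so variation equals the $L^1$-norm of the derivative:
\begin{equation*}
\var(\log D(hfh^{-1})) \;=\; \Bigl\|\frac{D^2 f}{Df} + (u\circ f)\,Df - u\Bigr\|_{L^1}.
\end{equation*}
Substituting $u \leftarrow -u$ rewrites this as the expression in (\ref{segunda}), and the infimum on the right dominates $\inf_h\var(\log D(hfh^{-1})) = \dist(f)$ since every such $u$ yields a valid $C^{1+\mathrm{ac}}$ conjugator $h$. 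Conversely, the $\psi_n$ constructed above have $L^1$ derivatives $u_n = \psi_n'$ (since $f$ is AC), so the upper bound from (\ref{primera}) transfers verbatim to (\ref{segunda}).

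The main obstacle is really the cohomological construction of $\psi_n$; everything else is bookkeeping. One subtle point to handle carefully is the verification that $h_n$ is genuinely a $C^{1+\mathrm{bv}}$ (respectively $C^{1+\mathrm{ac}}$) diffeomorphism of $[0,1]$, which requires only that $\psi_n$ be a continuous BV (respectively AC) function — properties transported from $f$ through the iterates $f^k$ via the chain rule and standard BV/AC algebra on a compact interval.
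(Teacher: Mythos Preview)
Your proof is correct and follows essentially the same line as the paper. Your conjugators $h_n$ (built from $\psi_n = \tfrac{1}{n}\sum_{k=0}^{n-1}\log Df^k$) are exactly the paper's explicit conjugators $Dh_n \propto \big[\prod_{k=0}^{n-1} Df^k\big]^{1/n}$, just written additively instead of multiplicatively, and the key telescoping identity $\log Df + \psi_n\circ f - \psi_n = \tfrac{1}{n}\log Df^n$ is the same computation the paper invokes.

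The only organizational difference concerns the inequality $\dist(f)\le \inf_u\|\cdots\|_{L^1}$ in (\ref{segunda}). The paper proves it \emph{directly}: it fixes $u$, performs the change of variable $x\mapsto f^i(x)$ on the $L^1$-norm, telescopes over $i=0,\dots,n-1$, uses the cocycle identity $\sum_i \tfrac{D^2f}{Df}\circ f^i\cdot Df^i=\tfrac{D^2f^n}{Df^n}$, and obtains $\tfrac{1}{n}\var(\log Df^n)\le I + \tfrac{2\|u\|_{L^1}}{n}$ before passing to the limit. You instead deduce this inequality from (\ref{primera}) by observing that every $u\in L^1$ integrates to a $C^{1+\mathrm{ac}}$ conjugator $h$ with $\var(\log D(hfh^{-1}))$ equal to the coboundary-defect expression. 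Both routes are valid; yours is slightly more economical (it avoids repeating the iteration argument), while the paper's direct argument has the small advantage of not requiring one to check that the $u\leftrightarrow h$ correspondence preserves the relevant regularity.
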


\vspace{0.1cm}

\begin{proof} 
Since $\, (hfh^{-1})^n = hf^nh^{-1} \,$ holds for all $h \in \mathrm{Diff}_+^{1+\mathrm{bv}}([0,1])$, 
the subadditivity property of $\var (\log D (\cdot))$ and the fact that $\var (\log Dh) = \var (\log Dh^{-1})$ yield
$$\var (\log Df^n) - 2 \, \var(\log Dh) \leq \var (\log D (hfh^{-1})^n ) \leq \var (\log Df^n) + 2  \, \var(\log Dh).$$
If we divide by $n$ and pass to the limit, we obtain
\begin{equation}\label{dos}
\dist (f) = \dist (hfh^{-1}).
\end{equation}
In other words, $\dist$ is invariant under conjugacy by  $C^{1+\mathrm{bv}}$ diffeomorphisms. 

Putting (\ref{uno}) and (\ref{dos}) together, we conclude that 
$$\dist (f) = \dist (hfh^{-1}) \leq \var (\log D (hfh^{-1})).$$
Since $h$ above was arbitrary, we obtain
$$\mathrm{dist}_{\infty} (f) \leq \inf_{h \in \mathrm{Diff}_+^{1+\mathrm{bv}}([0,1])} \mathrm{var} (\log D (hfh^{-1})).$$

To prove the reverse inequality, we consider the sequence of diffeomorphisms $h_n$ defined by $h_n (0) = 0$ and 
\begin{equation}\label{def-h_n}
D h_n (x) = \frac{\big[ Df(x) \cdot Df^2 (x) \cdots Df^{n-1}(x) \big]^{1/n}}{\int_0^1 \big[ Df(t) \cdot Df^2(t) \cdots Df^{n-1}(t) \big]^{1/n} dt}.
\end{equation}
Notice that $h_n$ belongs to $\mathrm{Diff}_+^{1+\mathrm{bv}} ([0,1])$. A straightforward computation (that we leave to the reader; alternatively, see the proof of Proposition \ref{calculo-general}) shows that
$$\log \big( D (h_n f h_n^{-1}) (x) \big) = \frac{1}{n} \log \big( Df^n (h_n^{-1} (x)) \big).$$
Therefore, 
$$\var \big( \log D (h_n f h_n^{-1}) \big) = \frac{1}{n} \var (\log Df^n) \stackrel{n \to \infty}{\longrightarrow} \dist (f),$$
which closes the proof of (\ref{primera}).

For the proof of (\ref{segunda}) first remind that, if $g \!\in\! \mathrm{Diff}^{1+\mathrm{bv}}_+ ([0,1])$ has absolutely continuous derivative, then 
$$\var (\log Dg) = \int_0^1 \big| D (\log (Dg)) \big| \, dx = \int_0^1 \left| \frac{D^2 g}{D g} (x) \right| \, dx = \left\| \frac{D^2 g}{D g} \right\|_{L^1}.$$
Next, given $u \in L^1 ([0,1])$, let 
$$I :=  \left\| \frac{D^2 f}{D f} - \big( (u \circ f) \cdot Df - u \big) \right\|_{L^1}.$$
By performing a change of variable we for each $i \geq 1$, we obtain 
$$I = \left\| \frac{D^2 f}{D f} \circ f^i \cdot D f^i - \big( (u \circ f^{i+1}) \cdot Df^{i+1} - u \circ f^i \cdot Df^i \big) \right\|_{L^1}.$$
The triangular inequality and a telescopic sum trick then yield, for each $n \geq 1$, 
$$\left\| \sum_{i=0}^{n-1} \frac{D^2 f}{D f} \circ f^i \cdot Df^i - \big( (u \circ f^{n}) \cdot Df^n - u \big) \right\|_{L^1} \leq nI.$$
Now recall the cocycle identity
\begin{equation}
\label{relacion-cociclo}
\frac{D^2 (g_2 g_1)}{D (g_2 g_1)} = \frac{D^2 g_1}{D g_1} + \frac{D^2 g_2}{D g_2} \circ g_1 \cdot Dg_1,
\end{equation}
which easily yields 
$$\sum_{i=0}^{n-1} \frac{D^2 f}{D f} \circ f^i \cdot Df^i = \frac{D^2 f^n}{D f^n}.$$
Introducing this equality in the previous inequality, dividing by $n$ and using the triangle inequality, we obtain 
$$\frac{1}{n} \left\| \frac{D^2 f^n}{D f^n} \right\|_{L^1} \leq I + \frac{\| u \circ f^n \cdot Df^n - u\|_{L^1}}{n},$$
hence
$$\frac{1}{n} \var (\log Df^n) \leq I + \frac{2 \, \| u \|_{L^1}}{n}.$$
Passing to the limit in $n$, this yields $\, \dist (f) \leq I. \,$ Since this holds for all $u \in L^1([0,1])$, we finally obtain
$$\mathrm{dist}_{\infty} (f) \leq \inf_{u \in L^1([0,1])} \left\| \frac{D^2 f}{D f} - \big( (u \circ f) \cdot Df - u \big) \right\|_{L^1}.$$

To prove the reverse inequality, fix $\varepsilon > 0$. By the first part of the proposition,
 there exists $h \in \mathrm{Diff}^{1+\mathrm{bv}}_+ ([0,1])$ such that 
$$\var \big( \log D (hfh^{-1}) \big) < \dist (f) + \varepsilon.$$
Actually, since $f$ has absolutely continuous derivative, the diffeomorphism $h$ can be chosen satisfying this property as well: this directly follows from the explicit formula (\ref{def-h_n}) for the conjugators. Thus, $hfh^{-1}$ has absolutely continuous derivative, and
$$\var \big( \log D (hfh^{-1}) \big) = \left\| \frac{D^2 (hfh^{-1})}{D (hfh^{-1})} \right\|_{L^1}.$$
Therefore,
\begin{eqnarray*}
\dist (f) + \varepsilon 
&>& \left\| \frac{D^2 h}{D h} \circ (fh^{-1}) \cdot D (fh^{-1}) + \frac{D^2 f}{D f} \circ h^{-1} \cdot D h^{-1}+ \frac{D^2 h^{-1}}{D h^{-1}} \right \|_{L^1} \\
&=&  \left\| \frac{D^2 f}{D f} \circ h^{-1} \cdot D h^{-1} + 
\frac{D^2 h}{D h} \circ (fh^{-1}) \cdot D (fh^{-1}) - \frac{D^2 h}{D h} \circ h^{-1} \cdot Dh^{-1} \right \|_{L^1} \\
&=&  \left\| \frac{D^2 f}{D f} + 
\frac{D^2 h}{D h} \circ f \cdot D f - \frac{D^2 h}{D h} \right \|_{L^1},
\end{eqnarray*}
where we have used the identity 
$$\frac{D^2 h^{-1}}{D h^{-1}} = - \frac{D^2 h}{D h} \circ h^{-1} \cdot D h^{-1}$$
(which easily follows from the cocycle identity (\ref{relacion-cociclo})) and a change of variable equality for the $L^1$-norm. Letting $\, u_h := - D^2 h / D h \,$ (which is an $L^1$-function since $h$ has absolutely continuous derivative), this reads as 
$$\left\| \frac{D^2 f}{D f} - \big( (u_h \circ f) \cdot Df - u_h \big) \right\|_{L^1} < \dist (f) + \varepsilon,$$
thus showing that 
$$\inf_{u \in L^1([0,1])} \left\| \frac{D^2 f}{D f} - \big( (u \circ f) \cdot Df - u \big) \right\|_{L^1} \leq \dist (f) + \varepsilon.$$
Finally, since this holds for all $\varepsilon > 0$, this yields 
$$\inf_{u \in L^1([0,1])} \left\| \frac{D^2 f}{D f} - \big( (u \circ f) \cdot Df - u \big) \right\|_{L^1} \leq \dist (f),$$
which closes the proof. 
\end{proof}

\vspace{0.1cm}

\begin{rem} 
The previous proposition applies verbatim to circle diffeomorphisms. Recall, however, that in case of irrational rotation number, the asymptotic distortion vanishes provided the derivative is absolutely continuous. This is essentially a consequence of the ergodicity with respect to the Lebesgue measure; see \cite{mio2}. It is interesting to compare this result with our Theorem \ref{t:vanish} here in the setting of parabolic fixed points: although maps of the interval are never ergodic, when these arise from $C^1$ vector fields of the closed interval $[0,1]$ (so that the Mather invariant is trivial), we may think that the ``richness'' of the centralizer is somewhat related to the vanishing of the asymptotic distortion. This is, however, just a heuristic view of the phenomenon. 
\end{rem}

\begin{rem} 
\label{conjugators-h_n} 
In the equality (\ref{segunda}) above, we may restrict to functions $u$ that belong to the subspace $L^1_0$ of $L^1$ functions with zero mean in two relevant cases, namely for circle diffeomorphisms, and for diffeomorphisms of the interval for which the endpoints are parabolic fixed points. One way to prove this is just by repeating the argument above inside this space $L^1_0$ starting from the fact that, in both cases, $D^2 f / Df $ belongs to $L^1_0$. The relevance of this arises when noticing that given a sequence of functions $u_n$ realizing the infimum in (\ref{segunda}), a sequence of conjugating diffeomorphisms $h_n$ realizing the infimum in (\ref{primera}) can be obtained by solving the equation $D^2 h_n / D h_n = -u_n$, as one may easily check. (Just reverse some of the previous arguments.) Now, on the one hand, a circle diffeomorphism satisfying this equality only exists in case of zero mean for $u_n$. On the other hand, 
for a diffeomorphism of the interval that is tangent to the identity at the endpoints, zero mean for $u_n$ translates into that the diffeomorphism $h_n$ solving the preceding equation satisfies  $Dh_n (0) = Dh_n (1)$, as it follows from the following explicit expression for $h$ associated to $u \in L^1$:
$$h (x) = \frac{\int_{0}^{x} \exp (\int_0^t -u(s) \, ds) \, dt}{\int_0^1 \exp(\int_{0}^{t} -u(s) \, ds) \, dt}.$$
However, we cannot ensure {\em a priori} that the value of $Dh_n (0) = Dh_n (1)$ can be taken equal to $1$. We formulate this as an explicit question, for which we suspect a negative answer. (See however Remark \ref{raro}.)

\begin{qsintro} 
\label{q:conj-tg}
In the right-hand side expression of equality (\ref{primera}) above, is it possible to restrict to conjugating diffeomorphisms $h$ that are $C^1$ tangent to the identity at the endpoints whenever the starting diffeomorphism $f$ is in $\mathrm{Diff}^{1+\mathrm{bv},\Delta}_+([0,1])$ and has only parabolic fixed points ?
\end{qsintro}

\noindent{\bf {\em On the conjugators $h_n$}.} 
As a kind of illustration, notice that if $f \in \mathrm{Diff}^{1,\Delta}_+ ([0,1])$ has only parabolic fixed points, then the conjugating diffeomorphisms $h_n$ given by (\ref{def-h_n}) have derivative strictly greater than 1 at both endpoints. Indeed, this follows as a direct application of H\"older's inequality just noticing that, for all $k \geq 1$, 
$$\int_0^1 Df^k (t) \, dt = 1.$$
Actually, a stronger fact holds, as it is shown below. 
\end{rem}

\vspace{0.02cm}

\begin{prop} \label{derivada-extremos}
Let $f \in \mathrm{Diff}^{1,\Delta}_+ ([0,1])$ have only parabolic fixed points, and let $h_n$ be the sequence of diffeomorphisms defined by (\ref{def-h_n}), namely,
$$D h_n (x) = \frac{\big[ Df(x) \cdot Df^2 (x) \cdots Df^{n-1}(x) \big]^{1/n}}{\int_0^1 \big[ Df(t) \cdot Df^2(t) \cdots Df^{n-1}(t) \big]^{1/n} dt}.$$
Then the derivative of $h_n$ at each endpoint diverges as $n$ goes to infinity.
\end{prop}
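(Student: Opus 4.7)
Since $Df(0)=Df(1)=1$, the chain rule gives $Df^k(0)=Df^k(1)=1$ for every $k\ge 1$, so the numerator in the formula for $Dh_n$ equals $1$ at each endpoint. Consequently
\[
 Dh_n(0)\;=\;Dh_n(1)\;=\;\Bigl(\int_0^1 G_n(t)\,dt\Bigr)^{\!-1},
 \qquad G_n(t):=\Bigl(\prod_{k=1}^{n-1}Df^k(t)\Bigr)^{\!1/n},
\]
and the proposition reduces to the assertion $\int_0^1 G_n(t)\,dt \to 0$ as $n\to\infty$. The H\"older bound of the preceding Remark gives only $\int_0^1 G_n<1$, so a refinement is needed. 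Assume without loss of generality that $f(x)>x$ on $(0,1)$.

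First, I would establish the pointwise decay $G_n(t)\to 0$ for every $t\in(0,1)$. Writing $\log Df^k(t)=\sum_{j=0}^{k-1}\log Df(f^j(t))$ and setting $u_j:=1-f^j(t)\searrow 0$, the identity $u_j-u_{j+1}=\int_0^{u_j}(1-Df(1-v))\,dv$ together with the (trivial) telescoping divergence $\sum_j\log(u_j/u_{j+1})=\log(u_0/0^+)=+\infty$ and the elementary inequality $\log(u_j/u_{j+1})\ge (u_j-u_{j+1})/u_j$ forces $\sum_j(1-Df(f^j(t)))=+\infty$. Hence $\log Df^k(t)\to-\infty$, and the Ces\`aro-weighted average $\frac1n\sum_{k=1}^{n-1}\log Df^k(t)$ diverges to $-\infty$ as well, so $G_n(t)\to 0$.

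Second, I would pass from pointwise to integral convergence. The AM--GM inequality applied to the $n-1$ positive numbers $Df(t),\dots,Df^{n-1}(t)$ gives the domination
\[
 G_n(t)\le A_n(t)^{(n-1)/n},
 \qquad A_n:=\frac{1}{n-1}\sum_{k=1}^{n-1}Df^k,\qquad \textstyle\int_0^1 A_n=1,
\]
and H\"older then yields the absolute-continuity-type estimate $\int_E G_n\le|E|^{1/n}$ for every measurable $E\subset[0,1]$. Combining this with Egorov's theorem applied to the pointwise convergence of Step~1 — on the complement of a set of measure $<\varepsilon$, $G_n\to 0$ uniformly — and handling the exceptional set by the H\"older bound, one concludes $\int_0^1 G_n\to 0$ by letting $\varepsilon\to 0$.

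The main obstacle is the second step: the family $\{G_n\}$ is \emph{not} uniformly bounded in $L^\infty$ (in principle a single factor $Df^k(t)$ can be very large), and the estimate $\int_E G_n\le|E|^{1/n}$ does not provide uniform integrability as $n\to\infty$ since $|E|^{1/n}\to 1$ for any fixed $|E|>0$. One must therefore shrink the exceptional set $F=F_n$ with $n$ in a way compatible with an \emph{effective} rate of decay of $G_n$ on compact subsets of $(0,1)$; producing such a quantitative refinement of Step~1, using the parabolicity of $f$ at both endpoints, is where I expect the technical heart of the argument to lie.
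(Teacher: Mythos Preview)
Your outline has the right target — $\int_0^1 G_n\to 0$ — but the route you propose does not close, and the paper takes a completely different path that avoids both of your difficulties.

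\textbf{Step 1 is shakier than you indicate.} From $\sum_j\log(u_j/u_{j+1})=+\infty$ and the inequality $\log(u_j/u_{j+1})\ge (u_j-u_{j+1})/u_j$ you cannot conclude the divergence of the smaller side. One can recover $\sum_j(u_j-u_{j+1})/u_j=+\infty$ by a different comparison (use $\log x\le x-1$ and the fact that $u_{j+1}/u_j\to 1$), but this quantity is an average of $1-Df$ over $[y_j,1]$, not the value $1-Df(y_j)$ you need. In the bare $C^1$ setting there is no bounded-distortion estimate linking $Df^k(t)$ to orbit-interval lengths, so the jump from $\sum_j(u_j-u_{j+1})/u_j=+\infty$ to $\sum_j(1-Df(f^j(t)))=+\infty$ is not justified as written.

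\textbf{Step 2 is the real gap, and it is essential.} You correctly diagnose that $\int_E G_n\le |E|^{1/n}$ gives nothing as $n\to\infty$ for fixed $|E|>0$. There is no way to repair this within the Egorov framework: the mass of $G_n$ genuinely concentrates near the endpoints, where any uniform-on-compacts estimate is useless, and it is precisely the $n$-dependence of the Hölder bound that kills the argument.

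\textbf{What the paper does instead.} Fix $p\in(0,1)$, let $I$ be the fundamental interval $[p,f(p)]$, and write $[0,1]=\bigcup_{k\in\mathbb Z}f^k(I)$. On each $f^k(I)$ apply Hölder to the $n$ factors $1,\,Df,\,\dots,\,Df^{n-1}$ separately (conjugate exponents $n,\dots,n$): since $\int_{f^k(I)}Df^i=|f^{k+i}(I)|$, this gives
\[
\int_0^1 G_n \;\le\; \sum_{k\in\mathbb Z}\bigl(|f^k(I)|\cdot|f^{k+1}(I)|\cdots|f^{k+n-1}(I)|\bigr)^{1/n}.
\]
So the problem reduces to a purely combinatorial lemma: if $a_k>0$ and $\sum_k a_k<\infty$, then $S_n:=\sum_k(a_k\cdots a_{k+n-1})^{1/n}\to 0$. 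The paper proves this by a two-block Hölder splitting (cut each product into a short block of length $\sim n/3$ in the tail of the sequence and a long block of length $\sim 2n/3$, and use that the tail sums are small). This lemma uses nothing about $f$ beyond the summability of the $a_k$, and it is exactly what supplies the ``quantitative refinement'' you were missing: the tail control is built in via the smallness of $\sum_{|k|>N}a_k$, without ever appealing to pointwise behaviour of $G_n$.

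In short, the paper bypasses the pointwise-to-integral passage entirely by applying Hölder \emph{locally on fundamental domains before summing}, which turns the analytic question into an elementary one about sequences.
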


\begin{proof} First notice that 
$$Dh_n (0) = D h_n (1) = \frac{1}{\int_0^1 \big[ Df(t) \cdot Df^2(t) \cdots Df^{n-1}(t) \big]^{1/n} dt}.$$
Fix $p \in (0,1)$, and denote by $I$ the (closed) {\em fundamental interval} of endpoints $p$ and $f(p)$. Then
$$\int_0^1 \big[ Df(t) \cdot Df^2(t) \cdots Df^{n-1}(t) \big]^{1/n} dt 
= 
\sum_{k \in \mathbb{Z}} \int_{f^k (I)} \big[ Df(t) \cdot Df^2(t) \cdots Df^{n-1}(t) \big]^{1/n} dt.$$
By H\"older's inequality, this implies 
\begin{small}\begin{eqnarray*}
\int_0^1 \! \big[ Df(t) \cdots Df^{n-1}(t) \big]^{1/n} dt 
&\leq& \sum_{k \in \mathbb{Z}}
\left[ \int_{f^k(I)} 1 \, dt \right]^{1/n} \left[ \int_{f^k(I)} \! Df(t) \, dt \right]^{1/n} \!\! \cdots \left[ \int_{f^k(I)} \! Df^{n-1}(t) \, dt \right]^{1/n}\\
&=&
\sum_{k \in \mathbb{Z}} \big| f^k(I) \big|^{1/n} \big| f^{k+1}(I) \big|^{1/n} \cdots \big| f^{k+n-1} (I) \big|^{1/n}.
\end{eqnarray*}
\end{small}
The claim of the proposition then follows from the elementary lemma below.
\end{proof}

\begin{lem} 
Let $a_k$ be positive numbers indexed by $k \!\in\! \mathbb{Z}$ whose total sum is finite. If we let 
$$S_n := \sum_{k \in \mathbb{Z}} \big[ a_k \cdot a_{k+1} \cdots a_{k+n-1}\big]^{1/n},$$
then $S_n$ converges to zero as $n$ goes to infinity.
\end{lem}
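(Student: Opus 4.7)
The plan is to split $S_n$ according to whether the window $\{k,k+1,\ldots,k+n-1\}$ lies entirely in the ``tail'' of the sequence or intersects a bounded ``bulk''. Summability of $(a_k)$ controls the far part easily, while the near part is small because the few bulk terms are crushed by the many tail factors appearing in the geometric mean.

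Given $\varepsilon>0$, I would first choose $R$ so that $\sum_{|m|>R} a_m < \varepsilon$ (which in particular forces $a_m<\varepsilon$ for $|m|>R$), and set $C:=\sup_k a_k$, which is finite since $a_k\to 0$. For ``far'' indices $k$ (those with $k>R$ or $k+n-1<-R$), applying AM-GM termwise yields $[a_k\cdots a_{k+n-1}]^{1/n}\leq \frac{1}{n}\sum_{j=0}^{n-1}a_{k+j}$. Summing and noting that each $a_m$ with $|m|>R$ appears in at most $n$ such windows bounds the far contribution by $\sum_{|m|>R}a_m<\varepsilon$.

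For the $2R+n$ ``near'' indices $k\in[-R-n+1,R]$, I would split the product into at most $2R+1$ bulk factors (each $\leq C$) and at least $n-2R-1$ tail factors whose indices lie outside $[-R,R]$ and whose sum is therefore $<\varepsilon$. Applying AM-GM to the tail product then gives
\[
[a_k\cdots a_{k+n-1}]^{1/n}\leq C^{(2R+1)/n}\left(\frac{\varepsilon}{n-2R-1}\right)^{(n-2R-1)/n}.
\]
As $n\to\infty$, the first factor tends to $1$, while the second behaves like $\varepsilon/n$; multiplied by the $\sim n$ near indices, this contributes $\varepsilon(1+o(1))$. Combining with the far part yields $\limsup_n S_n\leq 2\varepsilon$, and letting $\varepsilon\to 0$ finishes the proof.

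The main obstacle is precisely this near contribution: a naive AM-GM over the whole window only yields the uniform bound $S_n\leq\sum_k a_k$, independent of $n$. One must instead exploit the fact that a window of length $n$ which crosses a bounded bulk still contains roughly $n$ tail factors, whose geometric mean provides the necessary $\varepsilon/n$ factor that exactly cancels against the $\sim n$ near indices.
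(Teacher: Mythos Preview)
Your argument is correct. The far/near decomposition together with AM--GM on the tail factors does exactly what is needed, and the crucial cancellation ``$(\varepsilon/n)\cdot n \sim \varepsilon$'' for the near windows is handled properly. One cosmetic point: the bound $C^{(2R+1)/n}$ on the bulk factors as written requires $C\ge 1$; replacing $C$ by $\max(C,1)$ fixes this without affecting anything else.

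Your route is genuinely different from the paper's. The paper does not separate windows by whether they meet a fixed bulk $[-R,R]$; instead it fixes $N$ with small tail, splits the index range asymmetrically at $k=-N-n/3$, and then applies H\"older's inequality (with exponents $3$ and $3/2$) to each piece, factoring each product into its first $n/3$ and last $2n/3$ terms (or vice versa). The point is that for indices $k\le -N-n/3$ the first third of the window lies in the left tail, while for $k\ge -N-n/3$ and $n>6N$ the last third lies in the right tail; H\"older then isolates a tail factor of size $(\varepsilon^3/S^2)^{1/3}$. Your approach avoids H\"older entirely, uses only AM--GM, and is arguably more transparent: the mechanism ``a long window touching a bounded bulk still contains $\sim n$ tail factors, whose geometric mean is $\lesssim \varepsilon/n$'' is made completely explicit. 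The paper's argument, on the other hand, does not need to track the exact number of bulk versus tail factors in each window, trading that bookkeeping for the H\"older machinery.
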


\begin{proof} 
We will perform the computations along the integers $n$ that are multiples of $3$ to better manipulate the indices,  
leaving the easy modifications to the two other cases as a task for the reader. 
(Alternatively, see Remark \ref{rem-subad} below.) Denote $S:= S_1$, and given $\varepsilon> 0$, fix $N \geq 1$ such that 
\begin{equation}\label{prescribing-epsilon}
\sum_{k=-\infty}^{-N-1} a_k  \leq \frac{\varepsilon^3}{8 \, S^2}, \qquad \qquad 
\sum_{k=N+1}^{\infty} a_k \leq \frac{\varepsilon^3}{8 \, S^2}.
\end{equation}
Starting from 
$$S_n 
<
\sum_{k= - \infty}^{-N - \frac{n}{3}} [a_k \cdots a_{k+n-1}]^{1/n} \,\, + \sum^{\infty}_{k = -N - \frac{n}{3}} [a_k \cdots a_{k+n-1}]^{1/n}$$
and using H\"older's inequality, we obtain that $S_n$ is smaller than the sum of 
$$\left[ \sum_{k= - \infty}^{-N - \frac{n}{3}} \big[ a_k^{1/n} \cdots a_{k+\frac{n}{3} - 1}^{1/n} \big]^{3} \right]^{1/3} 
\left[ \sum_{k= - \infty}^{-N - \frac{n}{3}} \big[ a_{k+\frac{n}{3}}^{1/n} \cdots a_{k + n - 1}^{1/n} \big]^{3/2} \right]^{2/3}$$
and
$$\left[ \sum_{k=-N-\frac{n}{3}}^{\infty} \big[ a_k^{1/n} \cdots a_{k+\frac{2n}{3} - 1}^{1/n} \big]^{3/2} \right]^{2/3} 
\left[ \sum_{k=-N-\frac{n}{3}}^{\infty} \big[ a_{k+\frac{2n}{3}}^{1/n} \cdots a_{k+ n  - 1}^{1/n} \big]^{3}  \right]^{1/3}.$$
Just by changing indices, these expressions may be respectively rewritten as 
$$\left[ \sum_{\ell = - \infty}^{-N - 1} \big[ a_{\ell - \frac{n}{3} + 1}^{3/n} \cdots a_{\ell}^{3/n} \big] \right]^{1/3} 
\left[ \sum_{k= - \infty}^{-N - \frac{n}{3}} \big[ a_{k+\frac{n}{3}}^{3/2n} \cdots a_{k + n - 1}^{3/2n} \big] \right]^{2/3}$$
and
$$\left[ \sum_{k=-N-\frac{n}{3}}^{\infty} \big[ a_k^{3/2n} \cdots a_{k+\frac{2n}{3} - 1}^{3/2n} \big] \right]^{2/3} 
\left[ \sum_{\ell=-N+\frac{n}{3}}^{\infty} \big[ a_{\ell}^{3/n} \cdots a_{\ell + \frac{n}{3} - 1}^{3/n} \big]  \right]^{1/3}.$$
Again, by H\"older's inequality, the first expression is bounded from above by 
$$\prod_{i = 0}^{\frac{n}{3} - 1} \left[ \sum_{\ell = -\infty}^{-N-1} a_{\ell - i} \right]^{1/n} \cdot 
\prod_{j=0}^{\frac{2n}{3} - 1} \left[ \sum_{k = -\infty}^{\infty} a_k \right]^{1/n},$$
which, by (\ref{prescribing-epsilon}), is smaller than or equal to 
$$\left[ \frac{\varepsilon^3}{8 \, S^2} \right]^{1/3} \! \cdot \, S^{2/3} = \frac{\varepsilon}{2}.$$ 
Now, if  we choose $n > 6N$, then the second expression is bounded from above by 
$$\left[ \sum_{k=-N-\frac{n}{3}}^{\infty} \big[ a_k^{3/2n} \cdots a_{k+\frac{2n}{3} - 1}^{3/2n} \big] \right]^{2/3} 
\left[ \sum_{\ell=N+1}^{\infty} \big[ a_{\ell}^{3/n} \cdots a_{\ell + \frac{n}{3} - 1}^{3/n} \big]  \right]^{1/3},$$
and H\"older's inequality shows again that this is smaller than or equal to 
$$S^{2/3} \cdot \left[ \frac{\varepsilon^3}{8 \, S^2} \right]^{1/3}= \frac{\varepsilon}{2}.$$ 
We thus conclude that $S_n < \varepsilon$ for $n$ larger than $6N$ (and multiple of $3$). Since $\varepsilon > 0$ was arbitrary, this closes the proof of the convergence of $S_n$ towards $0$.
\end{proof}

\begin{rem} 
\label{rem-subad} 
In the lemma above, one may easily check that the sequence $\, n \, S_n \, $ is subadditive:
$$(m+n) \, S_{m+n} \leq m \, S_m + n \, S_n.$$
Indeed, for all $k \in \mathbb{Z}$, by the arithmetic-geometric inequality 
$$\frac{b_1+\dots +b_{2n+2m}}{2n+2m}\ge (b_1 \cdot b_2 \cdots  b_{2n+2m})^{1/(2n+2m)}$$
applied to $b_i$'s of the form $[ a_k \cdots a_{k+n-1}]^{1/n}$ ($n$ times), $[ a_{k+m} \cdots a_{k+n+m-1}]^{1/n}$ ($n$ times), $ [a_k \cdots a_{k+m-1} ]^{1/m} $ ($m$ times) and $[ a_{k+n} \cdots a_{k+m+n-1} \big]^{1/m}$ ($m$ times), the value of 
\begin{small}
$$n \, \big( [ a_k \cdots a_{k+n-1} \big]^{1/n} + \big[ a_{k+m} \cdots a_{k+n+m-1} \big]^{1/n} \big) 
\, + \, m \, \big( \big[ a_k \cdots a_{k+m-1} \big]^{1/m} + \big[ a_{k+n} \cdots a_{k+m+n-1} \big]^{1/m} \big)$$  
\end{small}is greater than or equal to 
\begin{align*}
(2n+2m) & \left( [ a_k \cdots a_{k+n-1}]\cdot[ a_{k+m} \cdots a_{k+n+m-1}]\cdot [a_k \cdots a_{k+m-1} ]\cdot[ a_{k+n} \cdots a_{k+m+n-1} \big]\right)^{1/(2n+2m)} \end{align*}
which, after interchanging the second and last terms of the product in the parenthesis, becomes
$$
\big[ (a_k \cdots a_{k+m+n-1})^2 \big]^{1/(2n+2m)} = 
(2n+2m) \, \big[ a_k \cdots a_{k+m+n-1} \big]^{1/(n+m)}.$$
Summing over all $k \in \mathbb{Z}$, this yields
$$2n \, S_n + 2m \, S_m \geq (2n+2m) \, S_{n+m},$$
as announced. Notice that this implies the convergence of $S_n$. However, it seems hard to show that the limit of $S_n$ vanishes just by analyzing the differences between arithmetic and geometric means at each step.
\end{rem}

\vspace{0.35cm}

Another particular feature of the conjugators $h_n$ is given below.

\vspace{0.35cm}

\begin{prop} 
\label{derivada-interior}
Let $f \in \mathrm{Diff}^{1+\mathrm{bv},\Delta}_+ ([0,1])$ have only parabolic fixed points, and let again $h_n$ be the sequence of diffeomorphisms defined by (\ref{def-h_n}). Then the derivative of $h_n$ at each interior point converges to $0$ as $n$ goes to infinity. Actually, the convergence is uniform on compact subsets of $(0,1)$.
\end{prop}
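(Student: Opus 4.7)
Fix a fundamental interval $I$ of $f$ and set $a_j := |f^j(I)|$ for $j \in \mathbb{Z}$, as in the proof of Proposition~\ref{derivada-extremos}. Write
\[
\mu_n(x) := \big[Df(x) \cdot Df^2(x) \cdots Df^{n-1}(x)\big]^{1/n}, \qquad Z_n := \int_0^1 \mu_n,
\]
so that $Dh_n(x) = \mu_n(x)/Z_n$. The goal is to show $\mu_n(x) = o(Z_n)$, uniformly on compact subsets of $(0,1)$.

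I would first invoke bounded distortion: since the iterates $\{f^j(I)\}_{j \in \mathbb{Z}}$ are pairwise disjoint in $(0,1)$, for every $k_0 \in \mathbb{Z}$, every $x \in f^{k_0}(I)$ and every $k \ge 0$,
\[
e^{-V}\, \frac{a_{k_0+k}}{a_{k_0}} \le Df^k(x) \le e^{V}\, \frac{a_{k_0+k}}{a_{k_0}}, \qquad V := \var(\log Df).
\]
Taking the geometric mean over $k = 1, \ldots, n-1$ and simplifying yields
\[
e^{-V}\, \frac{U_n(k_0)}{a_{k_0}} \le \mu_n(x) \le e^{V}\, \frac{U_n(k_0)}{a_{k_0}}, \qquad U_n(j) := \big[a_j\, a_{j+1} \cdots a_{j+n-1}\big]^{1/n}.
\]
Integrating the lower bound over each $f^j(I)$ gives $\int_{f^j(I)} \mu_n \ge e^{-V} U_n(j)$, and hence, for any $L \ge 1$,
\[
Z_n \ge e^{-V} \sum_{|j-k_0| \le L} U_n(j).
\]

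The crux is to compare $U_n(j)$ with $U_n(k_0)$ for $|j-k_0|$ bounded. After cancellation of common factors, for $\ell \ge 0$,
\[
\frac{U_n(k_0+\ell)}{U_n(k_0)} \,=\, \prod_{i=0}^{\ell-1} \left(\frac{a_{k_0+n+i}}{a_{k_0+i}}\right)^{\! 1/n},
\]
and symmetrically for $\ell < 0$. I would show that each factor tends to $1$ as $n \to \infty$ (with $\ell$ fixed): $a_{k_0+i}^{1/n} \to 1$ trivially, while $a_{k_0+n+i}^{1/n} \to 1$ follows from the sub-exponential decay $\log a_j / j \to 0$ as $|j| \to \infty$, combined with $(k_0+n+i)/n \to 1$. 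This sub-exponential decay is a direct consequence of the parabolicity hypothesis: the mean value theorem gives $a_{j+1}/a_j = Df(\xi_j)$ with $\xi_j \to 0$ (resp.\ $1$) as $j \to +\infty$ (resp.\ $-\infty$), so $\log(a_{j+1}/a_j) \to 0$, and Ces\`aro summation delivers the claim.

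Consequently, for any fixed $L \ge 1$ and all sufficiently large $n$, $U_n(j) \ge \tfrac{1}{2} U_n(k_0)$ whenever $|j-k_0| \le L$. Combining this with the estimates above,
\[
Dh_n(x) \,\le\, \frac{e^V U_n(k_0)/a_{k_0}}{\tfrac{1}{2} e^{-V}(2L+1) U_n(k_0)} \,=\, \frac{2 e^{2V}}{a_{k_0}(2L+1)}.
\]
Since $L$ is arbitrary, this forces $Dh_n(x) \to 0$ pointwise; the uniformity on a compact $K \subset (0,1)$ follows because $K$ meets only finitely many $f^{k_0}(I)$, so $a_{k_0}$ is uniformly bounded below on the relevant range. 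The main technical hurdle lies in the Ces\`aro step, and more precisely in showing $a_j^{1/n} \to 1$ when $j$ grows linearly with $n$: this is where parabolicity of both endpoints is essential, and the statement is visibly false in the hyperbolic setting.
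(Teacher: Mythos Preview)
Your argument is correct, and shares the same opening reduction as the paper: bounded distortion on each $f^{k_0}(I)$ to compare $\mu_n$ and $Z_n$ with the quantities $U_n(j) := [a_j\cdots a_{j+n-1}]^{1/n}$, yielding
\[
Dh_n(x)\;\le\;\frac{e^{2V}}{a_{k_0}}\cdot\frac{U_n(k_0)}{\sum_{j} U_n(j)}
\]
(the paper keeps the full sum over $j\in\mathbb Z$ in the denominator, whereas you truncate to $|j-k_0|\le L$).

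Where the two diverge is in the combinatorial endgame. The paper proves a separate lemma: under the hypothesis $a_{k+1}/a_k\to 1$, the ratio $U_n(k_0)/\sum_{j\in\mathbb Z} U_n(j)$ is bounded by $1-c_n^{1/n}$, where $c_n:=\inf_k(a_{k+n}/a_k)$, and then shows $c_n^{1/n}\to 1$ via a short multiplicative argument. You instead observe that for each \emph{fixed} $\ell$, the telescoping identity $U_n(k_0+\ell)/U_n(k_0)=\prod_{i=0}^{\ell-1}(a_{k_0+n+i}/a_{k_0+i})^{1/n}$ tends to $1$, because parabolicity forces $\log a_j/j\to 0$ (Ces\`aro); hence $(2L+1)$ neighboring terms each contribute at least $\tfrac12 U_n(k_0)$ to the denominator, and letting $L\to\infty$ finishes. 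Both routes ultimately rest on the same consequence of parabolicity ($a_{k+1}/a_k\to 1$, equivalently sub-exponential decay of $a_j$). Your version is slightly more elementary in that it never needs the full infinite sum or the uniform quantity $c_n$; the paper's version has the merit of isolating a clean standalone lemma about positive summable sequences. One cosmetic remark: your parenthetical about which endpoint $\xi_j$ approaches is orientation-dependent (if $f(x)>x$ then $\xi_j\to 1$ as $j\to+\infty$), but the conclusion $Df(\xi_j)\to 1$ holds either way, so nothing is affected.
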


\begin{proof} 
We consider a fundamental interval $I$ as in the proof of Proposition \ref{derivada-extremos}. For each $k \in \mathbb{Z}$ and all $0 \leq i \leq n-1$, there is a point $s = s_{k,i} \in f^k (I)$ such that
$$\frac{|f^{k+i} (I)|}{|f^k(I)|} = Df^i (s).$$
Moreover, for each $t \in f^k (I)$, 
\begin{eqnarray*}
\left| \log \left( \frac{Df^i (t)}{Df^i (s)} \right) \right| 
&=& 
\left| \log \left( \frac{Df (t) \cdot Df (f(t)) \cdots Df (f^{i-1}(t))}{Df (s) \cdot Df (f(s)) \cdots Df (f^{i-1}(s)) } \right) \right| \\
&\leq&
\sum_{j=0}^{i-1} \big| \log Df (f^j(t)) - \log Df (f^j(s)) \big| 
\,\,\,\, \leq \,\,\,\,
\mathrm{var} (\log Df ).
\end{eqnarray*}
Letting $V:= \var (\log Df )$, this yields 
$$ \frac{e^{V} |f^{k+i} (I)|}{|f^k(I)|} = e^{V} Df^i (s) \geq Df^i (t) \geq e^{-V} Df^i (s) = \frac{e^{-V} |f^{k+i} (I)|}{|f^k(I)|}.$$
Therefore, 
\begin{small}\begin{eqnarray*}
\int_0^1 \big[ Df(t) \cdot Df^2(t) \cdots Df^{n-1}(t) \big]^{1/n} dt 
&=& 
\sum_{k \in \mathbb{Z}} \int_{f^k (I)} \big[ Df(t) \cdot Df^2(t) \cdots Df^{n-1}(t) \big]^{1/n} dt \\
&\geq& \sum_{k \in \mathbb{Z}} \! \int_{\! f^k (I)} \frac{e^{-V}}{| f^k (I) |} \big[ |f^k(I)| \cdot |f^{k+1}(I)| \cdots |f^{k+n-1}(I)| \big]^{1/n}\\
&=& e^{-V}  \sum_{k \in \mathbb{Z}} \big[ |f^k(I)| \cdot |f^{k+1}(I)| \cdots |f^{k+n-1}(I)| \big]^{1/n} \!\! .
\end{eqnarray*}\end{small}If we fix $k_0 \in \mathbb{Z}$ and take any point $x \in f^{k_0}(I)$, then this yields, for $a_k := | f^k (I) |$, 
$$D h_n (x) = \frac{\big[ Df(x) \cdot Df^2 (x) \cdots Df^{n-1}(x) \big]^{1/n}}{\int_0^1 \big[ Df(t) \cdot Df^2(t) \cdots Df^{n-1}(t) \big]^{1/n} dt} 
\leq \frac{e^{2V}}{a_{k_0}} \cdot \frac{[a_{k_0} \cdots a_{k_0 + n-1}]^{1/n}}{\sum_{k \in \mathbb{Z}} [a_k \cdots a_{k+n-1}]^{1/n}}.$$
Notice that, since $Df (0) = Df (1) = 1$, the value of $a_{k+1}/ a_{k}$ converges to $1$ as $k$ goes to $\pm \infty$. 
The claim of the proposition then follows from the elementary lemma below.
\end{proof}

\vspace{0.35cm}

\begin{lem} Let $a_k$ be positive numbers indexed by $k \!\in\! \mathbb{Z}$ whose total sum is finite. 
If $a_{k+1} / a_k$ converges to $1$ as $k$ goes to $\pm \infty$, then, for each $k_0 \in \mathbb{Z}$, the value of
$$\frac{[a_{k_0} \cdots a_{k_0 + n-1}]^{1/n}}{\sum_{k \in \mathbb{Z}} [a_k \cdots a_{k+n-1}]^{1/n}}$$
converges to zero as $n$ goes to infinity.
\end{lem}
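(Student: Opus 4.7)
The plan is to denote $P_k := [a_k \cdots a_{k+n-1}]^{1/n}$ (suppressing the dependence on $n$) so that $S_n = \sum_{k \in \Z} P_k$, and the goal becomes to show $P_{k_0}/S_n \to 0$. My strategy relies on a simple pigeonhole: if I can establish that for each fixed $j \in \Z$ the ratio $P_{k_0+j}/P_{k_0}$ tends to $1$ as $n \to \infty$, then for any $J \geq 1$ and $n$ large enough (depending on $J$), one has $S_n \geq \sum_{|j| \leq J} P_{k_0+j} \geq (2J+1) P_{k_0}/2$, hence $P_{k_0}/S_n \leq 2/(2J+1)$, and letting $J \to \infty$ finishes the proof.

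To analyze the ratio, a telescopic cancellation yields, for $0 \leq j \leq n$,
\[
\left(\frac{P_{k_0+j}}{P_{k_0}}\right)^n \,=\, \prod_{l=0}^{j-1} \frac{a_{k_0+n+l}}{a_{k_0+l}},
\]
with an analogous identity for $j < 0$. Taking logarithms and dividing by $n$, the terms $(\log a_{k_0+l})/n$ with $l$ in a fixed finite range vanish trivially as $n \to \infty$, so the question reduces to checking that $(\log a_{k_0+n+l})/n \to 0$ for each fixed $l$; and this in turn follows from the sublinearity statement $(\log a_k)/k \to 0$ as $|k| \to \infty$.

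This sublinearity is a Ces\`aro averaging consequence of the hypothesis $a_{k+1}/a_k \to 1$: writing $\log a_k - \log a_0 = \sum_{i=0}^{k-1} \log(a_{i+1}/a_i)$ for $k \geq 1$, the summands tend to $0$ by hypothesis, hence so does their arithmetic mean; a symmetric argument with the tail $\sum_{i=k}^{-1} \log(a_{i+1}/a_i)$ handles $k \to -\infty$. The only point where genuine care is needed is this Ces\`aro step, but it is standard and presents no real obstacle; the rest is routine bookkeeping, choosing $J$ large depending on $\varepsilon > 0$ and then $n$ large enough that each of the $2J+1$ ratios $P_{k_0+j}/P_{k_0}$ lies within a factor $2$ of $1$.
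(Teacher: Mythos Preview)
Your proof is correct and takes a genuinely different route from the paper's. Both arguments bound $P_{k_0}/S_n$ by showing that nearby terms $P_{k_0+j}$ are comparable to $P_{k_0}$, but they do this differently. The paper introduces the \emph{uniform} bound $c_n := \inf_k (a_{k+n}/a_k)$, observes that each ratio $P_{k+1}/P_k$ is at least $c_n^{1/n}$, sums the resulting geometric series over \emph{all} $k \geq k_0$ to get $P_{k_0}/S_n \leq 1 - c_n^{1/n}$, and then shows $c_n^{1/n} \to 1$ via a somewhat delicate argument. You instead work pointwise: for each \emph{fixed} $j$ you show $P_{k_0+j}/P_{k_0} \to 1$ via the Ces\`aro fact $(\log a_k)/k \to 0$, and then a finite pigeonhole with $2J+1$ terms suffices. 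Your approach is more elementary --- it avoids the uniform infimum and the geometric series, and the Ces\`aro step is standard --- while the paper's argument is more quantitative, giving an explicit rate $1 - c_n^{1/n}$ that depends only on the sequence and not on $k_0$. Either way, the hypothesis $a_{k+1}/a_k \to 1$ enters in essentially the same place: in your proof through sublinearity of $\log a_k$, in the paper's through control of $c_n^{1/n}$.
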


\begin{proof} 
Denote $c_n := \inf_k \big( a_{k+n} / a_k \big) < 1$. Then 
\begin{eqnarray*}
\frac{[a_{k_0} \cdots a_{k_0 + n-1}]^{1/n}}{\sum_{k \in \mathbb{Z}} [a_k \cdots a_{k+n-1}]^{1/n}} 
&\leq& 
\frac{[a_{k_0} \cdots a_{k_0 + n-1}]^{1/n}}{\sum_{k \geq k_0} [a_k \cdots a_{k+n-1}]^{1/n}}  \\
&=&
\frac{1}{1 + \left[ \frac{a_{k_0 + n}}{a_{k_0}} \right]^{1/n} + \ldots + \left[ \frac{a_{k_0 + n} \cdots a_{k_0 + n + i}}{a_{k_0} \cdots a_{k_0 + i}} \right]^{1/n} + \ldots } \\
&\leq&
\frac{1}{1 + c_n^{1/n} + (c_n^{1/n})^2 + (c_n^{1/n})^3 + \ldots} 
\,\,\,\,\, = \,\,\,\,\,
1-c_n^{1/n}.
\end{eqnarray*}
We are hence left to show that $c_n^{1/n}$ converges to $1$. To do this, fix an arbitrary positive constant $\delta < 1$. Then there exists a positive integer $N = N_{\delta}$ such that \, $a_{i+1} / a_i \geq \sqrt{\delta}$ \, for $i \notin [-N,N]$. Letting $C := \inf_i \big( a_{i+1} / a_i \big)$ this easily yields, for all $n \geq 2N + 1$ and all $k \in \mathbb{Z}$,
$$\frac{a_{k+n}}{a_k} \geq C^{2N+1} \, (\sqrt{\delta})^{n - (2N+1) }.$$
Hence,
$$\sqrt[n]{\frac{a_{k+n}}{a_k}} \geq \sqrt{\delta} \left( \frac{C}{\sqrt{\delta}} \right)^{(2N+1)/n}.$$
Therefore, for a large-enough $n$ (and all $k \in \mathbb{Z}$), we have  
$$\sqrt[n]{\frac{a_{k+n}}{a_k}} \geq \delta.$$
Thus, $c_n^{1/n} \geq \delta$ for all large-enough $n$. Since $\delta < 1$ was arbitrary, this shows the announced convergence \, $c_n^{1/n} \to 1$. 
\end{proof}

\vspace{0.5cm}

\begin{qsintro} Does Proposition \ref{derivada-interior} still hold for $C^1$ diffeomorphisms $f \in \mathrm{Diff}^{1,\Delta}_+([0,1])$~?
\end{qsintro}

\vspace{0.2cm}

\begin{rem} \label{only-one}
Proposition \ref{derivada-extremos} easily implies that the maps $h_n$ above have at least one fixed point in $(0,1)$. Whether or not they may have more fixed points inside (say, for large $n$) is unclear. It would be enlightening to do explicit computations starting with ``degenerate'' maps $f$, 
as those built by Sergeraert in \cite{sergeraert} or, more recently, by Polterovich and Sodin in \cite{Polt-Sod} and by the first-named author in \cite{eynard}. (This seems, however, rather difficult to implement.) A similar remark applies with respect to Proposition \ref{final-prop-extremos} further on.
\end{rem}

\vspace{0.1cm}

In a certain sense, Propositions \ref{derivada-extremos} and \ref{derivada-interior} reflect that the behaviour of the diffeomorphisms $h_n$ 
with respecto to $f$ somewhat mimics the behaviour inside $\mathrm{PSL} (2,\mathbb{R})$ of hyperbolic maps 
with respect to a parabolic map whose fixed point is the repelling fixed point of the hyperbolic ones. Indeed, using 
coordinates on the real line, after conjugacy these become  
$\tilde{h}: x \mapsto x / \lambda $ and $\tilde{f}: x \mapsto x+1$, for which one has 
$$\tilde{h} \circ \tilde{f} \circ \tilde{h}^{-1} (x) = x + \frac{1}{\lambda}.$$
Therefore, $\tilde{h} \circ \tilde{f} \circ \tilde{h}^{-1}$ converges to the identity if $\lambda \to \infty$. (The convergence in this case holds even in the real-analytic topology.) 

This is, however, just a heuristic viewpoint. Indeed, there is no reason to expect that, in the general case, conjugation by the map $h_n$ above will send $f$ to a root of itself, as it may happen that $f$ has no root that is more regular than $C^1$ (this occurs for instance for Sergeraert's examples already quoted). Besides, one can easily check that the conjugators $h_n$ for parabolic elements $f$ of $\mathrm{PSL} (2,\mathbb{R})$ are not contained in $\mathrm{PSL} (2,\mathbb{R})$, and hence differ from the maps $\tilde{h}$ above.


\section{On the Mather invariant}
\label{s:mather}

It was dealing with the question of simplicity of diffeomorphisms groups that Mather introduced his invariant as a tool to produce a first (positive) solution in the 1-dimensional case. However, he later turned to a more general construction (which is actually weaker in the 1-dimensional setting), namely the {\em transfer operator}, that allowed him to answer the simplicity question in full generality for compact boundaryless manifolds (with the only exception of the case where the differentiability class equals the dimension of the manifold plus 1, which is still open). This led him to the famous series of papers \cite{mather-1,mather-2,mather-3,mather-4}, yet the original construction from \cite{mather-original} remained unpublished. Yoccoz devoted part of his thesis to deeply study the Mather invariant. Our discussion below is much inspired by Yoccoz' work \cite{yoccoz}.

\vspace{0.3cm}

\noindent{\bf {\em Generating vector fields.}}  Let $f$ be a $C^r$ diffeomorphism of $[0,1)$, $r\ge2$, with no fixed point in the interior. According to Szekeres \cite{szekeres} and Kopell \cite{kopell}, there exists a unique vector field $X = X_f$ on $[0,1)$ such that: 

\vspace{0.1cm}

\noindent - $X$ is $C^1$ on $[0,1)$ (and extends continuously to $[0,1]$);

\vspace{0.1cm}

\noindent - $f$ is the time-1 map of the flow of $X$.

\vspace{0.1cm}

\noindent Moreover, the flow of $X$ coincides with the centralizer of $f$ in $\mathrm{Diff}^1_+ ([0,1))$. Note that if $f_t$, for $t\in\R$, denotes the time-$t$ map of $X$, then $Df_t(0)= \exp(t DX(0))$. In particular, if $f$ is $C^1$ tangent to the identity at $0$, then every element of its $C^1$ centralizer is. 

\vspace{0.1cm}

Furthermore, according to Sergeraert \cite{sergeraert}, $X$ is $C^{r-1}$ on $(0,1)$, but may be no more than $C^1$ at $0$. Indeed, he gives an example of a smooth $f$ for which only the integer times of the flow of $X$ are $C^2$; see also \cite{eynard-these} in this regard.

In the present article, \emph{the existence} of $X$ would be sufficient to carry out most of the proofs. However, having more information, as for instance the estimate \eqref{e:logX} below, helps shorten some proofs and, more importantly, allows generalizations to diffeomorphisms of lower regularity (these will be treated in \cite{article2}). \medskip

\noindent\textbf{Warning.} In order to reduce the amount of notations, in this article we will often identify a vector field $X$ on $[0,1]$ with the \emph{function} $dx(X)$, where $x$ denotes the coordinate on $[0,1]$. With this abuse, given a diffeomorphism $h$, the push-forward $h_*X$ will become the function $(Dh \cdot X)\circ h^{-1}$, and the pull-back $h^*X$ the function $\frac{X\circ h}{Dh}$.\medskip

The expression for $X$ can be made explicit as follows (see \cite{yoccoz} for the details):  assuming that $f (x) > x$ for all $x \in (0,1)$ and letting $\, \Delta (x) := f(x) - x \,$ be the {\em displacement function}, one has
$$X(x) = c_{0}(f) \lim_{n \to \infty} \Delta (f^{-n}(x)) \cdot Df^{n} (f^{-n} (x)) = c_0(f) \lim_{n \to \infty} \frac{\Delta (f^{-n}(x))}{Df^{-n} (x)},$$
where
$$c_{0} (f) =  \left \{
      \begin{array}{rcl}
          \frac{\log (Df(0))}{Df(0)-1}  & \mbox{if} & Df (0) \neq 1, \\
         1 & \mbox{if} & Df (0) = 1. 
      \end{array}
   \right .$$
The role of the constant $c_0$ is to ensure the condition
\begin{equation}\label{eq:int=1}
\int_x^{f(x)} \frac{ds}{ X(s)} = 1,
\end{equation}
which is necessary for $f$ to be the time-1 map of the flow of $X$. The convergence occurs in the $C^1$ topology on every compact subset of $[0,1)$. Notice that $X_n$ and $X$ do not vanish on $(0,1)$, so $1/X_n$ also converges uniformly towards $1/X$ on every compact subset of $(0,1)$. Assuming this, we claim that, for every $a \in (0,1)$,
\begin{equation}\label{e:logX}
\int_a^{f(a)}\left|\frac{DX-\log Df(0)}X\right|\le \var(\log Df ; [0,a]).
\end{equation}
Indeed, letting \, $X_n:= c_0(f) \frac{\Delta\circ f^{-n}}{Df^{-n}}$ \, so that 
$\log X_n = \log(\Delta\circ f^{-n})-\log(Df^{-n}),$ \, we have 
\begin{small}
\begin{align*}
\frac{DX_n}{X_n} = D\log X_n= \frac{(D\Delta)\circ f^{-n}\cdot Df^{-n}}{\Delta\circ f^{-n}}-D\log(Df^{-n})=\frac{c_0(f)(D\Delta)\circ f^{-n}}{X_n}-D\log(Df^{-n}).
\end{align*}
\end{small}Therefore, 
\begin{align*}
\int_a^{f(a)}\left|\frac{DX_n-c_0(f)(D\Delta)\circ f^{-n}}{X_n}\right|&=\int_a^{f(a)}\left|D\log Df^{-n}\right|\\
&=\var \big(\log Df^{-n} ; [a,f(a)] \big)\\
&\le \sum_{i=0}^{n-1}\var \big( \log Df^{-1}\circ f^{-i};[a,f(a)] \big)\\
&= \sum_{i=0}^{n-1}\var \big( \log Df^{-1};[f^{-i}(a),f^{-i+1}(a)] \big)\\
&\le\var \big( \log Df^{-1};[0,f(a)] \big) = \var \big( \log Df;[0,a] \big),
\end{align*}
and \eqref{e:logX} follows by passing to the limit just noticing that $D\Delta\circ f^{-n}$ 
converges uniformly to $D\Delta(0)=Df(0)-1$ on $[a,f(a)]$. 

A useful consequence of \eqref{e:logX} and the equality 
$$\big| \var(\log X ; [a,f(a)]) - \log Df(0) \big|
= \left| \int_a^{f(a)}\left|\frac{DX}X\right| - \int_a^{f(a)} \left| \frac{\log Df(0)}{X}\right| \right| $$
is the estimate
\begin{equation}\label{e:logX2}
\big| \var(\log X ; [a,f(a)]) - \log Df(0) \big| \leq \var (\log Df;[0,a]).
\end{equation}

\medskip

\noindent{\bf {\em Mather invariant as an obstruction to ``flowability''.}} Let now $f$ be a $C^r$ diffeomorphism of the interval $[0,1]$, $r\ge2$, with no fixed point in the interior. The results discussed above, applied to the restrictions of $f$ to $[0,1)$ and $(0,1]$ respectively, provide two ``generating vector fields'' $X$ and $Y$ for $f$ (meaning that $f$ is the time-$1$ map of both), of class $C^1$ on $[0,1)$ and $(0,1]$, respectively. Following Yoccoz \cite{yoccoz}, we will denote by $f_t$ and $f^t$ the time-$t$ maps of the respective flows (so that $f_1=f^1=f$). These are homeomorphisms of $[0,1]$ which restrict to $C^1$ diffeomorphisms of $[0,1)$ and $(0,1]$, respectively, and to $C^{r-1}$-diffeomorphisms of $(0,1)$.

The vector fields $X$ and $Y$ do not necessarily coincide, and the Mather  invariant measures this defect of coincidence. Given points $a,b$ in $(0,1)$, consider the $C^{r-1}$ diffeomorphisms $\psi_X : t\mapsto f_t(a)$ and $\psi_Y : t\mapsto f^t(b)$ from $\mathbb{R}$ into $(0,1)$, and the change of coordinates 
$$M^{a,b}_f := (\psi_Y)^{-1} \circ \psi_X : \mathbb{R} \to \mathbb{R}.$$ 
Notice that 
$$ D M^{a,b}_f (t) 
= \frac{\frac{d \psi_X}{dt}(t)}{\frac{d \psi_Y}{dt} (\psi_Y^{-1} \psi_X (t))} 
= \frac{X (\psi_X (t))}{Y (\psi_Y ( \psi_Y^{-1} \psi_X (t)))},$$
hence 
\begin{equation}\label{derivada-de-M}
D M^{a,b}_f (t) = \frac{X}{Y}( \psi_X (t)).
\end{equation}

The fact that $f$ is the time-1 map of the flows of $X$ and $Y$ implies that $M^{a,b}_f$ commutes with the translation by 1. Hence, it induces a diffeomorphism of the circle $\R/\Z$, which is $C^r$ by (\ref{derivada-de-M}) above, and which we still denote by $M^{a,b}_f$. 

Changing $a$ and $b$ translates into pre/post composition of $M^{a,b}_f$ by rotations. The class of $M^{a,b}_f$ modulo these $\mathrm{SO}(2,\mathbb{R})$-actions (on the left and right) is the Mather invariant of $f$, that we just denote by $M_f$. One says that the Mather invariant is trivial if $M_f$ coincides with the class of rotations. In view of the discussion above, this amounts to saying that $X$ and $Y$ coincide, which is equivalent to that $f$ arises as the time-1 map of the flow of a $C^1$ vector field of $[0,1]$. 

\vspace{0.3cm}

\begin{proof}[Proof of Corollary \ref{c:simult} (assuming Theorem \ref{t:vanish})] Let $f,g$ be commuting elements in $\mathrm{Diff}_+^{2,\Delta} ([0,1])$. Denote by $X_f,Y_f$ (resp. $X_g,Y_g$) the Szekeres vector fields of $f$ (resp. $g$). Since $g$ belongs to the centralizer of $f$ in $\mathrm{Diff}_+^{1} ([0,1])$, it is a flow map of both $X_f$ and $Y_f$ (as a consequence of Kopell's lemma \cite{kopell}, as recalled at the beginning of the section). In other words, $g$ is the time-$1$ map of $\lambda X_f$ and $\lambda Y_f$, with the same  $\lambda \neq 0$, which is nothing but the ``relative translation number'' of $g$ with respect to $f$. By uniqueness of the generating vector fields, we thus have $X_g = \lambda X_f$ and $Y_g = \lambda Y_f$. 

Now according to Proposition \ref{primera-prop}, if $f$ has vanishing asymptotic distortion, its Mather invariant is trivial and both endpoints are parabolic for $f$. This is equivalent to saying that $X_f=Y_f$ and $DX_f(0)=DY_f(1)=0$. Then, according to the above discussion, the same holds for $X_g$ and $Y_g$, and Proposition \ref{segunda-prop} shows that the asymptotic distortion of $g$ also vanishes.
\end{proof}
 
\vspace{0.2cm}

\begin{rem} It is not hard to extend Corollary \ref{c:simult} to commuting diffeomorphisms with interior fixed points provided neither of them acts nontrivially 
on an interval (of positive length) on which the other one acts trivially. (In particular, it holds for any pair of real-analytic commuting diffeomorphisms.) Indeed, in this situation, one can show using Kopell's lemma \cite{kopell} that the two diffeomorphisms have exactly the same fixed points, and the proof then follows by applying the result above on each interval that is fixed by the two diffeomorphisms with no fixed point inside. We leave the details to the reader.
\end{rem}

\vspace{0.1cm}

The next lemma describes a prototypical case where a nontrivial Mather invariant arises. 
Roughly speaking, any $C^2$ perturbation supported on a {\em fundamental interval} does the job.

\vspace{0.1cm}

\begin{lem}
\label{break}
Let $f\in \Diff^{2,\Delta}_+([0,1])$ and $p \in(0,1)$. If the Mather invariant of $f$ is trivial, then any $g\in \Diff^{2,\Delta}_+([0,1])$ that is equal to $f$ outside the interval of endpoints $p,f(p)$ and different from $f$ on this interval has a nontrivial Mather invariant.
\end{lem}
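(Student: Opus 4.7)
The plan is a proof by contradiction: assume that $g$ also has trivial Mather invariant and deduce that $f=g$ on $[0,1]$, contradicting $f\neq g$ on $(p,f(p))$. Under this assumption, $f$ and $g$ arise as time-$1$ maps of flows of $C^1$ vector fields on $[0,1]$, say $X$ and $Z$ respectively, satisfying the invariance relations $X\circ f = Df\cdot X$ and $Z\circ g = Dg\cdot Z$. It then suffices to show that $Z=X$ on $[0,1]$.

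I would first prove that $Z=X$ on $[0,p]$ and, symmetrically, on $[f(p),1]$. Since $g=f$ on $[0,p]$, the germs of $f$ and $g$ at $0$ agree, and by uniqueness of the Szekeres vector field associated to a parabolic germ, the germs of $X$ and $Z$ at $0$ also agree; hence $Z=X$ on some $[0,\varepsilon]$. To propagate this equality to all of $[0,p]$, for any $y\in[0,p]$ I would consider the backward $f$-orbit $f^{-n}(y)$, which stays in $[0,p]$ and converges to $0$, and iterate the invariance relations along this orbit, using that $g=f$ and $Dg=Df$ on $[0,p]$. The symmetric argument from the $1$-side, using $g=f$ on $[f(p),1]$, then yields $Z=X$ on $[f(p),1]$.

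The extension to the fundamental interval $I=[p,f(p)]$ is now a short computation that forms the heart of the argument: for any $x\in[f^{-1}(p),p]\subset[0,p]$ one has $Z(x)=X(x)$ by the previous step, together with $g(x)=f(x)$ and $Dg(x)=Df(x)$, so
$$Z(f(x))=Z(g(x))=Dg(x)\,Z(x)=Df(x)\,X(x)=X(f(x)).$$
As $x$ ranges over $[f^{-1}(p),p]$, the point $f(x)$ sweeps out all of $I$, hence $Z=X$ on $I$. Combining the three pieces, $Z=X$ on $[0,1]$, the time-$1$ maps of their flows coincide, and $f=g$ on $[0,1]$, contradicting the hypothesis.

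The only slightly subtle point is the propagation of $Z=X$ from a germ at $0$ to all of $[0,p]$ in the first step; a more robust alternative is to use the explicit Szekeres formula $X(x)=c_0(f)\lim_n \Delta(f^{-n}(x))\cdot Df^n(f^{-n}(x))$ recalled earlier and observe that the backward $f$-iterates of any $x\in[0,p]$ remain in $[0,p]$, where $f$ and $g$ agree together with their derivatives, so the same limit computes $X_f(x)$ and $X_g(x)$.
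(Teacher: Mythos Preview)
Your proof is correct and follows essentially the same approach as the paper's: assume by contradiction that $g$ has trivial Mather invariant, show the generating vector fields of $f$ and $g$ coincide on all of $[0,1]$, and conclude $f=g$. The only difference is organizational: the paper directly asserts that $X_g=X_f$ on $[0,f(p)]$ (since $g=f$ on $[0,p]$ forces all backward $g$-iterates of any point in $[0,f(p)]$ to lie in $[0,p]$ where $g$ and $f$ agree) and $Y_g=Y_f$ on $[f(p),1]$, whereas you first establish $Z=X$ on $[0,p]$ and $[f(p),1]$ and then use the invariance relation to extend across the fundamental domain $[p,f(p)]$. Your extension step is precisely what the paper absorbs into its one-line claim about $[0,f(p)]$.
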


\begin{proof} To fix ideas, assume $f(x)>x$ on $(0,1)$. Since $g$ (resp. $g^{-1}$) coincides with $f$ on $[0,p]$ (resp. with $f^{-1}$ on $[f^2(p),1]$), $X_g$ (resp. $Y_g$) coincides with $X_f$ (resp. with $Y_f=X_f$) on $[0,f(p)]$ (resp. $[f(p),1]$). If $g$ had a trivial Mather invariant, one would have $X_g=Y_g=X_f$ on the whole segment $[0,1]$, so $f$ would be equal to $g$ everywhere, which is not the case.
\end{proof}

\begin{rem}\label{r:mather-local}
In the above setting, there is actually a very simple expression for $M_g$, or rather its representative $M^{p,p}_g$, in terms of the perturbation $h$ (supported in $[p,f(p)]$) applied to $f$ to obtain $g$ ({\em i.e.} such that $g=fh$). Namely, letting (as before) $\psi_{X_f} (t) = f_t(p)$, one has 
$$M^{p,p}_g = \psi_{X_f}^{-1}\circ h \circ \psi_{X_f}.$$ 
In other words, $M^{p,p}_g$ is nothing but $h$ seen in the natural coordinates in which $f$ becomes the unit translation. More generally, if one removes the assumption of vanishing Mather invariant for $f$, one has
$$M^{p,p}_g = M^{p,p}_f\circ \psi_{X_f}^{-1}\circ h \circ \psi_{X_f}=\psi_{Y_f}^{-1}\circ h \circ \psi_{X_f}.$$  
Indeed, first observe that, since $X_f=X_g$ on $[p,f_1(p)]$, for $t \in [0,1]$ one has 
$$\psi_{X_f}(t)=f_t(p)=g_t(p)=\psi_{X_g}(t).$$ 
Similarly, $\psi_{Y_f} (t) = \psi_{Y_g} (t)$ for $t \in [1,2]$. Notice that these maps send $[1,2]$ into $[f(p),f^2(p)]$.
Recall furthermore that $\psi_{X_f}$ and $\psi_{Y_f}$ (resp. $\psi_{X_g}$ and $\psi_{Y_g}$) conjugate the unit translation $T_1$ to $f$ (resp. $g$). Hence, on $[0,1]$,
\begin{align}\label{e:comput}
\psi_{Y_f}^{-1}\circ h \circ \psi_{X_f} &= \psi_{Y_f}^{-1}\circ (f^{-1}\circ g) \circ \psi_{X_g}\notag \\
&= T_{-1} \circ \psi_{Y_f}^{-1}\circ g\circ \psi_{X_g}= T_{-1} \circ \psi_{Y_g}^{-1}\circ \psi_{X_g}\circ  T_1=T_{-1}\circ  M_g^{p,p} \circ T_1=M_g^{p,p}.
\end{align}

Conversely, every $C^2$ diffeomorphism $\Phi$ of the real line commuting with the unit translation and $C^2$-tangent to the identity at $0$ is (a representative of) the Mather invariant of some $g\in \Diff^{2,\Delta}_+([0,1])$. Indeed, just take $f$ and $p$ as above, define $h := \psi_{X_f}\circ \Phi \circ \psi_{X_f}^{-1}$ on $[p,f(p)]$, and finally let $g$ be equal to $fh$ on $[p,f(p)]$ and to $f$ elsewhere. (The ``boundary'' hypothesis on $\Phi$ ensures the regularity of $g$.) \medskip
\end{rem}

\begin{rem}\label{r:mather-local-2}
Following \cite{mather-original}, we now adapt the construction above in order to realize \emph{any} $C^2$ circle diffeomorphism  as (a member of the class of) the Mather invariant of a certain $g\in \Diff^{2,\Delta}_+([0,1])$. In particular, we thus recover the surjectivity of the Mather invariant proved by Yoccoz in \cite{yoccoz} (which together with Theorem \ref{t:fund-ineq} implies the surjectivity of $\dist$, as mentioned in the Introduction). However, our motivation for presenting this explicit construction is rather the content of Remark \ref{r:correction} below. 

Consider a $C^2$ circle diffeomorphism $\phi$ which, up to composing with a rotation, may be assumed to fix $0\in\R/\Z$. Let $\phi_0$ be a circle diffeomorphism coinciding with $\phi$ near $0$ and with the identity near $1 / 2$. Let now $\phi_{1/2} := \phi_0^{-1}\phi$, which thus coincides with $\phi$ near $1/2$ and with the identity near $0$. Now lift $\phi_0$ (resp. $\phi_{1/2}$) to a diffeomorphism of $[\frac12,\frac32]$ 
(resp. $[-1,0]$) by ``opening the circle'' at $1/2$ (resp. $0$), and extend it \emph{by the identity} to a $C^2$ diffeomorphism $\Phi_0$ (resp. $\Phi_{1/2}$) of the real line. Finally, take $f\in\Diff^{2,\Delta}_+([0,1])$ with a trivial Mather invariant and $p\in(0,1)$, consider the associated maps $\psi_{X_f}$ and $\psi_{Y_f}$, and let \, $h_i := \psi_{X_f}\circ \Phi_i \circ \psi_{X_f}^{-1}$ \, for $i=0$ and $1/2$. We claim that the Mather invariant of \, $g := h_0fh_{1/2}$ \, is the class of $\phi$ modulo rotations. 

More precisely, we claim that the $1$-periodic extension of the homeomorphism 
\begin{equation}\label{long}
T_{-3}\circ (\Phi_0\circ T_1\circ \Phi_{1/2})^3
\end{equation}
of $[-1,0]$ is a lift of $\phi$ to $\R$ and a representative of $M_g$. The first claim follows from the definitions of $\Phi_0$ and $\Phi_{1/2}$ and the decomposition of $\phi$ as $\phi_0\phi_{1/2}$, noticing that $\Phi_i$ is often equal to the identity in the long composition (\ref{long}) above. The proof of the second claim is similar to the above computation \eqref{e:comput}. Indeed, observing that $\psi_{X_g}=\psi_{X_f}$ on $[-1,0]$ and $\psi_{Y_g}=\psi_{Y_f}=\psi_{X_f}$ on $[2,3]=(g^3\circ \psi_{X_f})([-1,0])$, one obtains
\begin{align*}
T_{-3}\circ (\Phi_0\circ T_1\circ \Phi_{1/2})^3&=
T_{-3}\circ \psi_{X_f}^{-1}\circ (h_0\circ f\circ h_{1/2})^3\circ \psi_{X_f}\\
&=T_{-3}\circ \psi_{Y_g}^{-1}\circ g^3\circ \psi_{X_g}\\
&=\psi_{Y_g}^{-1}\circ g^{-3}\circ g^3\circ \psi_{X_g} = M_g^{p,p}.
\end{align*}
\end{rem}


\begin{rem}
\label{r:correction} In the remark above, we started with an $f$ with trivial Mather invariant and perturbed it locally (on the union of three consecutive fundamental intervals) to obtain a $g$ with a prescribed (nontrivial) Mather invariant. Conversely, one can start with \emph{any} $f\in\Diff^{2,\Delta}_+([0,1])$ and perform an explicit local perturbation to adjust the Mather invariant as desired, in particular to cancel it. Indeed, given $p\in(0,1)$, define the circle diffeomorphism $\phi$ so that $\phi^{-1}$ is a representative of $M_f^{p,p}$ on the circle, and construct $g$ just as above. In the last computation, $\psi_{X_f}^{-1}$ must now be replaced by $(M_f^{p,p})^{-1}\psi_{Y_f}^{-1}=(M_f^{p,p})^{-1}\psi_{Y_g}^{-1}$ rather than just $\psi_{Y_g}^{-1}$, and one thus gets 
$$T_{-3}\circ (\Phi_0\circ T_1\circ \Phi_{1/2})^3=(M_f^{p,p})^{-1} M_g^{p,p}.$$
Again, the expression on the left is a lift of $\phi$ to the real line fixing $0$, while $M_f^{p,p}$ is a lift of $\phi^{-1}$. Thus, in the end, $M_g^{p,p}=\id$.
 \end{rem}

\noindent{\bf {\em Mather invariant and conjugacy.}} The Mather \emph{invariant} is thus named because it is invariant under $C^1$ conjugacy (among $C^2$ diffeomorphisms), as it readily follows from the definition. More importantly, together with the $C^1$ conjugacy classes of the germs at the endpoints, it constitutes a \emph{complete} $C^1$ conjugacy invariant: for $r \geq 2$, two elements of $\Diff^{r,\Delta}_+([0,1])$ are $C^1$ conjugated if and only if their germs at the endpoints are $C^1$ conjugated and they have the same Mather invariant \cite{yoccoz}. Unfortunately, $C^1$ cannot be replaced by $C^r$ in the previous statement, as will be shown in Remark \ref{r:Cr-conj} below. Nevertheless, the following \emph{sufficient condition} for $C^r$ conjugacy will be useful later in this article. 

\vspace{0.1cm}

\begin{lem} \label{l:Cr-conj} 
Let $f$ and $g$ in $\Diff^{r,\Delta}_{+}([0,1])$ coincide near $0$ and $1$ and have a trivial Mather invariant, where $r \geq 2$. If, for some $a\in(0,1)$, the orbits of $a$ under $f$ and $g$ coincide near $0$ and $1$, {\em or} if the generating vector fields of $f$ and $g$ are $C^r$ near $0$ {\em or} $1$, then $f$ and $g$ are $C^r$ conjugated. 
\end{lem}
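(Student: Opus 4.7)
Triviality of $M_f$ and $M_g$ implies that $f$ and $g$ are the time-$1$ maps of unique $C^1$ vector fields $X_f, X_g$ on $[0,1]$. Uniqueness of the generating vector field and the assumption $f=g$ near $0$ and $1$ force $X_f=X_g$ on two neighborhoods $V_0 \ni 0$ and $V_1 \ni 1$. My candidate conjugator will be, for a base point $a\in(0,1)$ to be chosen according to the case,
$$h:=\psi_{X_g}\circ \psi_{X_f}^{-1}, \qquad \psi_{X_f}(t):=f_t(a),\quad \psi_{X_g}(t):=g_t(a).$$
By Sergeraert's $C^{r-1}$-regularity of $X_f, X_g$ on $(0,1)$ and standard ODE bootstrap (integration in $t$ gains one derivative), the maps $\psi_{X_f}, \psi_{X_g}:\R\to(0,1)$ are $C^r$-diffeomorphisms, so $h$ is a $C^r$-diffeomorphism of $(0,1)$. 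The semiconjugacy relations $\psi_{X_f}(t+1)=f\circ\psi_{X_f}(t)$ and $\psi_{X_g}(t+1)=g\circ\psi_{X_g}(t)$ give $h\circ f=g\circ h$ at once, so the entire content of the proof is to choose $a$ so that $h$ extends $C^r$-smoothly to $[0,1]$.

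In the first case, I take the $a$ given in the hypothesis. Since $V_0$ is backward-invariant for both flows, once the discrete orbit of $a$ enters $V_0$ the equality $f^{-n}(a)=g^{-n}(a)$ propagates to continuous time by flowing with the common vector field on $V_0$, yielding $\psi_{X_f}(t)=\psi_{X_g}(t)$ for all sufficiently negative $t$; a symmetric argument on $V_1$ handles positive times. Hence $h=\id$ in neighborhoods of both endpoints, and combined with $C^r$-smoothness on the interior this immediately furnishes the $C^r$-extension of $h$ to $[0,1]$.

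In the second case, assume for concreteness that $X_f$ and $X_g$ are $C^r$ near $0$, and choose $a\in V_1$. Forward invariance of $V_1$ under the common flow gives $\psi_{X_f}(t)=\psi_{X_g}(t)$ for all $t\ge 0$, so $h=\id$ near $1$. Near $0$, for sufficiently negative $t$ both $\psi_{X_f}(t)$ and $\psi_{X_g}(t)$ lie in $V_0$ and are integral curves of the \emph{same} $C^r$ vector field $X_f=X_g$; being integral curves of an autonomous ODE, they must differ by a time-shift, $\psi_{X_g}(t)=\psi_{X_f}(t-\tau)$ for some constant $\tau\in\R$. This identifies $h(x)=f_{-\tau}(x)$ for $x$ close to $0$, where $f_{-\tau}$ is the time-$(-\tau)$ flow of the common vector field, and classical smooth dependence of ODE solutions on initial conditions then yields $C^r$-regularity of $h$ at $x=0$. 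The situation in which the vector fields are $C^r$ near $1$ is symmetric (pick $a\in V_0$ and flow backward instead). The main obstacle is precisely this identification of $h$ with a flow map near the non-identity endpoint in the second case: this is what makes the $C^r$-hypothesis on the vector field usable, whereas the interior $C^r$-smoothness and the ``identity near endpoints'' argument of the first case are comparatively routine once the uniqueness of the generating vector field is in hand.
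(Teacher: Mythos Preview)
Your second case is correct and essentially mirrors the paper's argument, with the roles of the endpoints swapped: you anchor $a$ near one endpoint so that $h=\id$ there, and identify $h$ with a flow map of the common vector field near the other endpoint, which is $C^r$ by the regularity hypothesis.

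Your first case, however, has a genuine gap. The hypothesis ``the orbits of $a$ under $f$ and $g$ coincide near $0$ and $1$'' means that the \emph{sets} $\{f^n(a)\}$ and $\{g^n(a)\}$ agree near each endpoint; it does \emph{not} say that $f^{-n}(a)=g^{-n}(a)$ for large $n$. From the set equality near $0$ and the relation $f=g$ on $V_0$, what you actually get is an integer shift: $f^{-n}(a)=g^{-n+c}(a)$ for some fixed $c\in\Z$ and all large $n$. Consequently $\psi_{X_g}(t)=\psi_{X_f}(t+c)$ for large negative $t$, so $h=f^{c}$ (not the identity) on a neighborhood of $0$, and similarly $h=f^{c'}$ near $1$ for some other integer $c'$. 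This still gives the $C^r$ extension, since integer powers of $f$ are $C^r$, but your stated reason (``$h=\id$ near both endpoints'') is wrong in general.

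The paper handles this more cleanly by replacing $a$ with a point of its orbit lying in $V_0$; then $X_f=X_g$ on $[0,f(a))$ forces $\psi_f=\psi_g$ on $(-\infty,0]$, so $h=\id$ on $[0,a]$ automatically. Near $1$ the paper argues (as you do in your second case) that $h$ preserves $X_f$ and is therefore a flow time $f^{t_0}$; the orbit condition then forces $t_0\in\Z$, whence $h$ is $C^r$ there as well. The key step you are missing in the orbit case is precisely this: identifying $h$ as a flow map near the ``hard'' endpoint and then using the orbit coincidence to pin down the time as an integer.
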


\begin{proof} Without loss of generality (replacing $f$ and $g$ by $f^{-1}$ and $g^{-1}$ if necessary), we can assume that $f (x) > x$ for all $x \in (0,1)$. If the orbit condition is satisfied, let $a$ be as in the statement. Otherwise, fix any $a\in(0,1)$, and assume, to fix ideas, that the Szekeres vector fields of $f$ and $g$ are $C^r$ near $1$. Picking another point of its orbit if necessary, we assume that $a$ and $f(a)$ belong to an interval containing $0$ on which $f$ and $g$ coincide. Let $X_f$ and $X_g$ be the Szekeres vector fields of $f$ and $g$, respectively (since the Mather invariants are trivial, both $f$ and $g$ have a single Szekeres vector field, which is $C^1$ on $[0,1]$ and $C^{r-1}$ on $(0,1)$). Denote by $(f^t)_{t\in\R}$ and $(g^t)_{t\in\R}$ their flows, and consider the two $C^r$-diffeomorphisms from $\R$ to $(0,1)$ defined by $\psi_f:t\mapsto f^t(a)$ and $\psi_g:t\mapsto g^t(a)$. (The $C^r$ regularity of $\psi_f$ comes from the fact that $\psi_f$ is $C^{r-1}$ since $X_f$ is $C^{r-1}$ on $(0,1)$, and thus $d\psi_f / dt = X_f \circ \psi_f$ is $C^{r-1}$; similar arguments apply to $\psi_g$.) These respectively conjugate $f$ and $g$ to the unit translation on $\R$. 

Hence, the map $\psi=\psi_g\circ \psi_f^{-1}$ is a $C^r$ conjugacy from $f$ to $g$ (\emph{i.e.} $\psi \circ f \circ \psi^{-1}=g$) on $(0,1)$. We still denote by $\psi$ its extension as a homeomorphism of $[0,1]$. Since $f$ and $g$ coincide on $[0,f(a))$, so do their Szekeres vector fields. Thus, $\psi_g=\psi_f$ on $(-\infty,0]$ (onto $(0,a]$), and hence $\psi=\id$ on $[0,a]$. Therefore, $\psi$ is $C^r$ on $[0,1)$. 

Now from $\psi \circ f \circ \psi^{-1} = g$ and the uniqueness of Szekeres vector fields, one actually gets $\psi_*(X_f) = X_g$. Since $X_f=X_g$ near $1$ (recall these are also the right Szekeres vector fields of $f$ and $g$, which coincide near $1$!), this gives 
$\psi_*(X_f) = X_f$ near $1$. This is equivalent to saying that $\psi$ belongs to the flow of $X_f$, that is, $\psi = f^{t_0}$ near $1$ for some $t_0 \in \R$.  

If $X_f$ is $C^r$ near $1$, we are done. Otherwise, equality $\psi = f^{t_0}$ (near 1) already shows that $\psi$ is $C^1$ near $1$. To conclude the proof, it suffices to show that, under the orbit condition, $t_0$ must be an integer, and thus $\psi$ is $C^r$ near $1$ because it is an iterate of $f$ or its inverse. Indeed, $\psi$ sends the orbit of $a$ under $g$ to the orbit of $\psi(a)=a$ under $f$. In particular, if the two orbits coincide in a neighborhood of $1$, the map $\psi$ sends each $f^n(a)$ therein to some point $f^m(a)$. This implies that $t_0 = m-n\in\Z$, as we wanted to check. 
\end{proof}

\vspace{0.1cm}

\begin{rem}\label{r:Cr-conj}
Unfortunately, the statement of Lemma \ref{l:Cr-conj} is no longer true if one replaces ``have a trivial Mather invariant'' by ``have the same Mather invariant''. As already mentioned, Mather's theory says that, in this case, $f$ and $g$ are $C^1$-conjugated, and the conjugacy, which can be described explicitly, is at least as smooth as the Szekeres vector fields of $f$ and $g$. However, these can be just $C^1$ at the endpoints, and in this case the conjugacy may be no more regular than $C^1$.  

Here is an example where this fact actually happens: Start with the time-$1$ map $f$ of a $C^1$ vector field $X$ that is smooth and positive on $(0,1)$ and coincides near $0$ and $1$ with a ``Sergeraert counterexample''. The latter means that the only $C^2$ times of its local flows are the integers; see \cite{eynard-these, sergeraert}. Fix $a\in(0,1)$, and perturb $f$ locally on $I:= [a,f(a)]$ to obtain a new diffeomorphism $\tilde f$ equal to $f$ outside $I$ and to $ f\circ \phi$ on $I$, for some smooth diffeomorphism $\phi$ of $[0,1]$ supported on $I$. According to Lemma \ref{break}, 
the Mather invariant of $\tf$ is nontrivial. We will denote by $\tf_t$ and $\tf^t$ the time-$t$ maps associated to the left and right Szekeres vector fields of $\tf$, denoted $\tilde{X}$ and $\tilde{Y}$, respectively. We will need an additional assumption on $\phi$, namely, that $\phi$ fixes $f^\alpha(a)$ for some irrational $\alpha\in(0,1)$.  

Now let $g := f^{-\alpha}\circ \tf\circ f^\alpha$. One has $\tf=f=g$ near the endpoints (in particular, though $f^\alpha$ is only $C^1$, $g$ is smooth on $[0,1]$), and $M_{\tf} = M_g$ since $\tf$ and $g$ are $C^1$-conjugated. Besides, the equality $\tf=f$ on $[0,1]\setminus I$ implies the equality of $X$ and $\tX$ on $[0,f(a)]$ and of $X$ and $\tY$ on $[f(a),1]$. As a consequence, $f^{\alpha} = \tf_{\alpha}$ on $[0,a]$, and $f^{-\alpha} = \tf^{-\alpha}$ on $[f^{1+\alpha}(a),1]$. Let us now check that the orbits of $a$ under $\tf$ and $g$ respectively coincide near $0$ and $1$. For this, it is sufficient to have $\tf_{\alpha}(a)=\tf^{\alpha}(a)$. Indeed, if so, for all $n \geq 1$,
$$g^n(a)=(f^{-\alpha}\tf^n f^\alpha)(a)= (\tf^{-\alpha}\tf^n \tf_\alpha)(a)=(\tf^n\tf^{-\alpha}\tf_\alpha)(a)=\tf^n(a).$$ 

Now equality $\tf_{\alpha}(a)=\tf^{\alpha}(a)$ comes from the additional hypothesis on $\phi$. Indeed, on $(0,1]$, there is a unique $C^\infty$ diffeomorphism $\Phi$ equal to the identity on $[f(a),1]$ such that $\tf=\Phi^{-1}\circ f\circ \Phi$. Namely, $\Phi$ is defined as $\Phi = f^{-n}\phi f^n$ on $[f^{-n}(a),f^{-n+1}(a)]$ for all $n \geq 0$ (in particular, $\Phi$ fixes the whole orbit of $a$). Then $\Phi$ necessarily conjugates the right Szekeres vector fields of $f$ and $\tf$ and their flows. In particular, 
$$\tf^\alpha(a)=(\Phi^{-1}\circ f^\alpha\circ \Phi)(a)=(\Phi^{-1}\circ f^\alpha)(a) = \phi^{-1}(f^\alpha(a))=f^\alpha(a)=\tf_\alpha(a).$$

Let us finally show that $g$ is not $C^2$-conjugated to $\tf$. If there existed such a conjugacy $h$, then $f^\alpha h^{-1}$ would commute with $\tf$, and since this composition is $C^1$, it would be an element of both Szekeres flows of $\tf$, say $f^{\alpha} h^{-1} = \tf^t = \tf_{t'}$. From $\tf^1 = \tf_1 = \tf$, it is not hard to see that $t=t'$, hence $f^\alpha=\tf^t\circ h=\tf_t\circ h$. In particular, near $0$, one would have $h=f_{\alpha - t}$, and near $1$, one would have $h=f^{\alpha - t}$. Since we are assuming that for both germs of flow of $f$ the only smooth times are the integers, $\alpha - t$ must be an integer, and thus $t$ must be irrational. But then $\tf^t=\tf_t$ and $\tf^1=\tf_1$ would imply that the left and right Szekeres flows coincide, 
which contradicts the non-triviality of the Mather invariant of $\tf$. 
\end{rem}


\section{Trivial Mather invariant: a proof of Theorem \ref{t:vanish}}
\label{s:vanish}

We next proceed with a direct proof of Theorem \ref{t:vanish} that will give us some insight to later proceed with the (more general) Theorem \ref{t:fund-ineq}. We start with the following implication:
\vspace{0.2cm}

\begin{prop} \label{primera-prop} 
Let $f$ be an element of $\mathrm{Diff}^{2,\Delta}_+ ([0,1])$. If the asymptotic distortion of $f$ vanishes, then $0$ and $1$ are parabolic fixed points for $f$, and its Mather invariant is trivial.
\end{prop}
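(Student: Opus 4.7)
My plan is to handle parabolicity first by a direct estimate on $\var(\log Df^n)$, and then to use the distortion-minimizing conjugators of Proposition \ref{doble-criterio} to reduce the Mather-invariant statement to diffeomorphisms of arbitrarily small total variation.

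For parabolicity, I would observe that $\log Df^n(0) = n \log Df(0)$, while the normalization $\int_0^1 Df^n = 1$ forces, by the intermediate value theorem, some $x_n \in (0,1)$ with $Df^n(x_n) = 1$. Hence
$$\var(\log Df^n) \geq \big|\log Df^n(0) - \log Df^n(x_n)\big| = n \big|\log Df(0)\big|.$$
Dividing by $n$ and letting $n \to \infty$ would give $\dist(f) \geq |\log Df(0)|$, and the symmetric argument at $1$ would yield $\dist(f) \geq |\log Df(1)|$. So $\dist(f) = 0$ forces both endpoints to be parabolic.

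Assuming now $Df(0) = Df(1) = 1$, I would fix $\varepsilon > 0$ and use Proposition \ref{doble-criterio} to choose some $h = h_n$ from (\ref{def-h_n}) (which is $C^2$ since $f$ is) so that $g := h f h^{-1}$ lies in $\Diff^{2,\Delta}_+([0,1])$ and satisfies $\var(\log Dg) < \varepsilon$. Conjugation by $h \in \mathrm{Diff}^1_+$ preserves both the Mather invariant ($M_g = M_f$) and the parabolicity of the endpoints. I would then apply (\ref{e:logX2}) to the left Szekeres vector field $X_g$, and its mirror-image counterpart (obtained by conjugating $g$ by $x \mapsto 1 - x$) to the right Szekeres vector field $Y_g$, on a common fundamental interval $[a, g(a)]$. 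Parabolicity gives
\begin{align*}
\var(\log X_g; [a, g(a)]) &\leq \var(\log Dg; [0, a]),\\
\var(\log Y_g; [a, g(a)]) &\leq \var(\log Dg; [g(a), 1]),
\end{align*}
so that $\var(\log(X_g/Y_g); [a, g(a)]) \leq \var(\log Dg) < \varepsilon$.

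Finally, by formula (\ref{derivada-de-M}), the representative $M^{a,a}_g$ of the Mather invariant has derivative $(X_g/Y_g) \circ \psi_{X_g}$; since $\psi_{X_g}: t \mapsto g_t(a)$ is a monotone bijection from $[0,1]$ onto $[a, g(a)]$, the variation of $\log DM^{a,a}_g$ over one period equals $\var(\log(X_g/Y_g); [a, g(a)]) < \varepsilon$. As the total variation of $\log D(\cdot)$ is unaffected by pre- and post-composition with rotations, $\var(\log DM_f) = \var(\log DM_g) < \varepsilon$. Letting $\varepsilon \to 0$ gives $\var(\log DM_f) = 0$; combined with the normalization $\int_0^1 DM_f = 1$ (forced by any lift of $M_f$ commuting with the unit translation), this forces $DM_f \equiv 1$, so $M_f$ is a rotation and the Mather invariant is trivial. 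The main obstacle will be to derive and justify the $Y_g$-analogue of (\ref{e:logX2}) and to identify the sum of the two Szekeres variations on the fundamental interval with $\var(\log DM_g)$ through the explicit formula (\ref{derivada-de-M}).
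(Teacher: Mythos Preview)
Your proposal is correct and follows essentially the same route as the paper's first proof. The paper also reduces the Mather-invariant claim to the inequality $\var(\log DM_f)\le |\log Df(0)|+|\log Df(1)|+\var(\log Df)$ via \eqref{e:logX2} and its $Y$-analogue applied to a fundamental interval, then invokes Proposition~\ref{doble-criterio} and the conjugacy-invariance of $M_f$ to replace $\var(\log Df)$ by $\dist(f)$; you simply perform the conjugation first and work with $g$, which is an equivalent presentation (your care in taking $h_n$ from \eqref{def-h_n} so that $g$ stays $C^2$ is exactly the detail the paper leaves implicit). For parabolicity, the paper compares $\log Df^n$ at the two endpoints directly to obtain the slightly sharper bound $\dist(f)\ge|\log Df(0)|+|\log Df(1)|$, but your intermediate-value argument is equally valid for the purpose at hand. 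One minor point: when you carry out the mirror-image derivation for $Y_g$, the interval that naturally appears is $[a,1]$ rather than $[g(a),1]$, but either bound yields $\var(\log X_g)+\var(\log Y_g)\le \var(\log Dg)$ on the fundamental domain, so the conclusion is unaffected.
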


\begin{proof}
The first part holds more generally for $f \in \mathrm{Diff}_+^{1+\mathrm{bv},\Delta} ([0,1])$. Indeed, since 
$$\mathrm{\var} (\log Df^n) \geq \big| \log Df^n (1) - \log Df^n (0) \big| = n \, \big| \log Df (1) - \log Df (0) \big|$$
and since $\log Df(0)$ and $\log Df(1)$ have ``different signs'' (in a broad sense), we have
\begin{equation}\label{siempre}
\dist (f) \geq \big| \log Df (0) \big| + \big| \log Df (1) \big|.
\end{equation}
As for the second part, we will actually prove a stronger statement, namely
\begin{equation}\label{e:ineg1}
\var(\log DM_f)\le |\log Df(0)|+|\log Df(1)|+\dist(f),
\end{equation}
which will also imply half of Theorem \ref{t:fund-ineq}. Again, notice that $D M_f$ is not properly defined, but \, $\var(\log DM_f)$ \, is. Indeed, 
$\var(\log DM^{a,b}_f)$ does not depend on the base points $a, b$ chosen to parametrize $(0,1)$ by $\R$ using the flows $f_t$ and $f^t$ 
associated to $f$ (here, the total variation is understood on the circle or on $[0,1]$, depending whether $M^{a,b}_f$ is seen as a 
diffeomorphism of the circle or of $\R$). From now on we fix such a representative $M^{a,b}_f$ of $M_f$ with $a=b$, 
we still denote this representative by $M_f$, and we proceed to do explicit computations. 

In view of the discussion above, if the asymptotic distorsion of $f\in \mathrm{Diff}^{2,\Delta}_+ ([0,1])$ vanishes, the right hand side of \eqref{e:ineg1} 
vanishes. Thus, assuming this inequality holds, $DM_f$ must be constant, which is necessarily $1$ since $M_f$ is a circle diffeomorphism. 
Therefore, $f$ has a trivial Mather invariant.  \medskip

We now proceed to prove \eqref{e:ineg1}. We may assume that $f(x) > x$  for all $x \in (0,1)$. Indeed, changing $f$ by $f^{-1}$ 
transforms $M_f$ into its conjugate by the involution $\, y \mapsto -y \,$ of the circle, so that $\var (\log DM_f)$ 
remains the same, while $\, \dist (f) = \dist (f^{-1})$. 

Recall that, letting $\psi_X:t\mapsto  f_t(a)$ (which sends $[0,1]$ to $[a,f(a)]$), one has $DM_f=\frac X Y\circ \psi_X$, hence
\begin{equation}\label{e:varlogDM}
\var(\log DM_f) = \var \big( \log \tfrac X Y ; [a,f(a)] \big).
\end{equation}
As a consequence,
\begin{align*}\label{e:varlogDM}
\var(\log DM_f) &\le  \var(\log X ; [a,f(a)])+ \var(\log Y ; [a,f(a)])\\
&\le  |\log Df(0)| + \var (\log Df ; [0,a]) + |\log Df(1)| + \var(\log Df; [a,1]) \\
&=  |\log Df(0)|+|\log Df(1)|+\var(\log Df),
\end{align*}
where we have used \eqref{e:logX2} and its analogue for $Y$. Now, since $M_f$, as well as $\log Df(0)$ and $\log Df(1)$, are invariant under $C^{1+\mathrm{bv}}$ conjugacies, we get
$$\mathrm{var} (\log DM_f) \leq  |\log Df(0)| + |\log  Df(1)| +  \inf_{h \in \mathrm{Diff}_+^{1+\mathrm{bv}}([0,1])} \mathrm{var} \big( \log D (hfh^{-1}) \big),$$
which is precisely the announced inequality, according to \eqref{primera}.
\end{proof}

\begin{proof}[Alternative proof]
The essence of the proof of \eqref{e:ineg1} given above is ``hidden'' in the use of \eqref{e:logX2}, which relies on the explicit expressions of $X$ and $Y$. We next provide a more self-contained proof of \eqref{e:ineg1}, which will also be a starting point for the proof of Proposition~\ref{prop-unica}.

Given a point $a$ in the interior of the unit interval and $k \in \mathbb{Z}$, replacing $a$ by $f^k(a)$ in \eqref{e:varlogDM} we get
$$\var ( \log DM_f ) = \var \big( \log \tfrac X Y, [f^k(a),f^{k+1}(a)] \big).$$
Hence, for all $n \geq 1$,
\begin{equation}\label{e:mean}
\var ( \log D M_f ) = \frac{1}{2n} \var(\log \tfrac X Y, [f^{-n}(a),f^{n}(a)])
=\frac{1}{2n}  \int_{f^{-n}(a)}^{f^n(a)} \left| \frac{DX}{X} - \frac{DY}{Y} \right| (x) \, dx.
\end{equation}
Now, from the relations $\, X \circ f = X \cdot Df \,$ and $\, Y \circ f = Y \cdot Df, \,$ one obtains
$$X \circ f^{-2n} = X \cdot Df^{-2n}, \qquad Y \circ f^{2n} = Y \cdot Df^{2n}.$$
Taking derivatives of logarithms, these equalities yield
$$\frac{DX}{X} \circ f^{-2n} \cdot Df^{-2n} = \frac{DX}{X} + \frac{D^2 f^{-2n}}{D f^{-2n}}, \qquad 
\frac{DY}{Y} \circ f^{2n} \cdot Df^{2n} = \frac{DY}{Y} + \frac{D^2 f^{2n}}{D f^{2n}}.$$
Replacing the thus obtained values of $DX/X$ and $DY/Y$ in (\ref{e:mean}), we deduce that 
$\, \var ( \log DM_f)  \,$ is equal to
\begin{equation*}
\frac{1}{2n} \int_{f^{-n}(a)}^{f^n(a)} 
\left| \frac{DX}{X} \circ f^{-2n} \cdot Df^{-2n} - \frac{D^2 f^{-2n}}{D f^{-2n}} -
 \frac{DY}{Y} \circ f^{2n} \cdot Df^{2n} + \frac{D^2 f^{2n}}{D f^{2n}} \right| (x) \, dx,
\end{equation*}
which may be rewritten as  
\begin{equation}\label{estimate-4-terms}\frac{1}{2n} \int_{f^{-n}(a)}^{f^n(a)} 
\left| \frac{DX}{X} \circ f^{-2n} \cdot Df^{-2n} + \frac{D^2 f^{2n}}{D f^{2n}} \circ f^{-2n} \cdot Df^{-2n}-
 \frac{DY}{Y} \circ f^{2n} \cdot Df^{2n} + \frac{D^2 f^{2n}}{D f^{2n}} \right| \! (x) \, dx.
 \end{equation}
In particular, $\var ( \log DM_f )$ is bounded from above by the sum of the absolute values of the four terms involved in the integral (\ref{estimate-4-terms}) above.

In order to bound these terms, first notice that 
$$\frac{1}{2n} \int_{f^{-n}(a)}^{f^n(a)} \left| \frac{DX}{X} \circ f^{-2n} \cdot Df^{-2n} \right| \! (x) \, dx 
= \frac{1}{2n} \int_{f^{-3n}(a)}^{f^{-n}(a)} \left| \frac{DX}{X}  \right| \! (x) \, dx$$
converges to $|DX(0)| = |\log Df(0)|$ as $n$ goes to infinity. This is due to the fact that $DX$ is continuous, $f^{-n}(a)$ converges to $0$, and the integral of $1/X$ equals 1 on each interval $[f^{k}(a),f^{k+1}(a)]$. 

Similarly,
$$\frac{1}{2n} \int_{f^{-n}(a)}^{f^n(a)} \left| \frac{DY}{Y} \circ f^{2n} \cdot Df^{2n} \right| \! (x) \, dx 
= \frac{1}{2n} \int_{f^{n}(a)}^{f^{3n}(a)} \left| \frac{DY}{Y} \!  \right|  \! (x) \, dx$$
converges to $|DY (1)| = |\log Df (1)|$ as $n$ goes to infinity, since $f^n(a)$ tends to $1$.

Concerning the integrals of the absolute values of the two other terms in (\ref{estimate-4-terms}), their sum equals
\begin{small}\begin{eqnarray*}
\frac{1}{2n} \int_{f^{-n}(a)}^{f^n(a)} \left| \frac{D^2 f^{2n}}{D f^{2n}} \right| \! (x) \, dx &+ &
\frac{1}{2n} \int_{f^{-n}(a)}^{f^n(a)} \left| \frac{D^2 f^{2n}}{D f^{2n}} \circ f^{-2n} \cdot Df^{-2n} \right| \! (x) \, dx = \\
&=& 
\frac{1}{2n} \int_{f^{-n}(a)}^{f^n(a)} \left| \frac{D^2 f^{2n}}{D f^{2n}} \right| \! (x) \, dx + 
\frac{1}{2n} \int_{f^{-3n}(a)}^{f^{-n}(a)} \left| \frac{D^2 f^{2n}}{D f^{2n}} \right| \! (x) \, dx \\
&=& 
\frac{1}{2n} \int_{f^{-3n}(a)}^{f^n(a)} \left| \frac{D^2 f^{2n}}{D f^{2n}} \right| \! (x) \, dx 
\,\,\,\, \leq\,\,\,\,
\frac{1}{2n} \int_{0}^{1} \left| \frac{D^2 f^{2n}}{D f^{2n}} \right| \! (x) \, dx.
\end{eqnarray*}\end{small}Since the last expression converges to $\, \dist (f) \,$ as $n$ goes to infinity, the desired inequality \eqref{e:ineg1}
is obtained just by passing to the limit.
\end{proof}

\vspace{0.1cm}

We next give a proof of the reverse statement, thus closing the proof of Theorem \ref{t:vanish}.

\vspace{0.2cm}

\begin{prop} \label{segunda-prop} 
If $f \in \mathrm{Diff}_+^{2,\Delta} ([0,1])$ has a trivial Mather invariant and the endpoints are parabolic fixed points, then its asymptotic distortion vanishes.
\end{prop}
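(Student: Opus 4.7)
The plan is to apply the $L^1$ characterization \eqref{segunda} from Proposition \ref{doble-criterio}:
$$\dist(f) = \inf_{u \in L^1([0,1])} \left\| \frac{D^2 f}{Df} - \big( (u \circ f) \cdot Df - u \big) \right\|_{L^1},$$
by exhibiting an explicit sequence $u_n \in L^1([0,1])$ along which this quantity tends to zero. Since the Mather invariant of $f$ is trivial, the two Szekeres vector fields $X$ and $Y$ coincide, giving a single $C^1$ vector field on $[0,1]$ whose time-1 map is $f$; I denote this common vector field by $X$. The parabolicity of the endpoints is equivalent to $Df(0) = Df(1) = 1$, which, by the relation $Df_t(0) = \exp(t\, DX(0))$ recalled in \S \ref{s:mather}, is in turn equivalent to $DX(0) = DX(1) = 0$.

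The starting point is the identity obtained by differentiating the fundamental relation $X \circ f = X \cdot Df$ on the interior of $[0,1]$:
$$\frac{D^2 f}{Df}(x) = \frac{DX}{X}(f(x)) \cdot Df(x) - \frac{DX}{X}(x).$$
Thus the formal choice $u = DX / X$ would make the integrand defining $\dist(f)$ vanish identically. Unfortunately, $DX/X$ need not belong to $L^1([0,1])$ because $X$ vanishes at the endpoints. I therefore fix $a \in (0,1)$ and truncate, setting
$$u_n := \frac{DX}{X} \cdot \chi_{[f^{-n}(a),\, f^n(a)]}.$$
Since $X$ is bounded below away from $0$ and $DX$ is continuous on the compact set $[f^{-n}(a), f^n(a)] \subset (0,1)$, one has $u_n \in L^1([0,1])$.

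I will then compute the integrand $\frac{D^2 f}{Df} - ((u_n \circ f) \cdot Df - u_n)$ by splitting $[0,1]$ into four regions according to whether each of $x$ and $f(x)$ lies inside or outside the truncation interval $[f^{-n}(a), f^n(a)]$. By the identity above, the integrand vanishes on the ``good'' part $[f^{-n}(a), f^{n-1}(a)]$ where both $x$ and $f(x)$ are inside. On each of the four thin boundary strips near $0$ or $1$, the integrand is equal either to $\frac{D^2 f}{Df}$ itself or to one of its two summands $\frac{DX}{X}(x)$, $\frac{DX}{X}(f(x)) \cdot Df(x)$; a change of variable combined with the estimate \eqref{e:logX} (used with $\log Df(0) = \log Df(1) = 0$) then bounds the total $L^1$-norm by a constant multiple of
$$\var\bigl(\log Df; [0, f^{-n-1}(a)]\bigr) + \var\bigl(\log Df; [f^{n}(a), 1]\bigr).$$
Since $f$ is $C^2$, $\log Df$ is $C^1$, so both of these variations tend to $0$ as $n \to \infty$ (because $f^{-n-1}(a) \to 0$ and $f^n(a) \to 1$). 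This yields $\dist(f) = 0$. The main technical step is the four-region case analysis, which is routine but must be handled carefully so that each boundary piece of the integrand is matched with the correct estimate coming from \eqref{e:logX}.
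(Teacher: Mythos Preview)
Your proof is correct and follows essentially the same route as the paper's: truncate $D\log X = DX/X$ to a compact subinterval, apply the $L^1$ characterization \eqref{segunda}, and use \eqref{e:logX} (together with its analogue near $1$, with $\log Df(0)=\log Df(1)=0$) to show that the boundary contributions vanish in the limit. The paper carries out exactly this computation with general sequences $y_n\to 0$, $z_n\to 1$ in place of your specific choices $f^{-n}(a)$, $f^n(a)$.
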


\begin{proof} Let $X$ be the $C^1$ vector field on $[0,1]$ associated to $f$, according to the hypothesis of triviality of the Mather invariant. We assume that $f(x) > x$ in the interior, the other case being analogous. For each $x$ we have 
$$X(f(x)) = Df(x) \cdot X(x).$$
Thus, for $x \in (0,1)$,
$$\log Df(x) = \log (X (f(x))) - \log (X(x)),$$
hence
\begin{equation}\label{cocycle}
\frac{D^2 f}{D f} (x) = D \log (X) (f(x)) \cdot Df (x) - D \log (X) (x).
\end{equation}

We let $y_n,z_n$ be sequences of points in $(0,1)$ converging to $0$ and $1$, respectively. We then let $u : [0,1] \to \mathbb{R}$ be the function defined as the truncation of $D \log (X)$ to $[y_n,z_n]$, that is, 
$$u_n := D \log (X) \cdot 1_{[y_n,z_n]}.$$
Since $X$ is $C^1$ on $(0,1)$, this function belongs to $L^1([0,1])$. We will show that the $L^1$-norm of 
\begin{equation}\label{a-estimar}
\frac{D^2f}{Df} - \big( (u_n \circ f) \cdot Df - u_n \big)
\end{equation}
converges to $0$ as $n$ goes to infinity. According to Proposition \ref{doble-criterio}, 
this implies that the asymptotic distortion of $f$ vanishes. (These $u_n$ and their convergence will be 
studied as the derivatives of a sequence of conjugators later on.)

To show the desired convergence, notice that, by (\ref{cocycle}), 
$$\frac{D^2f}{Df} (x) - \big( (u_n \circ f) \cdot Df - u_n \big) (x) 
= \left \{
      \begin{array}{rcl}
          0 & \mbox{if} & x \in [y_n , f^{-1}(z_n)], \\         
         -D \log (X) (x)  & \mbox{if} & x \in [f^{-1}(y_n), y_n], \\ 
         D \log (X) (f(x)) \cdot Df (x) & \mbox{if} & x \in [f^{-1} (z_n), z_n], \\ 
         \frac{D^2f}{Df}(x) & \mbox{if} & x < f^{-1}(y_n) \mbox{ or }  x > z_n.
      \end{array}
   \right .$$
Thus, the $L^1$-norm of the coboundary defect (\ref{a-estimar}) equals
\begin{footnotesize}
$$\int_{0}^{f^{-1}(y_n)} \left| \frac{D^2 f}{D f} \right|  (x) \, dx 
+ \int^{y_n}_{f^{-1}(y_n)} \big| D \log (X) \big| (x)\, dx 
+ \int^{z_n}_{f^{-1}(z_n)} \big| D \log (X) (f(x)) \cdot Df(x) \big| \, dx 
+ \int_{z_n}^{1} \left| \frac{D^2 f}{D f} \right| (x)\, dx,$$
\end{footnotesize}where the third term, by a change of variable, is equal to $\int_{z_n}^{f(z_n)}|D\log(X)|$. 
According to \eqref{e:logX2} (and its analogue on $(0,1]$), and since $Df (0) = Df (1) = 1$, the second and third terms are bounded from above by $\var (\log Df ; [0,f^{-1}(y_n)])$ and $\var (\log Df ; [z_n,1])$, respectively. These are equal to the first and last terms, and  converge to zero as $n$ goes to infinity since $D^2 f / D f$ is integrable on $[0,1]$. (One can alternatively deal with the second and third terms by observing that $DX$ is continuous and vanishes at $0$  and $1$ by the parabolicity hypothesis, and that, according to (\ref{eq:int=1}), for every $a\in(0,1)$ one has $\int_a^{f(a)}\frac1X=1$.)
\end{proof}

\vspace{0.1cm}

\begin{rem} \label{no-sub} 
One can use Theorem \ref{t:vanish} to easily show that $\dist$ is not subadditive, as announced at the beginning of \S \ref{s:dist}. Indeed, let $X$ be a $C^{\infty}$ vector field on $[0,1]$ that does not vanish in $(0,1)$ and is $C^{\infty}$ flat at the endpoints. Let $f$ be the time-1 map of the flow of $X$. Fix $p \in (0,1)$, and denote by $I$ the closed interval with endpoints $p,f(p)$. Let $Y$ be a $C^{\infty}$ vector field on $I$ that does not vanish in the interior of $I$ and is $C^{\infty}$ flat at the endpoints. Extend $Y$ as being identically zero on $[0,1] \setminus I$, and let $h$ be the time-1 map of $Y$. 
It follows from Theorem \ref{t:vanish} that both $f$ and $h$ have zero asymptotic distortion. However, by Lemma \ref{break}, the diffeomorphism $fh$ has a nontrivial Mather invariant. Therefore, by Theorem \ref{t:vanish} again, $\dist (fh) > 0 = \dist (f) + \dist(h)$.

It is worth noticing that the diffeomorphism $g : = f h$ above follows the same line of construction as the example given in \cite{mio2} of an interval diffeomorphism with parabolic fixed points and positive asymptotic distortion. The proof of the positivity therein is done by hand (that is, without any reference to the Mather invariant), and applies more generally to $C^{1+\mathrm{bv}}$ diffeomorphisms. However, it only works under an extra dynamical hypothesis, namely, there is a point $q \in (p,f(p))$ fixed by $h$ (hence having the same orbits under $f$ and $g$), yet $Dh (q) \neq 1$ (hence $Df (q) \neq Dg (q)$). See Proposition \ref{extension-bump} for more on this.
\end{rem}

\vspace{0.1cm}

\begin{rem} 
According to Theorem \ref{t:vanish}, the stabilization equality $\, \dist(f^n) = n \cdot \dist (f) \,$ for $f \in \mathrm{Diff}^{2,\Delta}_+ ([0,1])$ and $n \geq 1$ translates into the equality
$$\var (\log M_{f^n}) = n \cdot \var (\log M_f),$$
which follows more directly from the fact that $M_{f^n}$ is nothing but the lift of $M_f$ to the $n$-fold-covering of the circle, as can be easily checked.
\end{rem}

\vspace{0.2cm}

\noindent{\bf {\em Another sequence of conjugators.}} 
Recall that, by the proof of Proposition \ref{doble-criterio}, a sequence $u_n\in L^1([0,1])$ such that the coboundary defect $D^2f/Df-((u_n\circ f)Df-u_n)$ goes to $0$ when $n$ goes to infinity gives rise to\footnote{Or arises from, depending on the point of view...} a sequence of conjugators $\bar h_n\in \Diff^{1+\mathrm{ac}}([0,1])$ such that $\bar h_n\circ f\circ \bar h_n^{-1}$ goes to $\id$ in $\Diff^{1+\mathrm{ac}}_+([0,1])$. Namely, according to Remark \ref{conjugators-h_n}, these conjugators arise as solutions of $D^2 \bar h_n / D \bar h_n = - u_n$. Given $f \in \mathrm{Diff}^{2,\Delta}([0,1])$ with trivial Mather invariant and parabolic fixed points, the $\bar h_n$ associated to the specific $u_n$ from the proof of Proposition \ref{segunda-prop} differs from the conjugators $h_n$ defined by (\ref{def-h_n}). However, they share similar properties, as we will next see. More precisely, Propositions \ref{p:Dhn-centre} and \ref{final-prop-extremos} below are analogues of of Propositions \ref{derivada-interior} and \ref{derivada-extremos}, respectively. \medskip

Let us start by ``interpreting'' these $u_n$ and $\bar h_n$. On $(0,1)$, there are many \emph{continuous} solutions $u$ to 
\begin{equation}\label{eq:exacta}
\frac{D^2f}{Df}=(u\circ f)\cdot Df-u.
\end{equation}
However, none of these solutions can be integrable. Otherwise, if we let $h$ be the $C^{1+\mathrm{ac}}$ diffeomorphism of $[0,1]$ solving $D^2 h / Dh = -u$, one would have $D^2 (hfh^{-1}) / D (hfh^{-1}) = 0.$ This would mean that $hfh^{-1}$ is an affine map, which cannot be the case since the only affine homeomorphism of the interval is the identity and $f$ is nontrivial.

Due to (\ref{cocycle}), a particular (and relevant) solution to (\ref{eq:exacta}) on $(0,1)$ is $u = D\log X$, where $X$ is the unique generating $C^1$ vector field for $f$ on $[0,1]$.  For this choice of $u$, the equation $\frac{D^2h}{Dh}=-u$, that is, $D\log Dh=-D\log X$, defines $Dh$ up to a positive multiplicative constant, namely $Dh=\frac{c}{X}$. For any $c>0$, this defines (up to a choice for $h(\frac12)$, say) a diffeomorphism $h \!: (0,1)\to\R$, which, by property (\ref{eq:int=1}), conjugates $f|_{(0,1)}$ to the translation by $c$ (which is an affine map!). Indeed, for all $x \in (0,1)$, 
\begin{equation}
\label{e:conj-trans}
h(f(x))-h(x) = \int_x^{f(x)}Dh(y) \, dy = c  \int_x^{f(x)}\frac{dy}{X(y)} =c.
\end{equation}
Up to now, this was a rather convoluted way of conjugating a fixed point free diffeomorphism of the open interval to a translation.  Now, when we truncate $u$ to obtain the integrable function $u_n$ in the proof of Proposition \ref{segunda-prop}, we give rise to diffeomorphisms $\bar h_n$ \emph{from $[0,1]$ to itself} which conjugate $f$ to smaller and smaller translations on bigger and bigger compact intervals inside 
$(0,1)$ (see Proposition \ref{p:Dhn-centre} below). The issue here is that near the endpoints, the convergence to the identity of the conjugates is guaranteed by the regularity of $X$ \emph{at both endpoints}. This is where the triviality of the Mather invariant crucially comes into play. 

Let us now be more specific. For concreteness, we will still assume that $f (x) > x$ for all $x \in (0,1)$. We fix a point $p \in (0,1)$, and for simplicity we consider choices of the form $y_n := f^{-k_n} (p)$ and $z_n := f^{\ell_n} (p)$ for increasing sequences of positive integers $k_n,\ell_n$. (This is not  a major restriction, as one can easily adapt the estimates below by looking at the points in the orbit of $p$ that are the 
closest to $y_n$ and $z_n$, respectively.) 

\vspace{0.3cm}

\begin{prop}\label{p:Dhn-centre}
For every $x \in (0,1)$, the value of $D \bar{h}_n$ converges to $0$ as $n$ goes to infinity. Actually, the convergence is uniform on compact subsets of $(0,1)$.
\end{prop}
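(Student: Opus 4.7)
The plan is to exploit the very explicit shape of $D\bar h_n$ given by the ODE $D\log D\bar h_n = -u_n$. Integrating this equation with the normalization $\bar h_n(0) = 0$ and $\bar h_n(1) = 1$ (see Remark \ref{conjugators-h_n}), and using that $u_n = D\log X$ on $[y_n,z_n]$ while vanishing elsewhere, one checks that there exists a positive constant $c_n$ such that
$$D\bar h_n(x) = \frac{c_n}{X(y_n)} \text{ on } [0,y_n], \quad D\bar h_n(x) = \frac{c_n}{X(x)} \text{ on } [y_n,z_n], \quad D\bar h_n(x) = \frac{c_n}{X(z_n)} \text{ on } [z_n,1].$$

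The next step is to use the normalization $\int_0^1 D\bar h_n = 1$ together with the identity $\int_{y_n}^{z_n} dx/X = k_n + \ell_n$ (which follows immediately from (\ref{eq:int=1}) applied to each of the $k_n+\ell_n$ fundamental intervals of $f$ contained in $[y_n,z_n]$) to obtain the closed expression
$$c_n = \left(\frac{y_n}{X(y_n)} + (k_n + \ell_n) + \frac{1-z_n}{X(z_n)}\right)^{-1}.$$
Since $k_n + \ell_n \to \infty$ and each term in the parenthesis is positive, I would conclude that $c_n \to 0$ as $n$ goes to infinity.

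Finally, to deduce uniform convergence on a compact set $K \subset (0,1)$, one observes that $K \subset [y_n, z_n]$ for all sufficiently large $n$, so that $D\bar h_n|_K = c_n/X|_K$. Since $X$ is continuous and strictly positive on $K$, this yields $\sup_K D\bar h_n \leq c_n/\min_K X \to 0$, as desired. I do not expect any real obstacle in this argument: everything reduces to the elementary fact that the number of fundamental intervals of $f$ contained in $[y_n,z_n]$ diverges, the behaviour of $X$ near the endpoints playing no role (in particular, the parabolicity hypothesis is not needed for this statement).
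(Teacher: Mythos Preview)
Your proof is correct and follows essentially the same route as the paper's: both compute the piecewise form of $D\bar h_n$ from the ODE $D\log D\bar h_n=-u_n$, identify $D\bar h_n=c_n/X$ on $[y_n,z_n]$, and conclude by bounding $c_n(k_n+\ell_n)\le 1$. The only cosmetic difference is that the paper phrases this last bound via the conjugacy-to-translation relation $\bar h_n(f(x))-\bar h_n(x)=c_n$, whereas you obtain it from the full normalization $\int_0^1 D\bar h_n=1$ together with $\int_{y_n}^{z_n}dx/X=k_n+\ell_n$; these are the same computation, and your closing remark that parabolicity is not used here is also in agreement with the paper.
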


\begin{proof} From 
$$D \log (D \bar{h}_n) = \frac{D^2 \bar{h}_n}{D \bar{h}_n} = - u_n = -D \log (X) \cdot 1_{[y_n,z_n]}$$
one obtains by integration
$$\log (D \bar{h}_n) (x) = \log (D \bar{h}_n (0)) - \int_0^x D \log (X) (s) \cdot 1_{[y_n,z_n]}(s) \, ds.$$
This easily yields
\begin{equation}\label{derivadas-en-3}
D \bar{h}_n (x) 
= \left \{
      \begin{array}{rcl}
         D \bar{h}_n (0) & \,\, \mbox{if} \,\, & x \in [0, y_n], \\ 
         D \bar{h}_n (0) \cdot \frac{X(y_n)}{X(x)} & \mbox{if} & x \in [y_n,z_n], \\ 
         D \bar{h}_n (0) \cdot \frac{X(y_n)}{X(z_n)} & \mbox{if} & x \in [z_n, 1]. \\ 
      \end{array}
   \right .
\end{equation}
In other words, $D \bar{h}_n$ is constant on $[0,y_n]$ and $[z_n,1]$, and equal to \, 
$c_n / X$ \, on $[y_n,z_n]$, where the constant $c_n$ is uniquely determined by the requirement that $ \int_0^1 D \bar{h}_n (x) \, dx =1$. Since $1/X$ is bounded from above on any compact $J\subset (0,1)$, in order to prove the proposition we only need to show that \, $\lim_n c_n=0$. \, Now, as in \eqref{e:conj-trans}, for every $x\in [y_n,f^{-1}(z_n)]$ we have 
$$h(f(x))-h(x) = c_n.$$ 
As a consequence, 
$$1\ge h(z_n)-h(y_n)=h(f^{\ell_n}(p))-h(f^{-k_n}(p))=(k_n+\ell_n) \, c_n,$$ 
which implies the desired convergence.
\end{proof}

For Proposition \ref{final-prop-extremos} below (which is an analog of Proposition \ref{derivada-extremos}), we will crucially need the following condition for the sequences $k_n, \ell_n$:
\begin{equation}\label{convergences-to-zero}
\lim_{n \to \infty} k_n \, X (f^{-k_n} (p)) = \lim_{n \to \infty} \ell_n \, X (f^{\ell_n}(p)) = 0.
\end{equation}
The existence of sequences satisfying this property is ensured by the next lemma.

\vspace{0.3cm}

\begin{lem} \label{density-1}
With the notations above, there exist increasing and decreasing infinite sequences of integers $n_k$ such that 
\, $n_k \, X (f^{n_k} (p))$ \, converges to zero as $k \to \infty$.
\end{lem}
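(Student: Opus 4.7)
The plan is to establish that both series $\sum_{n \geq 0} X(f^n(p))$ and $\sum_{n \geq 0} X(f^{-n}(p))$ are convergent; the lemma then follows from a standard extraction argument.

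The key tool is the normalization identity \eqref{eq:int=1}, which gives
\[
\int_{f^n(p)}^{f^{n+1}(p)} \frac{ds}{X(s)} = 1 \qquad \text{for every } n \in \Z.
\]
Let $b_n := f^{n+1}(p) - f^n(p)$, and let $M_n$, $m_n$ denote the maximum and minimum of $X$ on the fundamental interval $[f^n(p), f^{n+1}(p)]$. Estimating the integral by the maximum and minimum of $1/X$, I obtain $m_n \leq b_n \leq M_n$. Since $X$ is $C^1$ on $[0,1]$, it is Lipschitz with some constant $L$, hence $M_n - m_n \leq L b_n$, and therefore $M_n \leq (1+L) b_n$. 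In particular, $X(f^n(p)) \leq (1+L) \, b_n$.

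Now the fundamental intervals $[f^n(p), f^{n+1}(p)]$ for $n \geq 0$ are pairwise disjoint and contained in $[p,1]$, so $\sum_{n \geq 0} b_n \leq 1 - p$. Combined with the preceding bound, this yields $\sum_{n \geq 0} X(f^n(p)) < \infty$. The very same argument applied to the intervals $[f^{-n-1}(p), f^{-n}(p)] \subset [0,p]$ for $n \geq 0$ gives $\sum_{n \geq 0} X(f^{-n}(p)) < \infty$.

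Finally, I would invoke the elementary fact that if $\sum a_n$ converges with $a_n > 0$, then $\liminf_{n\to\infty} n \, a_n = 0$ (otherwise $a_n \geq c/n$ for some $c > 0$ and $n$ large, contradicting convergence). Applied to $a_n = X(f^n(p))$, this produces an increasing sequence of positive integers $n_k \to +\infty$ with $n_k \, X(f^{n_k}(p)) \to 0$; applied to $a_n = X(f^{-n}(p))$, it produces a decreasing sequence $n_k \to -\infty$ with $|n_k| \, X(f^{n_k}(p)) \to 0$, as required.

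There is no real obstacle: the only nontrivial ingredient is the comparability $X(f^n(p)) \asymp b_n$ on a fundamental interval, which is a direct consequence of the $C^1$ regularity of $X$ (ensuring a bounded oscillation $M_n/m_n$) together with the normalization \eqref{eq:int=1}.
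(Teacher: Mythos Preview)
Your proof is correct. Both your argument and the paper's proceed by establishing the summability $\sum_{n\ge0} X(f^{n}(p))<\infty$ (and similarly for negative $n$) and then extracting a subsequence along which $n\,X(f^n(p))\to 0$. The difference lies in how the summability is obtained. The paper uses the invariance relation $X(f^n(p))=X(p)\,Df^n(p)$ and the classical $C^{1+\mathrm{bv}}$ distortion estimate $\big|\log Df^n(p)-\log Df^n(p_n)\big|\le \var(\log Df)$ to compare $Df^n(p)$ with $|f^n(I)|/|I|$; this yields $\sum Df^n(p)<\infty$ and even a density-$1$ set of good indices. You instead work directly with $X$: the normalization $\int_{f^n(p)}^{f^{n+1}(p)}\tfrac{ds}{X}=1$ together with the Lipschitz bound $M_n-m_n\le L\,b_n$ (from the $C^1$ regularity of $X$ on $[0,1]$) gives $X(f^n(p))\le (1+L)\,b_n$, and summability follows from $\sum b_n\le 1$. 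Your route is slightly more self-contained in that it avoids the derivative distortion estimate, while the paper's argument would survive in lower regularity ($f\in C^{1+\mathrm{bv}}$ suffices) and yields the stronger density-$1$ conclusion, which is exploited right afterwards to ensure conditions \eqref{convergences-to-zero} and \eqref{extra-condition} simultaneously.
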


\vspace{0.1cm}

\begin{proof} Since $X (f^n(p)) = X(p) \cdot Df^n (p)$, what we need to show is that 
$n \, Df^n (p)$ converges to zero along infinite sequences of positive and negative integers $n$. Let $I$ be the interval of endpoints $p,f(p)$. For each $n \geq 0$, let $p_n \in I$ be such that
$$\frac{| f^n (I) |}{|I|} = Df^n (p_n).$$
Since 
\begin{eqnarray*}
\left| \log \left( \frac{Df^n (p)}{Df^n (p_n)} \right) \right| 
&=& 
\left| \log \left( \frac{Df(p) \cdot Df (f(p)) \cdots Df(f^{n-1}(p)) }{Df (p_n) \cdot Df (f(p_n)) \cdots Df (f^{n-1}(p_n))} \right) \right| \\
&\leq&
\sum_{i=0}^{n - 1}  \big| \log (Df (f^i (p))) - \log (Df (f^i (p_n))) \big| \,\,\, \leq \,\,\, \mathrm{var} (\log Df) \,\,\, =: \,\,\, V,
\end{eqnarray*}
we have 
$$Df^n (p) \leq e^V \, Df^n (p_n) = e^V \, \frac{|f^n (I)|}{|I|}.$$
Thus,
$$e^{-V} |I| \sum_{n \geq 0} Df^n (p) \leq \sum_{n \geq 0} | f^n (I) | < 1.$$
Therefore, given any $\varepsilon > 0$, we must have $Df^n (p) \leq \varepsilon / n$ for 
a density-1 set of positive indices $n$, otherwise the sum of $Df^n (p)$ wouldn't converge. This shows the existence of a sequence of positive integers 
$n$ along which $n \, D(f^n(p))$ converges to $0$. The existence of a sequence of negative integers sharing this property is established in a similar way.
\end{proof}

\vspace{0.3cm}

We will need to impose another extra condition on the sequences $k_n, \ell_n$. Namely, 
we will ask for the existence of a positive constant $C$ such that, for all $n$, 
\begin{equation}\label{extra-condition}
\frac{1}{C} \leq \frac{X (f^{-k_n} (p))}{X (f^{\ell_n} (p))} \leq C.
\end{equation}
The existence of sequences with this property (and also satisfying (\ref{convergences-to-zero})) can be easily established from the 
relation $X (f^m (p)) = X(p) \cdot Df^m (p)$ by using the fact that, as it was pointed out in the proof of Lemma \ref{density-1}, the 
sequences $k_n$ and $\ell_n$ arise along subsets of density 1 of the set of positive integers. (We leave the details to the reader.)

\vspace{0.3cm}

\begin{prop} \label{final-prop-extremos}
If we impose the extra conditions (\ref{convergences-to-zero}) and (\ref{extra-condition}) for the sequences $k_n, \ell_n$, then the derivative of $\bar{h}_n$ at each endpoint diverges as $n$ goes to infinity.
\end{prop}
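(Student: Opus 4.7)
The plan is to leverage the explicit formula \eqref{derivadas-en-3} obtained in the proof of Proposition \ref{p:Dhn-centre}, together with the normalization $\int_0^1 D\bar h_n(x)\,dx = 1$, to derive a closed expression for $D\bar h_n(0)$ and then show that the extra hypotheses (\ref{convergences-to-zero}) and (\ref{extra-condition}) force each term of its denominator to tend to zero.

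First, I would integrate the piecewise expression \eqref{derivadas-en-3} over $[0,1]$. Splitting the integral into three pieces $[0,y_n]$, $[y_n,z_n]$ and $[z_n,1]$, and using the flow identity \eqref{eq:int=1} repeatedly (or equivalently the fact that $y_n=f^{-k_n}(p)$ and $z_n=f^{\ell_n}(p)$, so that $\int_{y_n}^{z_n} ds/X(s) = k_n+\ell_n$), the normalization becomes
\begin{equation*}
1 = D\bar h_n(0)\left[\,y_n + X(y_n)(k_n+\ell_n) + (1-z_n)\,\frac{X(y_n)}{X(z_n)}\,\right].
\end{equation*}
Thus $D\bar h_n(0)$ is the reciprocal of the bracketed quantity, and everything reduces to showing that this bracket goes to $0$ as $n\to\infty$.

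The first term $y_n\to 0$ is immediate since $y_n=f^{-k_n}(p)\to 0$. For the second term I would write $X(y_n)(k_n+\ell_n) = k_n X(y_n) + \ell_n X(y_n)$: the first summand tends to zero directly by condition \eqref{convergences-to-zero}, while for the second I would factor $\ell_n X(y_n) = \frac{X(y_n)}{X(z_n)}\cdot \ell_n X(z_n)$, which also tends to zero since the ratio is bounded by \eqref{extra-condition} and $\ell_n X(z_n)\to 0$ by \eqref{convergences-to-zero} again. For the third term, \eqref{extra-condition} bounds $X(y_n)/X(z_n)$ by $C$, and $1-z_n\to 0$ since $z_n\to 1$. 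Hence $D\bar h_n(0)\to\infty$.

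Finally, the divergence of $D\bar h_n(1)$ follows for free from the last line of \eqref{derivadas-en-3}: one has $D\bar h_n(1) = D\bar h_n(0)\cdot X(y_n)/X(z_n) \ge D\bar h_n(0)/C$ by \eqref{extra-condition}, which then diverges as well. I do not anticipate a real obstacle here: the main conceptual content is already encoded in the formula \eqref{derivadas-en-3} and in the two extra hypotheses, which were precisely tailored so that the three ``boundary contributions'' in the denominator simultaneously vanish. The only care required is in bookkeeping the roles of \eqref{convergences-to-zero} (which kills the large-$n$ summands $k_n X(y_n)$, $\ell_n X(z_n)$) and \eqref{extra-condition} (which trades the missing factor $X(z_n)$ against $X(y_n)$ and vice versa).
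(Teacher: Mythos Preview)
Your proposal is correct and follows essentially the same route as the paper: integrate the piecewise formula \eqref{derivadas-en-3}, use the flow identity to evaluate $\int_{y_n}^{z_n}ds/X(s)=k_n+\ell_n$, and then show that each of the three terms in the denominator of $D\bar h_n(0)$ tends to zero via the same combination of \eqref{convergences-to-zero} and \eqref{extra-condition}; the treatment of $D\bar h_n(1)$ is also identical.
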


\begin{proof} Given \eqref{derivadas-en-3}, we have 
\begin{small}
$$1 
= \int_0^1 D \bar{h}_n (x) \, dx 
= D \bar{h}_n (0) 
    \left[ y_n + X (y_n) \underbrace{\int_{y_n}^{z_n} \frac{dx}{X(x)} }_{k_n+\ell_n}
    + (1-z_n) \cdot \frac{X (y_n)}{X (z_n)} \right],$$
\end{small}which gives
\begin{equation}\label{derivada--en--zero}
D \bar{h}_n (0) = \frac{1}{y_n + X (f^{-k_n}(p)) \cdot (k_n + \ell_n)
    + (1-z_n) \cdot \frac{X (f^{-k_n}(p))}{X (f^{\ell_n} (p) )}}.
\end{equation}     
We claim that each term in the denominator of the right-side expression in (\ref{derivada--en--zero}) converges to zero. Indeed, this is obvious for $y_n$ and it follows from (\ref{extra-condition}) for $(1-z_n) \cdot \frac{X (f^{-k_n}(p))}{X (f^{\ell_n} (p))}$. Finally, by (\ref{convergences-to-zero}), the value of 
$$(k_n + \ell_n) \cdot X (f^{-k_n}(p)) \leq k_n \, X (f^{-k_n}(p)) + C \, \ell_n \, X (f^{\ell_n}(p))$$
converges to zero as well. This shows that $D \bar{h}_n (0)$ diverges to infinity, and the 
same holds for $D \bar{h}_n (1) = D \bar{h}_n (0) \cdot \frac{X (f^{-k_n}(p))}{X (f^{\ell_n} (p))}$ thanks to \eqref{extra-condition}.
\end{proof}

\vspace{0.05cm}

\begin{rem}\label{raro}
In order to produce examples for which the answer to Question \ref{q:conj-tg} is affirmative, the computations above suggest to consider oscillating diffeomorphisms so that the derivatives of iterates have a low growth along increasing subsequences (as it happens,  
for instance, for the diffeomorphisms constructed in Theorem 1.7 of \cite{Polt-Sod}) and/or looking at times $k_n, \ell_n$ along which the behaviors of $X (f^{-k_n} (p))$ and $X (f^{\ell_n} (p))$ are decoupled.
\end{rem}


\section{Nontrivial Mather invariant: a proof of Theorem \ref{t:fund-ineq}}
\label{s:fund-ineq}

Given \eqref{e:ineg1}, in order to prove Theorem \ref{t:fund-ineq}, we are left with proving that 
$$\mathrm{dist}_{\infty} (f) \leq  \mathrm{var} ( \log D M_f ) + \big| \log Df(0) \big| + \big| \log Df(1) \big|.$$
After this, we will discuss some examples and results in relation to the equality case in (\ref{general-eq}). 

\vspace{0.5cm}

\noindent{\bf {\em Controlling the asymptotic distortion.}} We follow the method of proof of Proposition \ref{segunda-prop}, but instead of working with a single vector field $X$ as in the case of a trivial Mather invariant, we consider the left and right vector fields 
($X$ and $Y$, respectively), and use them to approximate $D^2 f / Df$ by coboundaries in $L^1$. As we will see, there are two reasons this can 
fail: the hyperbolicity at the endpoints and the failure of compatibility between these vector fields. The last issue can be detected in a single fundamental domain by the Mather invariant $M_f$, more precisely, by the total variation of the logarithm of its derivative.

As before, we may assume that $f(x) > x$  for all $x \in (0,1)$. We fix a point $a \in (0,1)$, and we do explicit computations for $M_f = M^{a,a}_f$. Consider the function $u_n$ defined as 
$$u_n (x) := \left\{
      \begin{array}{rcl}
          0 & \mbox{if} & x \notin [y_n,z_n], \\
         D \log (X) (x) & \mbox{if} & x \in [y_n, f(a)], \\ 
         D \log (Y) (x)  & \mbox{if} & x \in [f(a),z_n], 
        \end{array} \right .$$
where $y_n$ (resp. $z_n$) is a sequence of points in $(0,1)$ converging to $0$ (resp. $1$). We next compute the $L^1$-norm of the coboundary defect
\begin{equation}\label{cob-defect}
\frac{D^2 f}{D f} - \big( u_n (f(x)) \cdot Df (x) - u_n (x) \big) 
\end{equation}
which, as can be easily checked, coincides with
$$\left \{
      \begin{array}{rcl}
          0 & \mbox{if} & x \in [ y_n, a] \cup [f(a), f^{-1}(z_n)], \\
         -D \log (X) (x)  & \mbox{if} & x \in [f^{-1}(y_n), y_n], \\ 
         D \log (Y) (f(x)) \cdot Df (x) & \mbox{if} & x \in [f^{-1} (z_n), z_n], \\ 
         \frac{D^2 f}{D f} - \big( D \log (Y) (f(x)) \cdot Df (x) - D \log X (x) \big) & \mbox{if} & x \in [a, f(a)], \\
         \frac{D^2f}{Df}(x) & \mbox{if} & x < f^{-1}(y_n) \mbox{ or }  x >  z_n.
      \end{array}
   \right .$$
In comparison to the computation of the proof of Proposition \ref{segunda-prop}, the main new ingredient is the estimate on the interval $[a,f(a)]$, namely 
\begin{equation}\label{cob}
\int^{f(a)}_a \left| \frac{D^2 f}{D f} - \big( D \log (Y) (f(x)) \cdot Df (x) - D \log X (x) \big) \right| \, dx.
\end{equation}
In order to proceed notice that, just as for $X$ in \eqref{cocycle}, 
$$D \log(Y) \circ f \cdot Df - D \log (Y) = \frac{D^2 f}{Df}.$$
Therefore, the expression \eqref{cob} equals  
\begin{small}
\begin{equation}\label{var-mather}
\int^{f(a)}_a \big| D \log (X)(x) - D \log Y (x) \big| \, dx = \var \big( \log(\tfrac X Y ) ; [a,f(a)] \big) = \var (\log DM_f),
\end{equation}
\end{small}according to (\ref{derivada-de-M}).

Concerning the other intervals,  the integral of the absolute value of (\ref{cob-defect}) is obviously $0$ on $[y_n,a]$ and $[f(a), f^{-1}(z_n)]$. Moreover, it converges to zero on $[0,f^{-1}(y_n)]$ and $[z_n, 1]$, since $D^2 f / Df$ is an integrable function. Finally, on $[f^{-1}(y_n), y_n]$ and $[f^{-1}(z_n),z_n]$, the integral converges to $| \log  Df (0)|$ and $| \log Df (1)|$, respectively, 
due to \eqref{e:logX2}. 

We thus conclude that 
$$\liminf_{n \to \infty} \left\| \frac{D^2 f}{D f} - \big( u_n \circ f \cdot Df - u_n \big) \right\|_{L^1} \leq \big| \log Df(0) \big| + \big| \log Df (1) \big| + \var ( \log DM_f ).$$
In view of (\ref{segunda}), this yields the desired upper bound: 
$$\dist (f) \leq \var (\log  D M_f ) + \big| \log Df(0) \big| + \big| \log Df (1) \big|.$$

\vspace{0.35cm}

\noindent{\bf {\em Some examples.}} 
Remind from (\ref{siempre}) that  \,
$\dist (f) \geq \big| \log Df(0) \big| + \big| \log Df(1) \big|.$

\vspace{0.2cm}

\begin{ex} If $f \in \mathrm{Diff}^{2,\Delta}_+ ([0,1])$ has a trivial Mather invariant, 
then \eqref{general-eq} and \eqref{siempre} give 
\begin{equation}\label{iguala}
\dist (f) = \big| \log Df(0) \big| + \big| \log Df(1) \big|.
\end{equation}
It is worth noticing, however, that equality (\ref{iguala}) may also hold for diffeomorphisms with a nontrivial Mather invariant, as it is shown in the next example.
\end{ex}

\vspace{0.2cm}

\begin{ex} \label{monotone}
If $f \in \mathrm{Diff}^{2,\Delta}_+ ([0,1])$ has hyperbolic fixed points and its derivative is monotone, an easy argument (that we leave to the reader) shows that 
$$\dist (f) = \big| \log Df(0) \big| + \big| \log Df(1) \big|.$$ 
If, besides, $f$ has a nontrivial Mather invariant, this obviously yields a strict inequality
$$\dist (f) < \var ( \log DM_f ) + \big| \log Df(0) \big| + \big| \log Df(1) \big|.$$
If $f$ has a trivial Mather invariant, $f$ can be perturbed (on an interval with endpoints of the form $p,f(p)$) into a diffeomorphism with nontrivial Mather invariant (see Lemma \ref{break}) so that the monotonicity of $Df$ is preserved, thus providing a diffeomorphism satisfying the strict inequality above. As we will see in Proposition \ref{prop-unica} 
below, this implies that, for such an $f$, one has 
$$\big| \var (  \log DM_f ) - \dist (f) \big| \neq \big| \log Df(0) \big| + \big| \log Df(1) \big|.$$
\end{ex}

\vspace{0.2cm}

\begin{ex} \label{ex-limit} We next show that the inequality (\ref{e:ineg1}), namely
\begin{equation*}
\var(\log DM_f)\le |\log Df(0)|+|\log Df(1)|+\dist(f),
\end{equation*}
cannot be improved even in the case of non-parabolic fixed points. More precisely, 
we will build a sequence of diffeomorphisms $f_n \!\in\! \mathrm{Diff}^{\infty, \Delta}_+ ([0,1])$ for which both sequences $Df_n (0)$, $Df_n (1)$ are constant and such that the difference between the left and right hand-side expressions above converges to $0$ as $n$ goes to infinity.

Given $\lambda > 0 > \mu$, let $\, b \,$ be chosen so that $\, e^{\lambda} b + e^{\mu} (1-b) = 1, \,$ that is, 
$$\, b := (1-e^{\mu}) / (e^{\lambda}-e^{\mu}). \,$$
Consider the piecewise-affine homeomorphism $f$ of $[0,1]$ having derivative $e^{\lambda}$ on $[0,b]$ and $e^{\mu}$ on $[b,1]$. Let $a < b$ be a point close enough to $b$ so that $e^{\lambda} a > b$. Let $f_n$ be a sequence of $C^{\infty}$ diffeomorphisms with non-increasing derivative that coincide with $f$ on a neighborhood 
of the complement of $[b-1/n,b+1/n]$. By the monotonicity of the derivative, for each $n$ we have 
$$\dist (f_n) =  \lambda - \mu = \big| \log Df(0) \big| + \big| \log Df(1) \big|.$$
We claim that $\var ( \log D M_{f_n} )$ converges to 
$$ 2 \, \big[ | \log Df(0) | + | \log Df(1) | \big].$$ 
Indeed, the vector fields $X_n, Y_n$ associated to $f_n$ satisfy $X_n (x) = \lambda \, x$ for all $x \in [0,f(a)]$ and $Y_n (x) = \mu \, (1-x)$ for all $x \in [f(a),1]$. Using the equality 
$$\frac{DY_n}{Y_n} \circ f_n \cdot Df_n = \frac{DY_n}{Y_n} +  \frac{D^2 f_n}{Df_n},$$
we conclude
\begin{small}
$$\var ( \log D M_{f_n} ) 
= \int_{a}^{f_n(a)} \left| \frac{DX_n}{X_n} - \frac{DY_n}{Y_n} \right| (x) \, dx 
= \int_{a}^{f_n(a)} \left| \frac{DX_n}{X_n} - \frac{DY_n}{Y_n} \circ f_n \cdot Df_n + \frac{D^2f_n}{Df_n} \right| (x) \, dx.$$
\end{small}
Since $D^2 f_n/ Df_n = 0$ outside $[b-1/n,b+1/n]$, we conclude that $\var ( \log D M_{f_n} )$ equals 
\begin{footnotesize}
$$\int_a^{b-1/n} \left| \frac{1}{x} + \frac{Df_n(x)}{1-f_n(x)} \right| \, dx + \int_{b+1/n}^{f_n(a)} \left| \frac{1}{x} + \frac{Df_n (x)}{1-f_n(x)} \right| \, dx 
+ \int_{b-1/n}^{b+1/n} \left| \frac{DX_n}{X_n} - \frac{DY_n}{Y_n} \circ f_n \cdot Df_n + \frac{D^2f_n}{Df_n} \right| (x) \, dx.$$
\end{footnotesize}The sum of the first two integrals equals
$$\log \left( \frac{b-1/n}{a} \cdot \frac{f (a)}{b+1/n} \right) - \log \left( \frac{1-f (b-1/n)}{1-f(a)} \cdot \frac{1-f^2 (a)}{1-f (b+1/n)} \right),$$
which converges to $\log (e^{\lambda}) + \log (e^{-\mu}) = |\log (Df(0))| + |\log Df(1)|$ as $n$ goes to infinity. Concerning the third integral, the contribution of the terms $DX_n / X_n$ and $(DY_n / Y_n ) \circ f_n \cdot Df_n$ becomes negligible in the limit, and 
$$\int_{b-1/n}^{b+1/n} \left| \frac{D^2 f_n}{Df_n} \right| (x) \, dx = \int_{0}^{1} \left| \frac{D^2 f_n}{Df_n} \right| (x) \, dx 
= \var(\log Df_n) = \big| \log Df(0) \big| + \big| \log Df(1) \big|,$$
which yields the announced convergence.
\end{ex}

\vspace{0.3cm}

In the example above, it can be directly checked that the value of 
$$2 \, \big[ | \log Df_n (0) | + | \log Df_n (1) | \big]  = \dist (f_n) + | \log Df_n (0) | + | \log Df_n (1) |$$
is strictly larger than $\var ( \log  D M_{f_n} )$. Actually, this is a quite general phenomenon, as it is next shown.

\vspace{0.3cm}

\begin{prop}\label{prop-unica} 
If $f \in \mathrm{Diff}^{2,\Delta}_+ ([0,1])$ is hyperbolic at both endpoints, then the strict inequality below is satisfied:
$$\var ( \log DM_f )  < \dist (f) + \big| \log Df(0) \big| + \big| \log Df(1) \big|.$$
\end{prop}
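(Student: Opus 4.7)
The plan is to sharpen the alternative proof of Proposition \ref{primera-prop} by exploiting the sign structure that hyperbolicity at the endpoints forces on the four integrands $A_n, B_n, C_n, D_n$ of \eqref{estimate-4-terms}. Assume $f(x)>x$ on $(0,1)$, and set $\alpha := \log Df(0) > 0$, $\beta := \log Df(1) < 0$. Using the Szekeres identities $Df^{\pm 2n}(x) = X(f^{\pm 2n}(x))/X(x) = Y(f^{\pm 2n}(x))/Y(x)$, the first and third terms rewrite as $A_n(x) = DX(f^{-2n}(x))/X(x)$ and $C_n(x) = DY(f^{2n}(x))/Y(x)$. Continuity of $DX, DY$ together with $DX(0)=\alpha>0$ and $DY(1)=\beta<0$ then yields $A_n > 0$ and $-C_n > 0$ pointwise on $\Omega_n := [f^{-n}(a), f^n(a)]$ for all $n$ large enough; consequently $|A_n - C_n| = |A_n| + |C_n|$ everywhere on $\Omega_n$.

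The distortion combination $B_n + D_n$ has a strictly negative mean of order $n$. A direct telescoping yields
\[
\int_{\Omega_n}(B_n + D_n)\,dx \;=\; \log Df^{2n}(f^n(a)) - \log Df^{2n}(f^{-3n}(a)),
\]
and hyperbolicity forces the orbit of $f^{-3n}(a)$ under $f$ to stay near $0$ throughout its $2n$ iterates (contributing $\sim 2n\alpha$ to the sum defining $\log Df^{2n}$), while that of $f^n(a)$ stays near $1$ (contributing $\sim 2n\beta$); hence this integral equals $-2n(|\alpha|+|\beta|) + o(n)$. Setting $E_n := \{B_n + D_n < 0\}$, one deduces $\int_{E_n}|B_n+D_n|\,dx \geq 2n(|\alpha|+|\beta|) + o(n)$. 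On $E_n$, $(A_n - C_n)$ and $(B_n + D_n)$ have opposite signs, so the triangle inequality sharpens to
\[
|(A-C) + (B+D)| \;=\; |A-C| + |B+D| - 2\min(A-C,\,|B+D|) \quad\text{on } E_n.
\]
Combining this with $|B+D|\leq |B|+|D|$ elsewhere and dividing by $2n$, one obtains
\[
\var(\log DM_f) \;\leq\; |\alpha|+|\beta|+\dist(f) \;-\; \liminf_{n\to\infty}\frac{1}{n}\!\int_{E_n}\!\min(A_n-C_n,\,|B_n+D_n|)\,dx.
\]

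The technical heart of the argument is then to show that this last $\liminf$ is strictly positive. A clean identity is $B_n + D_n = -(A_n - C_n) + g$ with $g := DX/X - DY/Y$ a fixed function satisfying $\int_{[a,f(a)]}g=0$ and $\int_{[a,f(a)]}|g| = \var(\log DM_f)$; equivalently, $E_n = \{A_n - C_n > g^+\}$ and on this set $\min(A_n-C_n,|B_n+D_n|) = A_n - C_n - g^+$. Combining $\int_{\Omega_n}(A_n-C_n)\sim 2n(|\alpha|+|\beta|)$ with $\int_{\Omega_n}g^+ = n\,\var(\log DM_f)$ already yields a positive $\liminf$ whenever $\var(\log DM_f) < |\alpha|+|\beta|$, handling the ``small Mather invariant'' regime. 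The remaining regime $\var(\log DM_f) \geq |\alpha|+|\beta|$ --- where the naive bound above becomes vacuous --- is the main obstacle: one must extract additional positivity from the other loose steps in the alternative proof, namely the gap in $|B+D|\leq|B|+|D|$ (strict where $B$ and $D$ have opposite signs) and the contribution of $\log Df^{2n}$ in the thin boundary strips $[0, f^{-3n}(a)]$ and $[f^n(a), 1]$, where $A_n - C_n$ blows up like $1/X$ or $1/Y$. Executing this second case carefully --- essentially a concentration-of-mass argument for $B_n + D_n$ in regions where $A_n - C_n$ is either bounded away from zero or uniformly large --- is where the bulk of the work lies.
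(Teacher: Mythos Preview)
Your proposal is genuinely incomplete, and you acknowledge this yourself: the ``second case'' $\var(\log DM_f)\geq 2(|\alpha|+|\beta|)$ is left as a vague sketch (``executing this second case carefully\dots\ is where the bulk of the work lies''). Unfortunately this case is not a technical afterthought: since $\dist(f)\geq |\alpha|+|\beta|$ always holds, any putative equality $\var(\log DM_f)=\dist(f)+|\alpha|+|\beta|$ automatically lands in your hard regime, so your ``easy case'' never touches the situation you actually need to rule out. Your two suggested sources of extra gap --- the slack in $|B+D|\leq|B|+|D|$ and boundary-strip contributions --- are not quantified, and it is unclear they can be made to produce a definite positive liminf.

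The idea you are missing is to \emph{first} invoke the Sternberg--Yoccoz $C^2$ linearization theorem (all quantities in the statement are $C^2$-conjugacy invariant) so that $f$ is genuinely linear on neighborhoods of $0$ and $1$. This single reduction collapses all your asymptotics into \emph{exact} equalities for any fixed large $n$: one gets $\tfrac{1}{2n}\int_{\Omega_n}|A_n|=|\alpha|$ and $\tfrac{1}{2n}\int_{\Omega_n}|C_n|=|\beta|$ exactly, and --- this is the crucial computation (equation \eqref{eq:rara} in the paper) --- $\tfrac{1}{2n}\int_{\Omega_n}|B_n+D_n|=\dist(f)$ exactly, proved by a direct telescoping that exploits the vanishing of $D^2f/Df$ on the linearity zones. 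Since $A_n-C_n>0$ pointwise (explicit: $1/x+Df/(1-f)>0$) and $\int_{\Omega_n}(B_n+D_n)=\mu-\lambda<0$ forces $B_n+D_n<0$ on a set of positive measure, strict cancellation occurs in $|(A_n-C_n)+(B_n+D_n)|<(A_n-C_n)+|B_n+D_n|$ on that set, and integrating gives the strict inequality immediately --- no case split, no liminf. Working without linearization, as you do, replaces the exact identity $\tfrac{1}{2n}\int|B_n+D_n|=\dist(f)$ by the weaker $\limsup\tfrac{1}{2n}\int(|B_n|+|D_n|)\leq\dist(f)$, and this extra layer of slack is precisely what your argument cannot close.
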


\begin{proof} We give a proof based on the second proof of inequality \eqref{e:ineg1} in the previous section, where we showed that, for every $n\ge 1$, the value of $\, \var ( \log DM_f)  \,$ equals
\begin{equation*}\frac{1}{2n} \int_{f^{-n}(a)}^{f^n(a)} 
\left| \frac{DX}{X} \circ f^{-2n} \cdot Df^{-2n} + \frac{D^2 f^{2n}}{D f^{2n}} \circ f^{-2n} \cdot Df^{-2n}-
 \frac{DY}{Y} \circ f^{2n} \cdot Df^{2n} + \frac{D^2 f^{2n}}{D f^{2n}} \right| \! (x) \, dx.
 \end{equation*} 
An alternative proof based on the ``localization result'' for the asymptotic 
distortion obtained in \S \ref{s:local} will be given in Remark \ref{preuve-alt}.

To begin this proof notice that, since every term in the inequality we wish to prove is invariant under $C^2$ conjugacy, by the Sternberg-Yoccoz' linearization theorem \cite{sternberg,yoccoz}, we may assume that $f$ is linear on neighborhoods of both endpoints, say $f(x) = e^{\lambda} x$ close to $0$ and $f(x) = 1 - e^{\mu} (1-x)$ close to $1$. For the sake of concreteness, we assume $\lambda > 0 > \mu$. 
We choose a large enough $n$ so that both $[0, f^{-n} (a)]$ and $[f^n(a), 1]$ lie in the domains of linearity of $f$. Over there, the vector fields $X$ and $Y$ coincide with $\lambda \, x$ and $\mu \, (1-x)$, respectively, where $\lambda := \log Df(0)$ and $\mu := \log Df (1)$. Therefore, the expression for $\var ( \log DM_f )$ above transforms into 
\begin{equation}\label{int-trans}
\frac{1}{2n} \int_{f^{-n}(a)}^{f^n(a)} 
\left| \frac{Df^{-2n}}{f^{-2n}}  + \frac{D^2 f^{2n}}{D f^{2n}} \circ f^{-2n} \cdot Df^{-2n} + 
 \frac{D f^{2n}}{1- f^{2n}} + \frac{D^2 f^{2n}}{D f^{2n}} \right| (x) \, dx.
\end{equation} 
Notice that, on the one hand, the first and third terms inside the absolute value in the integral are strictly positive. On the other hand, since $D^2 f / Df$ equals zero on $[0,f^{-n}(a)] \cup [f^n(a),1]$, we have that $D^2 f^{2n} / Df^{2n}$ equals zero on $[0,f^{-3n}(a)] \cup [f^n(a),1]$, hence
\begin{eqnarray*}
\frac{1}{2n} \int_{f^{-n}(a)}^{f^n (a)} \left[ \frac{D^2 f^{2n}}{D f^{2n}} \circ f^{-2n} \cdot Df^{-2n} + \frac{D^2 f^{2n}}{D f^{2n}} \right] \! (x) \, dx
&=& \frac{1}{2n} \int_{f^{-3n}(a)}^{f^n(a)} \frac{D^2 f^{2n}}{D f^{2n}}(x) \, dx \\
&=& \frac{1}{2n} \int_0^1 \frac{D^2 f^{2n}}{D f^{2n}}(x) \, dx \\
&=& \log(Df (1)) - \log (Df (0)) \\
&=& \mu - \lambda 
\,\,\,  < \,\,\, 0.
\end{eqnarray*}
Therefore, the expression 
$$\frac{D^2 f^{2n}}{D f^{2n}} \circ f^{-2n} \cdot Df^{-2n} + \frac{D^2 f^{2n}}{D f^{2n}}$$
must be negative on some subset of positive measure of $[f^{-n}(a), f^n (a)]$. This implies that some cancellation must occur in the integral (\ref{int-trans}) above, hence $\var ( \log DM_f )$ is strictly smaller than the sum 
\begin{small}
$$\int_{f^{-n}(a)}^{f^n(a)} 
\left| \frac{Df^{-2n}}{f^{-2n}} \right| (x) \, dx 
 + 
 \int_{f^{-n}(a)}^{f^n(a)} 
\left| \frac{D f^{2n}}{1- f^{2n}} \right| (x) \, dx 
 +
 \int_{f^{-n}(a)}^{f^n(a)} 
\left| \frac{D^2 f^{2n}}{D f^{2n}} \circ f^{-2n} \cdot Df^{-2n} + \frac{D^2 f^{2n}}{D f^{2n}} \right| (x) \, dx.
 $$
\end{small}The proof will be finished by showing that this sum equals 
the second member of the inequality of Proposition \ref{prop-unica}. 

To do this, first notice that 
\begin{eqnarray*}
\int_{f^{-n}(a)}^{f^n(a)} \left| \frac{Df^{-2n}}{f^{-2n}} \right| (x) \, dx 
&=& \frac{1}{2n} \int_{f^{-n}(a)}^{f^n(a)} \left| \frac{DX}{X} \circ f^{-2n} \cdot Df^{-2n} \right| \! (x) \, dx \\
&=& \frac{1}{2n} \int_{f^{-3n}(a)}^{f^{-n}(a)} \left| \frac{DX}{X} \right| (x) \, dx \,\, = \,\, |DX(0)| \,\, = \,\, |\log Df(0)|,
\end{eqnarray*}
since $DX$ is constant on $[0,f^{-n}(a)]$ and the integral of $1/X$ equals 1 on each interval $[f^{k}(a),f^{k+1}(a)]$. Similarly,
\begin{eqnarray*}
\int_{f^{-n}(a)}^{f^n(a)} \left| \frac{D f^{2n}}{1- f^{2n}} \right| (x) \, dx 
&=& \frac{1}{2n} \int_{f^{-n}(a)}^{f^n(a)} \left| \frac{DY}{Y} \circ f^{2n} \cdot Df^{2n} \right| (x) \, dx \\
&=& \frac{1}{2n} \int_{f^{n}(a)}^{f^{3n}(a)} \left| \frac{DY}{Y}  \right| (x) \, dx \,\, = \,\, |DY (1)| \,\, = \,\, |\log Df (1)|.
\end{eqnarray*}
To conclude, we claim that 
\begin{equation}\label{eq:rara} 
\frac{1}{2n} 
\int_{f^{-n}(a)}^{f^n(a)} \left| \frac{D^2 f^{2n}}{D f^{2n}} \circ f^{-2n} \cdot Df^{-2n} + \frac{D^2 f^{2n}}{D f^{2n}} \right| (x) \, dx 
= \dist (f).
\end{equation}
Indeed, since $\frac{D^2f}{Df} = 0$ on $[0,f^{-n}(a)] \cup [f^n(a),1]$, for each $k \geq 1$ we have that 
$$\int_0^1 \left| \frac{D^2 f^{2nk}}{D f^{2nk}} \right| (x) \, dx
= \int_0^1 \left| \sum_{i=0}^{k-1} \frac{D^2 f^{2n}}{D f^{2n}} \circ f^{2ni} \cdot Df^{2ni} \right| (x) \, dx$$
equals 
\begin{small}
\begin{eqnarray*}
\int_{f^{-n}(a)}^{f^n(a)} \left| \frac{D^2 f^{2n}}{D f^{2n}} \right| 
&& \!\!\!\!\!\!\!\!\!\!\!\!\!\!(x) \, dx
+  \int_{f^{(-1-2k)n}(a)}^{f^{(1-2k)n}(a)} \left| \frac{D^2 f^{2n}}{D f^{2n}} \circ f^{2(k-1)n} \cdot Df^{2(k-1)n}\right|  (x) \, dx \, + \\
&+& \sum_{j=1}^{k-1} \int_{f^{(-1-2j)n}(a)}^{f^{(1-2j)n}(a)} \left| \frac{D^2 f^{2n}}{D f^{2n}} \circ f^{2(j-1)n} \cdot Df^{2(j-1)n} + 
\frac{D^2 f^{2n}}{D f^{2n}} \circ f^{2jn} \cdot D f^{2jn} \right|  (x) \, dx,
\end{eqnarray*}
\end{small}which by change of variables transforms into
\begin{eqnarray*}
\int_{f^{-n}(a)}^{f^n(a)} \left| \frac{D^2 f^{2n}}{D f^{2n}} \right|  (x) \, dx
&+&  \int_{f^{-3n}(a)}^{f^{-n}(a)} \left| \frac{D^2 f^{2n}}{D f^{2n}} \right|  (x) \, dx \,\, + \\
&+& (k-1) \int_{f^{-n}(a)}^{f^{n}(a)} \left| \frac{D^2 f^{2n}}{D f^{2n}} \circ f^{-2n} \cdot Df^{-2n} + \frac{D^2 f^{2n}}{D f^{2n}} \right|  (x) \, dx.
\end{eqnarray*}
If we divide by $2nk$, this yields
$$\frac{\var (\log Df^{2nk})}{2nk} = \frac{1}{k} \cdot \frac{\var (\log Df^{2n})}{2n} + 
\frac{k-1}{k} \cdot \frac{1}{2n} \int_{f^{-n}(a)}^{f^{n}(a)} \left| \frac{D^2 f^{2n}}{D f^{2n}} \circ f^{-2n} \cdot Df^{-2n} + \frac{D^2 f^{2n}}{D f^{2n}} \right|  (x) \, dx.$$
Finally, letting $k$ go to infinity, we obtain (\ref{eq:rara}) in the limit.
\end{proof}


\section{Localizing the asymptotic distortion}
\label{s:local}

In the previous proofs of Theorems \ref{t:vanish} and \ref{t:fund-ineq}, we used the characterizations of the asymptotic distortion in terms of conjugacy and approximation by $L^1$ coboundaries given by Proposition \ref{doble-criterio}. Here, we take yet another viewpoint and show that, for diffeomorphisms without interior fixed points, the asymptotic distortion can be approximated by the localization of the distortion of a finite iterate. At the end of the section, this will yield another proof of Theorem \ref{t:fund-ineq}, and will be used to prove Theorems \ref{t:continu} and \ref{t:invariant} in the next sections.

\begin{prop}\label{cor-dist-loc}
Let $f \in \mathrm{Diff}^{1+\mathrm{bv},\Delta}_+([0,1])$ be such that $f(x) > x$ for all $x \in (0,1)$, and let $p \in (0,1)$. Then 
\begin{equation}\label{as-limit}
\dist (f) = \lim_{N \to \infty} \mathrm{var} \big( \log Df^{2N} ; [f^{-N}(p), f^{-N + 1}(p)] \big).
\end{equation}
\end{prop}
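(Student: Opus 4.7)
My plan is to prove $\limsup_N V_{-N}^{2N} \leq \dist(f)$ and $\liminf_N V_{-N}^{2N} \geq \dist(f)$ separately, using the shorthands $V_k^n := \var(\log Df^n; I_k)$ and $W_k := \var(\log Df; I_k)$ for the fundamental intervals $I_k := [f^k(p), f^{k+1}(p)]$. The workhorse will be the cocycle identity $\log Df^n(f(x)) - \log Df^n(x) = \log Df(f^n(x)) - \log Df(x)$, which via the change of variables $y=f(x)$ sending $I_k$ onto $I_{k+1}$ yields the indispensable local estimate
\[
|V_{k+1}^n - V_k^n| \leq W_k + W_{k+n}.
\]
Since $\sum_k W_k = \var(\log Df)<\infty$, the tails $T^+_K := \sum_{k \geq K} W_k$ and $T^-_K := \sum_{k \leq -K} W_k$ tend to $0$ as $K \to \infty$; this summability, the $C^{1+\mathrm{bv}}$ hypothesis, is what will make every error in the argument collapse in the limit.

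For the \emph{upper bound}, I will invoke the conjugacy characterization \eqref{primera} of Proposition \ref{doble-criterio}: given $\epsilon > 0$, pick $h \in \Diff^{1+\mathrm{bv}}_{+}([0,1])$ so that $\tilde f := hfh^{-1}$ satisfies $\var(\log D\tilde f) \leq \dist(f) + \epsilon$. On the conjugate side the subadditive bound gives
\[
V_{-N}^{2N}(\tilde f) \leq \sum_{j=-N}^{N-1} \var(\log D\tilde f; \tilde I_j(\tilde f)) \leq \var(\log D\tilde f) \leq \dist(f)+\epsilon,
\]
while applying the chain rule to $\tilde f^{2N} = hf^{2N}h^{-1}$ yields
\[
V_{-N}^{2N}(f) \leq V_{-N}^{2N}(\tilde f) + \var(\log Dh; I_{-N}(f)) + \var(\log Dh; I_N(f)).
\]
Since $h$ is fixed and $\log Dh$ is BV, the last two terms vanish as $N\to\infty$, giving $\limsup_N V_{-N}^{2N}(f)\leq \dist(f)+\epsilon$; sending $\epsilon\to 0$ closes this half.

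For the \emph{lower bound}, I will combine $\var(\log Df^{2N}) = \sum_k V_k^{2N}$ with $\frac{1}{2N}\var(\log Df^{2N}) \to \dist(f)$, aiming for $\sum_k V_k^{2N} \leq 2N\cdot V_{-N}^{2N} + o(N)$. The plan is to split the sum over $k$ using two parameters $M,K$: on the \emph{central} range $k \in [-2N+M, -M]$, telescoping the local estimate delivers $|V_k^{2N} - V_{-N}^{2N}| \leq T^-_{M+1} + T^+_M + T^-_{N+1} + T^+_N$, since the telescoped sums of $W_j$ and $W_{j+2N}$ only involve indices falling in those four tails; on the \emph{buffer} of size $O(M+K)$ flanking it, I use the crude bound $V_k^{2N} \leq \var(\log Df)$; and on the \emph{far} complement $k \notin [-2N-K, K]$, the subadditive bound $V_k^{2N} \leq \sum_{j=0}^{2N-1} W_{k+j}$ gives, after swapping sums, a total bounded by $2N(T^+_K + T^-_K)$. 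Summing, dividing by $2N$, and then choosing $M(N),K(N)\to\infty$ with $M/N, K/N\to 0$ makes every error vanish and yields $\liminf_N V_{-N}^{2N} \geq \dist(f)$.

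The main obstacle will be this lower bound. It is tempting but misleading to hope that $V_k^{2N} \approx V_{-N}^{2N}$ uniformly in $k$: the cocycle only delivers the \emph{consecutive} comparison $|V_{k+1}^{2N}-V_k^{2N}|\leq W_k+W_{k+2N}$, and the naive telescoped bound degrades as $|k-(-N)|$ grows. The delicate point is to balance the three scales $1\ll M(N), K(N)\ll N$ so that simultaneously the central telescoping error, the buffer's crude variation bound, and the far-tail contribution all become negligible compared to $2N$. The summability $\sum_k W_k < \infty$, which is exactly the $C^{1+\mathrm{bv}}$ hypothesis, is used decisively at every stage.
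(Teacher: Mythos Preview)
Your argument is correct, and it takes a genuinely different route from the paper's. The paper does not split into an upper and a lower bound; instead it proves a single quantitative Lemma~\ref{localization} which, for \emph{two independent parameters} $n>2N$, bounds
\[
\bigl|\var(\log Df^{n}) - (n-2N)\,V_{-N}^{2N}\bigr|
\]
by explicit tails. The mechanism there is a factorization $Df^{n}(x)=Df^{n+\ell-N}(f^{-\ell+N}(x))\cdot Df^{2N}(f^{-\ell-N}(x))\cdot Df^{-\ell-N}(x)$ on each $I_\ell$ in the central range $-n+N<\ell\le -N$, so the middle factor contributes \emph{exactly} $V_{-N}^{2N}$ after the change of variable $x\mapsto f^{-\ell-N}(x)$, rather than approximately via telescoping as in your proof. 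One then divides by $n$, sends $n\to\infty$ with $N$ fixed, and finally $N\to\infty$. Your upper bound instead invokes the conjugacy characterization of Proposition~\ref{doble-criterio}, which is cleaner but makes the argument depend on that earlier result; the paper's lemma is self-contained. Your lower bound, based on the cocycle identity $\log Df^{n}\circ f-\log Df^{n}=\log Df\circ f^{n}-\log Df$ and the telescoped estimate $|V_{k+1}^{n}-V_k^{n}|\le W_k+W_{k+n}$, is elegant and perhaps more transparent than the five-case analysis of Lemma~\ref{localization}. On the other hand, the paper's two-parameter lemma is what gets reused later: it drives the proof of continuity in \S\ref{s:continu}, the alternative proof of Theorem~\ref{t:fund-ineq}, and the $C^1$-invariance in \S\ref{s:invariant}, so its extra generality is not wasted.
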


This will be obtained as a corollary of the following technical lemma:

\begin{lem} \label{localization}
Let $f \in \mathrm{Diff}^{1+\mathrm{bv}}_+([0,1])$ be such that $f(x) > x$ for all $x \in (0,1)$, and let $p \in (0,1)$. There exists $N \geq 1$ such that, for all $n > 2N$, the value of the expression
$$\big| \mathrm{var} ( \log Df^n ) - (n-2N) \, \mathrm{var} \big( \log Df^{2N} ; [f^{-N}(p), f^{-N + 1}(p)] \big) \big| $$
is bounded from above by
\begin{small}$$n \big[ \mathrm{var} \big( \log Df; [0,f^{-N}(p)] \big) + \mathrm{var} \big( \log Df ; [f^{N}(p),1] \big) \big] 
+ (4N-1) \, \mathrm{var} \big( \log Df ; [f^{-N}(p), f^{N}(p) ] \big).$$\end{small}
\end{lem}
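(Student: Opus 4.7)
The plan is to partition $(0,1)$ into the fundamental domains $I_k := [f^k(p), f^{k+1}(p)]$ for $k \in \Z$ and to set $v_j := \var(\log Df; I_j)$, $S_k(n) := \var(\log Df^n; I_k)$, and $V := \var(\log Df^{2N}; I_{-N})$. Since $\log Df^n$ is continuous, $\var(\log Df^n) = \sum_{k \in \Z} S_k(n)$. The key observation is that for indices $k$ in the ``safe'' range $A := [N-n,-N]$, the iterates $f^i(I_k) = I_{k+i}$ split, as $i$ runs from $0$ to $n-1$, into three disjoint phases: a left-outer phase with $k+i \in [k,-N-1]$, a full inner traverse with $k+i \in [-N, N-1]$ (of length exactly $2N$), and a right-outer phase with $k+i \in [N, k+n-1]$. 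Accordingly, I decompose on $I_k$
$$\log Df^n = A_k + B_k + C_k, \qquad B_k := \log Df^{2N} \circ f^{-N-k},$$
and since $f^{-N-k}$ maps $I_k$ homeomorphically onto $I_{-N}$, a change of variables gives $\var(B_k; I_k) = V$. By the reverse triangle inequality for variations,
$$|S_k(n) - V| \leq \var(A_k; I_k) + \var(C_k; I_k) \leq \sum_{j=k}^{-N-1} v_j + \sum_{j=N}^{k+n-1} v_j.$$

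For indices $k \notin A$, that is, $k \leq N-n-1$ or $k \geq -N+1$, I bound $\sum_{k \notin A} S_k(n) = \var(\log Df^n; (0, f^{N-n}(p))) + \var(\log Df^n; (f^{-N+1}(p), 1))$ directly. The first summand is controlled via $\log Df^n = \sum_i \log Df \circ f^i$: every iterate $f^i(x)$ of $x$ in the left boundary region stays in $(0, f^{N-1}(p))$, so it avoids the right outer entirely, and subadditivity of variations gives $\var(\log Df^n; (0, f^{N-n}(p))) \leq \sum_i \var(\log Df; (0, f^{N-n+i}(p)))$, with each term equal to $\sum_{j \leq N-n+i-1} v_j$ by additivity of variation on a partition by continuous functions. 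A symmetric bound applies on the right.

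The heart of the argument is then an exchange-of-summation. Summing $|S_k(n)-V|$ over $k \in A$ gives each $v_j$ in the left outer a coefficient equal to the number of $k \in A$ with $k \leq j$, namely $(j+n-N+1)$ for $j \in [N-n,-N-1]$; the boundary estimate contributes, for the same $j$, a coefficient $(N-1-j)$ (and a coefficient $n$ for the further-left $j \leq N-n-1$, which the safe trajectories do not see). The crucial identity
$$(j+n-N+1) + (N-1-j) = n$$
shows that every $v_j$ in the left outer is effectively counted exactly $n$ times; the same coefficient $n$ arises for each $v_j$ in the right outer by the mirror argument, and for each $v_j$ in the inner the combined coefficient from the two boundary estimates is $(N-1-j) + (j+N) = 2N-1$. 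Summing everything, $|\var(\log Df^n) - (n-2N+1)V| \leq n\cdot E_{\mathrm{out}} + (2N-1)\,E_{\mathrm{in}}$, where $E_{\mathrm{out}}$ and $E_{\mathrm{in}}$ are the outer and inner variations of the statement. Absorbing the extra $V \leq E_{\mathrm{in}}$ into the inner term (because $|A|=n-2N+1$, not $n-2N$) yields the bound claimed by the lemma, with constants actually slightly sharper than stated.

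\textbf{Main obstacle.} The delicate point is the coefficient-counting argument: the naive bound double-counts the outer $v_j$'s, once from the trajectories of safe indices visiting the outer and once from the boundary intervals being themselves in the outer, which would produce a constant $2n$ instead of $n$ in front of $E_{\mathrm{out}}$. Matching the stated constant $n$ requires identifying exactly the complementary ranges of contributing indices and verifying the cancellation identity above. The parallel bookkeeping for the inner (where the same identity produces $2N-1$ rather than $4N-2$) and the off-by-one absorption of $V$ are the remaining technical subtleties.
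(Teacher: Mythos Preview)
Your proof is correct and follows essentially the same strategy as the paper: decompose $\var(\log Df^n)$ over the fundamental domains $I_k$, isolate a ``safe'' range of indices $k$ on which the trajectory $\{f^i(x)\}_{0\le i<n}$ makes one complete pass through the inner region $[f^{-N}(p),f^N(p)]$ (so that the contribution equals $V$ exactly), and bound the rest by subadditivity. The paper uses five cases for $\ell$ (with safe range $-n+N<\ell\le -N$, of size $n-2N$) and crudely bounds each of the $4N-1$ transition indices by the full inner variation $E_{\mathrm{in}}$, whereas you take $|A|=n-2N+1$ and track, via exchange of summation, the exact multiplicity with which each $v_j$ appears. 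Your cancellation identities $(j+n-N+1)+(N-1-j)=n$ on the outer and $(N-1-j)+(j+N)=2N-1$ on the inner are the precise bookkeeping the paper forgoes; this is what buys you the sharper constant $2N$ in place of $4N-1$ in front of $E_{\mathrm{in}}$ after absorbing the extra $V\le E_{\mathrm{in}}$. Both arguments hold for every $N\ge 1$ (the ``there exists $N$'' in the statement is immaterial), and both yield Proposition~\ref{cor-dist-loc} upon dividing by $n$ and letting $n\to\infty$ then $N\to\infty$.
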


\vspace{0.1cm}

\begin{proof}[Proof of the implication] If we divide by $n$ each side of the inequality of the preceding lemma and let $n$ go to infinity, we obtain
\begin{small}$$\big| \dist (f) - \mathrm{var} \big( \! \log Df^{2N} ; [f^{-N}(p), f^{-N + 1}(p)] \big) \big| \leq 
\mathrm{var} \big( \! \log Df; [0,f^{-N}(p)] \big) + \mathrm{var} \big( \! \log Df ; [f^{N}(p),1] \big).$$
\end{small}Letting now $N$ go to infinity, each term of the right-side sum above converges to $0$, thus yielding the announced equality (\ref{as-limit}). 
\end{proof}

\vspace{0.1cm}

\begin{proof}[Proof of Lemma \ref{localization}] For each $n \geq 1$ we have 
\begin{equation}\label{suma-total}
\var (\log Df^n) = \sum_{\ell = -\infty}^{\infty} \var \big( \log Df^n ; f^{\ell} ([p, f(p)]) \big).
\end{equation}
There are 5 types of indices $\ell$ to analyze:  
\begin{enumerate}

\item If $\ell \leq -n - N$, then
$$ \var \big( \log Df^n; f^{\ell} ([p, f(p)]) \big) \leq \sum_{k=0}^{n-1} \mathrm{var} \big( \log Df; f^{k+\ell} ([p, f(p)]) \big),$$
and all the intervals involved in the right-hand side sum, namely,
$$[f^{\ell} (p), f^{\ell + 1} (p)], \ldots, [f^{\ell+n-1}(p), f^{\ell + n} (p)]$$
are contained in $[0, f^{-N}(p)]$, since $\, \ell + n   \leq -N $. 

\item If $-n-N \!<\! \ell \!\leq\! -n + N$, then $\, \var \big( \log Df^n; f^{\ell} ([p,f(p)]) \big) \,$ is smaller than or equal to   
$$\sum_{k=0}^{-\ell - N -1} \mathrm{var} \big( \log Df; f^{k+\ell} ([p, f(p)]) \big)
+ \sum_{k = -\ell - N}^{n-1} \mathrm{var} \big( \log Df; f^{k+\ell} ([p, f(p)]) \big).$$
The intervals involved by the first sum above are all contained in $[0,f^{-N}(p)]$. Moreover, since $\ell + n - 1 \leq N - 1$, the second sum is bounded from above by 
$$\sum_{k = 0}^{2N-1} \mathrm{var} \big( \log Df; f^{k} ([f^{-N}(p), f^{-N+1}(p)]) \big) 
= \mathrm{var} \big( \log Df; [f^{-N} (p), f^N (p)]\big).$$
Notice that this case arises for $2N$ possible values of $\ell$.

\item If $-n + N < \ell \leq -N$, then for $x \in [f^{\ell}(p), f^{\ell+1}(p)]$ we have $f^{-\ell-N}(x) \in [f^{-N}(p), f^{-N+1}(p)]$ and $f^{-\ell+N}(x) \in [f^N(p), f^{N+1}(p)]$. Hence, using the relation,  
$$Df^n (x) = Df^{n + \ell - N} ( f^{-\ell+N} (x)) \cdot  Df^{2N} (f^{-\ell-N}(x)) \cdot Df^{-\ell-N} (x),$$ 
we conclude that the value of 
$$\big| \mathrm{var} \big( \log Df^n; f^{\ell} ([p,f(p)]) \big) - \mathrm{var} \big( \log Df^{2N}; [f^{-N}(p), f^{-N+1}(p)]) \big) \big|$$
is bounded from above by 
$$\mathrm{var} \big( \log Df^{-\ell - N}; f^{\ell} ([p,f(p)]) \big) + \mathrm{var} \big( \log Df^{n+\ell - N}; f^{N}([p,f(p)]) \big).$$
In its turn, this is bounded from above by 
$$\sum_{k=0}^{-\ell - N - 1} \mathrm{var} \big( \log  Df ; f^{k + \ell} [p,f(p)] \big) + 
\sum_{k=0}^{ n + \ell - N - 1} \mathrm{var} \big( \log  Df ; f^{k + N} [p,f(p)] \big),$$
with the intervals involved in the first (resp. second) sum being contained in $[0,f^{-N}(p)]$ (resp. $[f^N(p),1]$). Notice that this case arises for $\, n \!-\! 2N \,$ values of $\ell$.

\item If $-N < \ell \leq N-1$, then $\, \mathrm{var} \big( \log Df^n ; f^{\ell} ([p,f(p)]) \big) \,$ is bounded from above by 
$$\sum_{k=0}^{N-\ell-1} \mathrm{var} \big( {\log Df; f^{k+\ell} ([p,f(p)])} \big) 
+ \sum_{k=N-\ell}^{n-1} \mathrm{var} \big( {\log Df; f^{k+\ell} ([p,f(p)])} \big),$$
which in its turn is smaller than or equal to 
$$ \mathrm{var} \big( \log Df; [f^{-N} (p), f^N (p)]\big) + 
\sum_{k=N-\ell}^{n-1} \mathrm{var} \big( {\log Df; f^{k+\ell} ([p,f(p)])} \big),$$
with the last sum involving only intervals contained in $[f^N(p),1]$. Notice that this case arises for $\, 2N \! - \! 1 \,$ different values of $\ell$.

\item If $N \leq \ell$, then 
$$\mathrm{var} \big( \log Df^n ; f^{\ell} ([p,f(p)]) \big) \leq \sum_{k=0}^{n-1} \mathrm{var} \big( {\log Df ; f^{k+\ell} ([p,f(p)])} \big),$$
and all the intervals involved in the last sum are contained in $[f^N (p), 1]$. 

\end{enumerate}

We come back to equality (\ref{suma-total}). Notice that the variation of $\log (Df)$ on each interval of the form $f^{\ell} ([p,f(p)])$ contained in either $[0,f^{-N}(p)]$ or $[f^N(p),1]$ appears precisely $n$ times along the preceding estimates. Putting all of this together, one easily obtains the desired estimate just using the triangle inequality. 
\end{proof}

\vspace{0.1cm}

\begin{proof}[Another proof of Theorem \ref{t:fund-ineq}] Let $f\in \Diff^{2,\Delta}_+([0,1])$. Assume without loss of generality that $f(x)>x$ for all $x\in(0,1)$, and denote by $X$ and $Y$ its left and right Szekeres vector fields. Fix $a\in(0,1)$, define $\psi_X:t\mapsto f_t(a)$ and $\psi_Y:t\mapsto f^t(a)$, and identify the Mather invariant $M_f$ of $f$ with its representative $M^{a,a}_f:=\psi_Y^{-1}\circ\psi_X$. Recall that the maps $\psi_X$ and $\psi_Y$ satisfy $\psi \circ T = f \circ \psi$ for $T := T_1$, the translation by $1$ on $\R$. Therefore, for each positive $m,n$ we have, letting $k:= m+n$: 
\begin{equation}\label{ren}
M_f = T_{-m} \circ ( \psi_Y )^{-1} \circ f^k \circ \psi_X \circ T_{-n}.
\end{equation}
This yields
$$DM_f (t) 
= \frac{D \psi_X (t-n)}{D \psi_Y \big( (\psi_Y)^{-1} f^k \, \psi_X (t-n) \big) } \cdot Df^k (\psi_X (t-n)) 
= \frac{D \psi_X (t-n)}{D \psi_Y ( M_f (t) + m ) } \cdot Df^k (\psi_X (t-n)),$$
hence
\begin{equation}\label{eq:der-M}
DM_f(t) = \frac{X (\psi_X (t-n)) }{Y (\psi_Y (M(t)+m))} \cdot Df^k (\psi_X (t-n)).
\end{equation}
This easily implies that 
$$\left| \var ( \log DM_f  ) - \var (\log Df^k ; [f^{-n}(a), f^{-n+1}(a)])\right|$$
is bounded from above by 
$$\var \big( \log(X); [f^{-n}(a), f^{-n+1}(a)] \big) + \var \big( \log(Y) ; [f^m(a),f^{m+1}(a)] \big).$$
By \eqref{e:logX2}, the latter expression is smaller than or equal to
$$\big| \log (Df(0)) \big| + \big| \log (Df (1)) \big| + \mathrm{var} (\log Df; [0,f^{-n}(a)] + \mathrm{var} (\log Df ; [f^m (a), 1]).$$
Letting $m=n = N \to \infty$, the last two terms above converge to $0$, and Proposition \ref{cor-dist-loc} yields 
\begin{equation}\label{ren-dist}
\var (\log Df^k; [f^{-n}(a), f^{-n+1}(a)]) = \var (\log Df^{2N} ; [f^{-N}(a), f^{-N+1}(a)] )\to \mathrm{dist}_{\infty} (f).
\end{equation}
Putting everything together, we finally obtain 
$$\big| \var ( \log  DM_f ) - \dist (f) \big| \leq \big| \log (Df(0)) \big| + \big| \log (Df (1)) \big|,$$
thus closing the proof.
\end{proof}


\section{(Dis)continuity of the asymptotic distortion: a proof of Theorem \ref{t:continu}}
\label{s:continu}

\noindent{\bf {\em Upper semicontinuity of $\dist$.}} We start this section by checking that $\dist$ is upper semicontinuous. To do this, remind that for a general subadditive sequence $a_n$, one has
\begin{equation}\label{inf}
\lim_n \frac{a_n}{n} = \inf_n \frac{a_n}{n}.
\end{equation}
Now, given $f \in \mathrm{Diff}_+^{1+\mathrm{bv}} ([0,1])$ and $\varepsilon > 0$, let $N$ be such that, for all $n \geq N$, 
$$\frac{\mathrm{var} (\log Df^n)}{n} < \dist (f) + \frac{\varepsilon}{2}.$$
If $g$ is close enough to $f$ in the $C^{1+\mathrm{bv}}$ topology, then 
$$\big| \mathrm{var} (\log Dg^N ) - \mathrm{var} (\log Df^N ) \big| < \frac{N \varepsilon}{2}.$$
Therefore, by the triangle inequality, 
$$\frac{\mathrm{var} (\log Dg^N)}{N} \leq \frac{\mathrm{var} (\log Df^N)}{N} + \frac{\varepsilon}{2} < \dist (f) + \varepsilon.$$
By (\ref{inf}), this yields
\, $\dist (g) < \dist (f) + \varepsilon,$ \, 
which shows the upper semicontinuity of $\dist$.\medskip

An alternative (and much shorter) argument that uses Proposition \ref{doble-criterio} proceeds as follows. According to (\ref{primera}), we have 
$$\dist (f) = \inf_{h \in \mathrm{Diff}^{1+\mathrm{bv}}_+([0,1])} \var (\log D (hfh^{-1})).$$
Now, for a fixed $h$, the function $f \mapsto \var (\log D (hfh^{-1}))$ is continuous. Therefore, as the infimum along a family of continuous functions, the asymptotic distortion is upper semicontinuous.

\vspace{0.1cm}

\begin{rem} In an analogous way, one can use the characterization (\ref{segunda}) to show the upper semicontinuity of $\dist$ in the space of $C^1$ diffeomorphisms with absolutely continuous derivative. We leave the details to the reader.
\end{rem}

\noindent{\bf {\em Continuity of $\dist$ on $\mathrm{Diff}_+^{1+\mathrm{bv},\Delta}([0,1])$.}} 
We now proceed to the proof of the continuity of $\dist$ at an arbitrary element 
$f \in \mathrm{Diff}_+^{1+vb,\Delta} ([0,1])$ ({\em i.e.} the first half of Theorem \ref{t:continu}). Since for every diffeomorphism $h$ one has $\mathrm{var} (\log D h) = \mathrm{var} (\log D h^{-1})$, we may assume that $f(x) \!>\! x$ for all $x \in (0,1)$. Fix $p \in (0,1)$. Lemma \ref{localization} yields that, for $n > 2N$, the value of 
\begin{equation}\label{a-estimar-dist}
\left| \frac{\mathrm{var} (\log Df^n)}{n} - \mathrm{var} \big( \log Df^{2N} ; [f^{-N}(p), f^{-N+1}(p)] \big) \right|
\end{equation}
is smaller than or equal to 
$$\mathrm{var} \big( \log Df; [0,f^{-N}(p)] \big) + \mathrm{var} \big( \log Df; [f^{N}(p),1] \big) + 
\frac{4N}{n} \mathrm{var} ( \log Df ) + \frac{2N}{n} \mathrm{var} ( \log Df^{2N}).$$
Given $\varepsilon > 0$, let $\delta > 0$ be such that 
$$\mathrm{var} \big( \log Df ; [0,\delta] \big) < \frac{\varepsilon}{8}
\qquad \mbox{and} \qquad
\mathrm{var} \big( \log Df ; [1-\delta,1] \big) < \frac{\varepsilon}{8}.$$
Fix a large-enough $N$ such that $f^{-N} (p) < \delta$ and $f^N (p) > 1-\delta$. Next consider any diffeomorphism $g \in \mathrm{Diff}_+^{1+vb,\Delta} ([0,1])$ that is close enough to $f$ in the $C^{1+\mathrm{bv}}$ topology so that the following conditions are satisfied:
\begin{itemize}

\item $g(x) > x$ for all $x \in (0,1)$,

\item $g^{-N} (p) < \delta$ and $g^N (p) > 1-\delta$,

\item $\mathrm{var} \big( \log Dg; [0,\delta] \big) < \frac{\varepsilon}{8} \,$ 
and $\, \mathrm{var} \big( \log Dg; [1-\delta,1] \big) < \frac{\varepsilon}{8}$,

\item $\mathrm{var} (\log Dg) < \mathrm{var} (\log Df) +1 \,$ and $\, \mathrm{var} (\log Dg^N ) < \mathrm{var} (\log Df^N ) +1$, 

\item $ \big| \mathrm{var} \big( \log Df^{2N} ; [f^{-N}(p), f^{-N+1}(p)] \big) - \mathrm{var} \big( \log Dg^{2N} ; [g^{-N}(p), g^{-N+1}(p)] \big) 
\big| < \frac{\varepsilon}{4}$.

\end{itemize}

\noindent Now consider any large-enough integer $n$ such that $n > 2N$ and 
$$\frac{4N}{n} \big( \mathrm{var} ( \log Df ) + 1 \big) + \frac{2N}{n} \big( \mathrm{var} ( \log Df^{2N}) + 1 \big) < \frac{\varepsilon}{8}.$$ 
Since the estimate given for expression (\ref{a-estimar-dist}) holds when replacing $f$ by $g$, we easily conclude from the previous conditions that 
$$\left| \frac{\mathrm{var} (\log Df^n )}{n} - \frac{\mathrm{var} (\log Dg^n )}{n}\right| < \varepsilon.$$
Since this holds for any large-enough $n$, passing to the limit we conclude that 
$$\big| \dist(f) - \dist(g) \big| \leq \varepsilon.$$
Since $\varepsilon > 0$ was arbitrary, this shows the continuity of $\dist$ at $f$.

\vspace{0.1cm}

\begin{rem} If $f$ is linear on both intervals $[0,f^{-N} (p)]$ and $[f^N (p), 1]$, then the estimate for expression (\ref{a-estimar-dist}) above becomes 
\begin{small}
$$\left| \frac{\mathrm{var} (\log Df^n)}{n} - \mathrm{var} \big( \log Df^{2N} ; [f^{-N}(p), f^{-N+1}(p)] \big) \right|
\leq \frac{4N}{n} \mathrm{var} ( \log Df ) + \frac{2N}{n} \mathrm{var} ( \log Df^{2N}).$$
\end{small}Passing to the limit in $n$, this yields
$$\dist (f) = \mathrm{var} \big( \log Df^{2N} ; [f^{-N}(p), f^{-N+1}(p)] \big).$$
Slightly more generally, it is not hard to check along the same lines that if $g \in \mathrm{Diff}_+^{1+\mathrm{bv}, \Delta}([0,1])$ is linear on intervals $[0,\delta]$ and $[1-\delta,1]$ and $k$ is an integer such that either $g^k (\delta) \geq 1- \delta$ or $g^k (1-\delta) \leq \delta$, then $\dist (g)$ equals either 
$$\mathrm{var} \big( \log Dg^k ; [\delta, g (\delta)] \big) \qquad \mbox{or} \qquad
\mathrm{var} \big( \log Dg^k ; [g (1-\delta), 1-\delta] \big),$$
respectively. If $g$ is of class $C^2$ and the endpoints are hyperbolic fixed points for it, this allows one to localize $\dist (g)$. Indeed, according to the Sternberg-Yoccoz' linearization theorem \cite{sternberg,yoccoz}, the map $g$ is $C^2$ conjugate to a diffeomorphism that is linear on neighborhoods of both endpoints.
\end{rem}

\begin{rem} \label{preuve-alt}
The previous remark may be used to give an alternative proof of Proposition~\ref{prop-unica}. Indeed, assume that $f(x)>x$ for all $x\in(0,1)$ for a given $f\in \Diff^{2,\Delta}_+([0,1])$ with hyperbolic fixed points, and denote by $X$ and $Y$ the left and right Szekeres vector fields. Since all terms in the inequality to be proved are invariant under $C^2$ conjugacy, we may assume that $f$ is linear on neighborhoods of $0$ and $1$. Now remind equality (\ref{eq:der-M}), namely (for $t \in [0,1]$)
$$DM_f(t) = \frac{X (\psi_X (t-n)) }{Y (\psi_Y (M_f(t)+m))} \cdot Df^k (\psi_X (t-n)).$$
We may choose $m=n$ (hence $k=2n$) large enough so that $f$ is linear on $[0,\psi_X (1-n)]$ and $[\psi_Y (n), 1]$. Then the equality
\begin{small}
\begin{equation}\label{eq:v-l}
\var (\log DM_f) 
= \int_0^1 \left| D \big[ \log X \circ \psi_X \circ T_{-n} - \log Y \circ \psi_Y \circ T_n \circ M_f + \log Df^{2n} \circ \psi_X \circ T_{-n} \big] \right| 
\end{equation}
\end{small}implies that $\var (\log DM_f)$ is smaller than or equal to the sum 
\begin{small}
$$\int_0^1 \left| D \big[ \log X \circ \psi_X \circ T_{-n} \big]  \right| 
+ \int_0^1 \left| D \big[ \log Y \circ \psi_Y \circ T_n \circ M_f \big] \right| 
+ \int_0^1 \left| D \big[ \log Df^{2n} \circ \psi_X \circ T_{-n } \big] \right| .$$ 
\end{small}This equals 
\begin{small}
$$\var \big( \! \log (X); [f^{-n}(a), f^{-n+1}(a)] \big) + \var \big( \! \log (Y); [f^n(a), f^{n+1}(a)] \big) 
+ \var \big( \! \log Df^{2n}; [f^{-n}(a),f^{-n+1} (a)] \big)$$
\end{small}which, in its turn, precisely coincides with 
\begin{equation}\label{eq:strict}
| \log Df (0)| + | \log Df (1) | + \dist (f).
\end{equation}
Indeed, the coincidence of the first two terms follows from that the expressions of $X$ and $Y$ are explicit on linearity intervals (namely, $X(x) = \lambda \, x$ and $Y(x) = \mu \, (1-x)$, where $\lambda := \log Df(0)$ and $\mu := \log Df (1)$), and the coincidence 
of the third term follows from the previous remark.

In order to see that expression (\ref{eq:strict}) is actually strictly larger than $\var (\log DM_f)$, we need to check that some cancellation occurs in (\ref{eq:v-l}). This follows from the next two facts:
\begin{itemize}
\item Since $[f^{-n}(a),f^{-n+1}(a)]$ and $[f^n(a),f^{n+1}(a)]$ lie in the domains of linearity of $f$ where the vector fields $X$ and $Y$ become explicit, one easily checks that $D \log X$ (resp. $-D \log Y$) is positive on the first (resp. second) interval, hence the part of the integral associated to $X$ and $Y$ in (\ref{eq:v-l}) is strictly positive;
\item One has
\begin{footnotesize}\begin{eqnarray*}
\int_0^1 \! D \big[ \log Df^{2n} \circ \psi_X \circ T_{-n } \big] 
\! &=& \! \log \left( \frac{Df^{2n} (f^{-n+1}(a))}{Df^{2n} (f^{-n}(a))} \right) 
\,= \, \log \left( \frac{Df (f^n (a)) \cdot Df^{2n-1} (f^{-n+1}(a))}{Df^{2n-1} (f^{-n+1}(a)) \cdot Df (f^{-n}(a))} \right) \\
\!&=&\! \log \left( \frac{Df (f^n (a))  }{  Df (f^{-n}(a))} \right) 
\,\, = \,\,\, \log \left( \frac{e^{\mu}}{e^{\lambda}} \right) \,\, = \,\,\, \mu - \lambda \,\,\, < \,\,\, 0,
\end{eqnarray*}
\end{footnotesize}hence the part of the integral associated to \, $\log Df^{2n}$ \, in (\ref{eq:v-l}) must be negative on a set of positive measure.
\end{itemize}
\end{rem}

\vspace{0.2cm}

\noindent{\bf {\em An example of discontinuity of $\dist$.}} We next proceed to build a sequence $f_n$ of pairwise conjugate $C^\infty$ diffeomorphisms of $[0,1]$ with zero asymptotic distortion that converges in the $C^\infty$ topology towards a $C^\infty$ diffeomorphism $f_{\infty}$ which, on the other hand, has nonzero asymptotic distortion, thus proving the discontinuity of $\dist$. (Notice that all these diffeomorphisms are necessarily parabolic at $0$ and $1$.) 

\begin{figure}[h!]
\centering
\includegraphics[width=9cm]{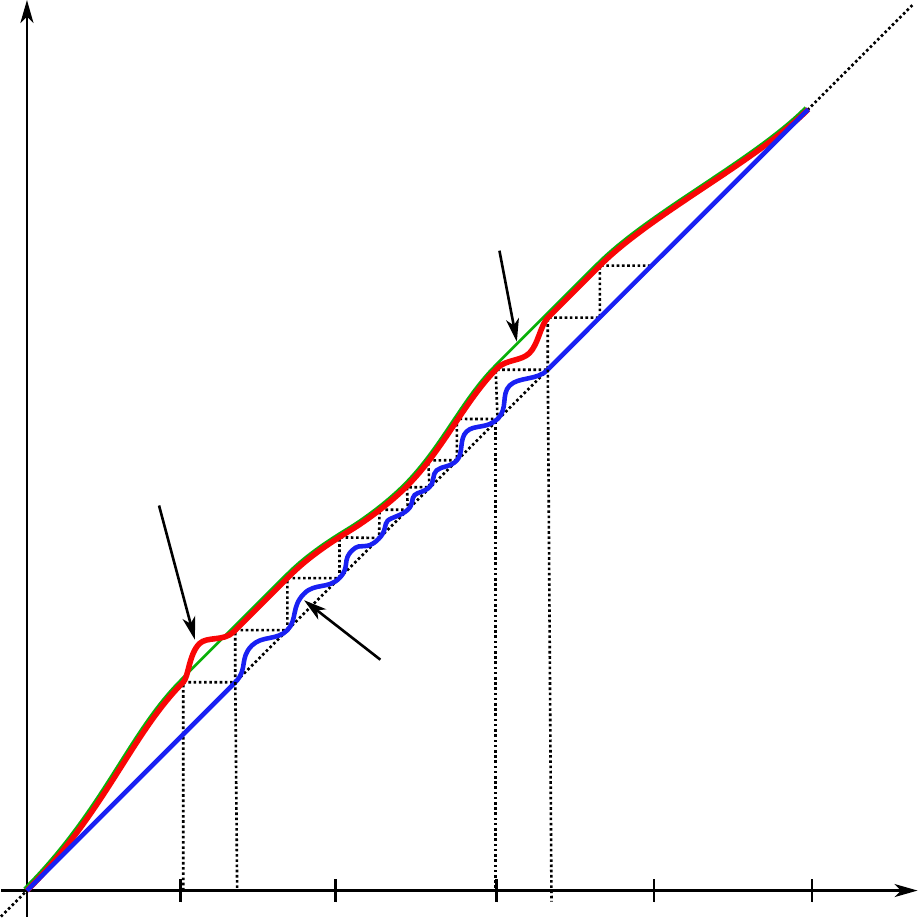}
\put(-150,64){${\phi_n}$}
\put(-220,118){${f_n}$}
\put(-120,190){${g_n}$}
\put(-209,-7){${\frac15}$}
\put(-166,-7){${\frac25}$}
\put(-120,-7){${\frac35}$}
\put(-77,-7){${\frac45}$}
\put(-32,-7){${1}$}
\caption{The diffeomorphisms $g_n$ (in green), $\phi_n$ (in blue), and $f_n$ (in red).}
\label{fig:f-phi}
\end{figure}


\noindent{\em Preliminary definitions.} Start with a smooth vector field $Z_0$ on $[0,1]$ that satisfies the next two properties:
\begin{itemize}
\item It is non-vanishing on $(0,1)$ and $C^1$-flat at the endpoints;
\item It is constant equal to some positive number $\nu$ on $[\frac15,\frac45]$, small enough so that the forward orbit of $p=\frac15$ by the time-$1$ map $g$ of $Z_0$ has at least three points in each interval $[\frac15,\frac25]$ and $[\frac35,\frac45]$. 
\end{itemize}
Denote by $J$ the fundamental interval $[p,g(p)]$, thus equal to $[p,p+\nu]$, and by $K$ some fundamental interval $[g^k(p),g^{k+1}(p)]$ of the form $[q,q+\nu]$ contained in $[\frac35,\frac45]$. 

\begin{figure}[h!]
\centering
\includegraphics[width=12.5cm]{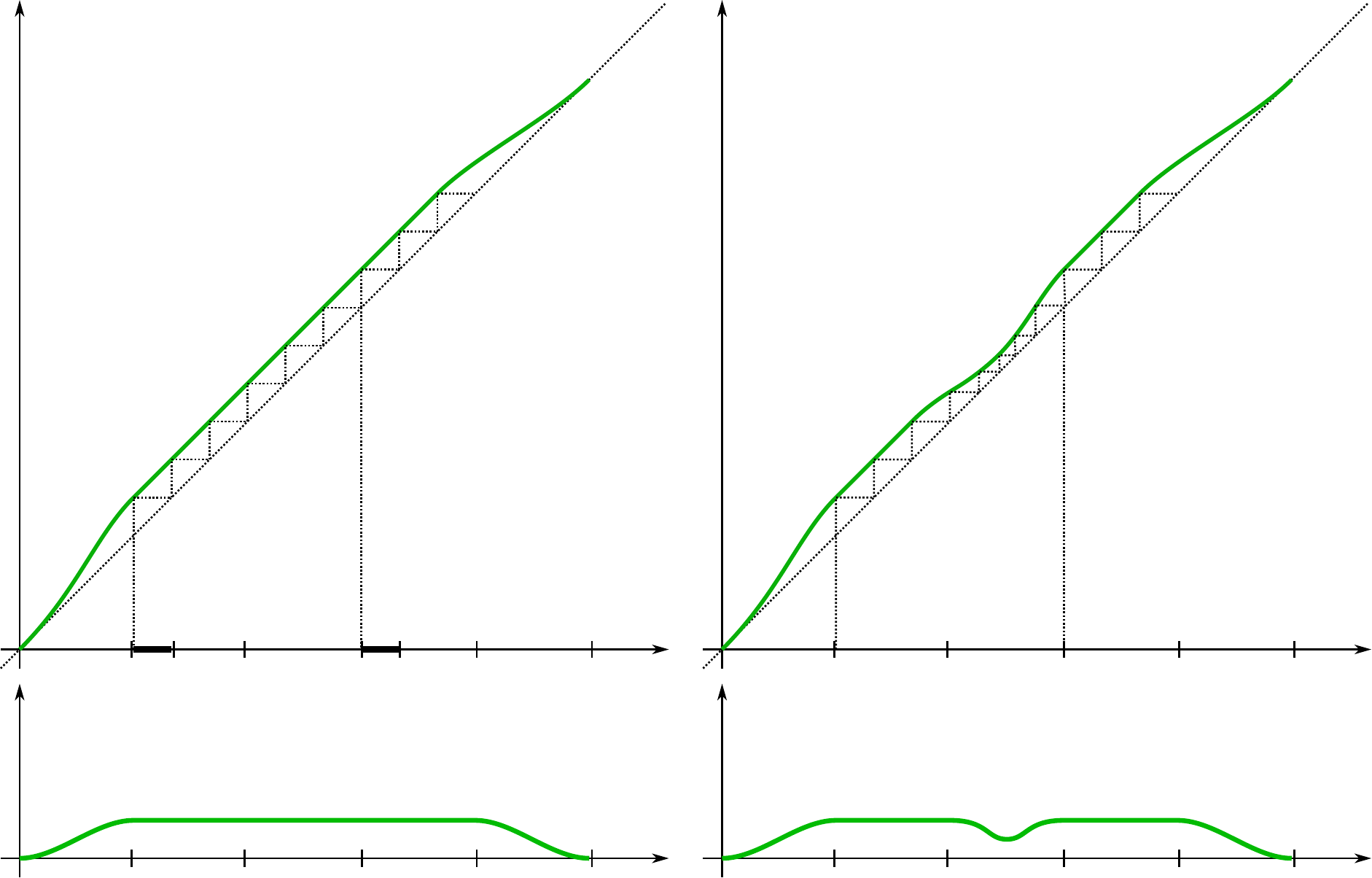}
\put(-320,62){${J}$}
\put(-325,46){${p}$}
\put(-262,62){${K}$}
\put(-295,46){${\frac25}$}
\put(-265,46){${q}$}
\put(-235,46){${\frac45}$}
\put(-205,46){${1}$}
\put(-235,18){${Z_0}$}
\put(-235,195){${g}$}
\put(-140,46){${p}$}
\put(-112,46){${\frac25}$}
\put(-83,46){${q}$}
\put(-53,46){${\frac45}$}
\put(-21,46){${1}$}
\put(-53,18){${Z_{s_n}}$}
\put(-53,195){${g_n}$}
\caption{On the left, the vector field $Z_0$ (below) and its time-1 map $g$ (above), with the fundamental intervals $J$ and $K$ in bold. On the right: a vector field $Z_{s_n}$ with $s_n \in (0,1)$ (below) and its time-1 map $g_{n}$ (above).}
\label{fig:Z-g}
\end{figure}

Now let $Z_1$ be another smooth vector field on $[0,1]$ vanishing only at $0$, $\frac12$ and $1$, and coinciding with $Z_0$ outside $[\frac25,\frac35]$. For every $s\in(0,1)$, let $Z_s:=(1-s)Z_0+sZ_1$. By continuity, there exists a sequence of parameters $s_n$ converging to $1$ such that, for every $n\in\N$, the point $q$ belongs to the forward orbit of $p$ under the time-$1$ map $g_{n}$ of $Z_{s_n}$. 

Denote by $g_\infty$ the time-$1$ map of $Z_1$. Notice that the sequence $g_n$ converges towards $g_\infty$ in the $C^\infty$ topology, and is constant equal to $g$ on $[0,g(p)]\cup[q,1]$. Finally, let $\phi$ be a smooth diffeomorphism of $[0,1]$ supported on $g(J)=[p+\nu,p+2\nu]$, and let $\psi$ be the diffeomorphism 
$g^{k-1}\circ\phi\circ g^{-(k-1)}$, which is supported on $K$.

\vspace{0.4cm}

\noindent{\em Definition of $f_n$.} For every $n\in\N\cup\{\infty\}$, let $f_n$ be the diffeomorphism coinciding with $g_n$ outside $J\cup K$, equal to $\phi\circ g_n$ on $J$, and equal to $g_n\circ \psi^{-1}$ on $K$. In particular, $f_n=f_0$ on $[0,g(p)]\cup[q,1]$. The sequence $f_n$ clearly converges towards $f_\infty$ in the $C^\infty$ topology. 

\vspace{0.4cm}

\noindent{\em Asymptotic distortions.}
It is straightforward to check that, for every $n \! \in \! \N$, there is a unique $C^{\infty}$ diffeomorphism $\phi_n$ that:
\begin{itemize}
\item is equal to the identity on $[0,g(p)]\cup [g(q),1]$, 
\item is equal to $\phi$ on $g(J)$, 
\item satisfies $\, \phi_n\circ g_n = g_n\circ \phi_n \,$ on $[g(p),q]$.
\end{itemize}
By construction, this diffeomorphism also satisfies $f_n = \phi_n\circ g_n\circ \phi_n^{-1}$. In particular, since $g_n$ is the time-$1$ map of a smooth vector field on $[0,1]$, so is $f_n$. Therefore, $f_n$ has a trivial Mather invariant, which by Theorem~A implies that $\dist(f_n)=0$. Since $p$ and $q$ are in the same orbit for each $f_n$, Lemma \ref{l:Cr-conj} implies that all $f_n$ are pairwise $C^{\infty}$ conjugate. 
 
Nevertheless, one can define a Mather invariant for $f_\infty$ on both $[0,\frac12]$ and $[\frac12,1]$, and we claim that both are nontrivial, so that $\dist(f_\infty)>0$ by Theorem~A and Lemma \ref{muchos}. Indeed, let us denote by $\hat{f}_\infty$ (resp. $\hat{g}_\infty$) the diffeomorphism of $[0,\frac12]$ without interior fixed points induced by $f_\infty$ (resp. $g_\infty$). As the time-$1$ map of a $C^1$ flow, $\hat{g}_\infty$ has a trivial Mather invariant. However, $\hat{f}_{\infty}$ is obtained from $\hat{g}_{\infty}$ by a perturbation supported on a single fundamental domain; hence, by Lemma \ref{break}, it has a nontrivial Mather invariant. 


\section{On the $C^1$ invariance of $\dist$: a proof of Theorem \ref{t:invariant}} 
\label{s:invariant}

We next give another application of Proposition \ref{cor-dist-loc}:

\begin{proof}[Proof of Theorem \ref{t:invariant}] Given two  $C^2$ diffeomorphisms $f,g$ of a compact 1-manifold 
such that $f = h g h^{-1}$ for a $C^1$ diffeomorphism $h$, our task is to prove that $\dist (f) = \dist (g)$. 
Let us first consider the case of the circle. On the one hand, if $f,g$ have irrational rotation number, then they both have zero asymptotic distortion \cite{mio2}. On the other hand, if they have rational rotation number, then there exists $k \geq 1$ such that both $f^k$ and $g^k$ have fixed points. We cut the circle at a fixed point of $f^k$, so that may view this map as a diffeomorphism of the unit interval. We independently cut the circle at the image under $h$ of this point, so that $g^k$ also becomes a diffeomorphism of the unit interval, as well as the conjugating map $h$. Since $\, \dist (f^k) = k \cdot \dist(f) \,$ and $\, \dist (g^k) = k \cdot \dist(g), \,$ if we show that $f^k$ and $g^k$ have the same asymptotic distortion, the same will be true for $f$ 
and $g$. Thus, we have reduced the general case to that of the interval.

We next deal with the case where $f,g$ both belong to $\mathrm{Diff}^{2,\Delta}_+ ([0,1])$: the general case for interval diffeomorphisms follows from this together with Lemma \ref{muchos}. By Corollary \ref{cor-dist-loc}, it suffices to show that 
\begin{equation} \label{est-final}
\big| \mathrm{var} \big( \log Df^{2N} ; [f^{-N}(p), f^{-N+1}(p)] \big) 
- \mathrm{var} \big( \log Dg^{2N} ; [g^{-N}(h(p)), g^{-N+1}(h(p))] \big) \big|
\end{equation}
converges to zero as $N$ goes to infinity. 

Now remind that the uniqueness of the left and right vector fields for diffeomorphisms and the fact that their flows coincide with the corresponding $C^1$ centralizers imply that $h$ sends the vector fields of $f$ to those of $g$. It then follows from the equality $Dh = (X_g \circ h) / X_f$ that $h$ is of class $C^2$ on $(0,1)$. Using that $g^{2N} = h f^{2N} h^{-1}$and the subadditivity property of $\mathrm{var} (\log D (\cdot) )$,  we can hence estimate the value of (\ref{est-final}) from above by 
$$\mathrm{var} \big( \log Dh^{-1}; [g^{-N}(p), g^{-N+1}(p)])\big) + \mathrm{var} \big( \log Dh; [f^{N}(p), f^{N+1}(p)] \big),$$
which coincides with
\begin{equation}\label{est-final-final}
\mathrm{var} \big( \log Dh; [f^{-N}(p), f^{-N+1}(p)])\big) + \mathrm{var} \big( \log Dh; [f^{N}(p), f^{N+1}(p)] \big).
\end{equation}
Finally, we claim that both terms of this sum converge to zero as $N$ goes to infinity. 
To show this, we will perform the explicit estimates for the first term, as those of the second one are analogous. We have
\begin{footnotesize}\begin{eqnarray*}
\mathrm{var} \big( \log Dh; [f^{-N}(p), f^{-N+1}(p)] \big) 
\!&=&\! 
\int_{f^{-N}(p)}^{f^{-N+1}(p)} \left| \frac{D^2 h}{ D h} \right| \\
\!&=&\!
\int_{f^{-N}(p)}^{f^{-N+1}(p)} \left|  \frac{DX_g}{X_g} \circ h \cdot Dh - \frac{DX_f}{X_f}\right| 
\,\,= \,\,
\int_{f^{-N}(p)}^{f^{-N+1}(p)} \left|  \frac{DX_g \circ h - DX_f}{X_f} \right|.
\end{eqnarray*}
\end{footnotesize}When $N$ goes to infinity, the supremum of the numerator above on the interval of integration converges to 
$$|DX_g (0) - DX_f (0)| = |Dg (0) - Df (0)|,$$
which equals $0$ since $f$ and $g$ are conjugated by a $C^1$ diffeomorphism. Finally, using the fact that 
$$\int_{f^{-N}(p)}^{f^{-N+1}(p)} \left| \frac{dx}{X_f (x)} \right|  =  1,$$ 
one easily deduces that $\, \mathrm{var} \big( \log Dh; [f^{-N}(p), f^{-N+1}(p)] \big) \,$ converges to $0$ as $N$ goes to infinity, as announced. 
\end{proof}
\vspace{0.1cm}

\begin{rem} \label{rem-invariance-dist}
In the proof of Theorem \ref{t:invariant} above, the key step was the case where $f,g$ belong to $\mathrm{Diff}_+^{2,\Delta} ([0,1])$. Actually, for this case, there are alternative (and quite clarifying) arguments in two different situations:
\begin{itemize}
\item If both endpoints are parabolic fixed points, then the invariance of $\dist$ under $C^1$ conjugacy follows from that of the Mather invariant together with the general formula~(\ref{parabolic}).

\item If both endpoints are hyperbolic fixed points, then a theorem of Ghys and Tsuboi 
\cite{Ghys-Tsuboi} (that strongly relies on the Sternberg-Yoccoz' linearization theorem \cite{sternberg,yoccoz}) establishes that the conjugating map is of class $C^2$, which makes the invariance under conjugacy quite obvious.
\end{itemize}

\noindent Unfortunately, we couldn't find such a direct argument for the case where 
one of the endpoints is parabolic and the other one is hyperbolic.
\end{rem}

\vspace{0.1cm}

We close this section with still another application of Lemma \ref{localization}. What follows is an analog of Lemma \ref{break} (see also Remark \ref{r:mather-local}). The issue here is that, since we are in very low differentiability, we cannot appeal to the Mather invariant, and the estimates of $\dist$ need to be done ``by hand''. Nevertheless, this strongly suggests that weak forms of both Szekeres vector fields and Mather invariant should exist for low-regularity (namely, $C^{1+\mathrm{bv}}$) diffeomorphisms of the interval. We will come back to this technical issue in the separate publication \cite{article2}. 

\vspace{0.1cm} 

\begin{prop} \label{extension-bump} 
Let $f \in \Diff^{1+\mathrm{bv},\Delta}_+([0,1])$ have vanishing asymptotic distortion. If $p \in (0,1)$, then any $g \in \Diff^{1+\mathrm{bv},\Delta}_+([0,1])$ 
that is equal to $f$ outside the interval of endpoints $p,f(p)$ and different from $f$ on this interval has a nonvanishing asymptotic distortion.
\end{prop}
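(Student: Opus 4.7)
The plan is to combine the localization formula from Proposition~\ref{cor-dist-loc} with an explicit computation of $\log Dg^{2N}$ on the fundamental interval where the perturbation acts. Assume without loss of generality that $f(x) > x$ on $(0,1)$, and write $g = \phi \circ f$ with $\phi := g \circ f^{-1}$. Then $\phi$ is a $C^{1+\mathrm{bv}}$ diffeomorphism of $[0,1]$ supported on $[f(p), f^2(p)]$, satisfying $D\phi(f(p)) = D\phi(f^2(p)) = 1$ and $\phi \neq \id$; since the only affine homeomorphism of a compact interval fixing the endpoints is the identity, $D\phi$ is non-constant on $[f(p), f^2(p)]$. A straightforward induction shows that $g^k(p) = f^k(p)$ for every $k \in \Z$, so Proposition~\ref{cor-dist-loc} applied to both $f$ and $g$ with base point $p$ gives, on the common interval $I_N := [f^{-N}(p), f^{-N+1}(p)]$,
\[
\dist(g) = \lim_{N \to \infty} \var\bigl( \log Dg^{2N}; I_N \bigr)
\qquad \text{and} \qquad
\lim_{N \to \infty} \var\bigl( \log Df^{2N}; I_N \bigr) = 0.
\]

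For $x \in I_N$, the forward $g$-orbit $(g^k(x))_{k=0}^{2N-1}$ is strictly increasing and visits the perturbation interval $[p, f(p)]$ exactly once, at step $N$: indeed, $g^k(x) = f^k(x) \leq p$ for $0 \leq k \leq N-1$, while $g^k(x) \geq f(p)$ for $k \geq N+1$. A direct chain-rule computation then yields the key identity
\[
\log Dg^{2N}(x) - \log Df^{2N}(x) = W_N\bigl( f^{N+1}(x) \bigr), \qquad
W_N(z) := \log D\phi(z) + \log Df^{N-1}(\phi(z)) - \log Df^{N-1}(z),
\]
for $z \in [f(p), f^2(p)]$. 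Since $f^{N+1}$ restricts to a diffeomorphism from $I_N$ onto $[f(p), f^2(p)]$, the reverse triangle inequality for total variation gives
\[
\var\bigl( \log Dg^{2N}; I_N \bigr) \;\geq\; \var\bigl( W_N; [f(p), f^2(p)] \bigr) - \var\bigl( \log Df^{2N}; I_N \bigr),
\]
so it suffices to prove $\liminf_{N \to \infty} \var\bigl( W_N; [f(p), f^2(p)] \bigr) > 0$.

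Since $\phi$ fixes both endpoints of $[f(p), f^2(p)]$ with derivative~$1$, we have $W_N(f(p)) = W_N(f^2(p)) = 0$, and a uniform-in-$N$ lower bound on $|W_N|$ at some interior point would suffice. When $\phi$ has an interior fixed point $z_1$ with $D\phi(z_1) \neq 1$, the two $\log Df^{N-1}$ terms in $W_N(z_1)$ cancel and $W_N(z_1) = \log D\phi(z_1)$ is a nonzero constant independent of $N$, so $\var W_N \geq 2 |\log D\phi(z_1)| > 0$; this recovers the ``by hand'' argument from \cite{mio2} alluded to just before the statement. The main obstacle is the remaining case, for instance when $\phi$ has no interior fixed point at all. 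In the $C^2$ framework one would pass to the pointwise limit as $N \to \infty$: Theorem~\ref{t:vanish} provides a generating $C^1$ vector field $X$ for $f$ on $[0,1]$, and the cocycle identity $\log Df^{N-1}(w) = \log X(f^{N-1}(w)) - \log X(w)$ combined with the asymptotics of $f^{N-1}$ near the parabolic fixed point~$1$ yields $W_N(z) \to W_\infty(z) = F(\phi(z)) - F(z)$ with $F := \log(Y/X)$, where $Y$ is the generator of $\phi|_{[f(p), f^2(p)]}$; the triviality of this $\phi$-coboundary would force $F$ to be constant along every $\phi$-orbit, but such orbits accumulate at the two endpoints where $F \to -\infty$ (since $Y$ vanishes there while $X$ does not), a contradiction. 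In the bare $C^{1+\mathrm{bv}}$ regularity assumed here, the Szekeres vector field is not available, so one must first build the ``weak form'' of it announced just before the statement; this substitute, together with the corresponding limiting argument, is the technical core postponed to \cite{article2}, and is where I expect the main difficulty to lie.
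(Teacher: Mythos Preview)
Your reduction is correct and is precisely the one the paper carries out: after using Proposition~\ref{cor-dist-loc} and subtracting the vanishing $f$-contribution, one is left with showing that
\[
\var\bigl(W_N;[f(p),f^2(p)]\bigr)=\var\bigl(\log D(f^{N-1}\phi f^{-(N-1)});[f^{N}(p),f^{N+1}(p)]\bigr)
\]
stays bounded away from~$0$ (your $W_N$ is exactly $\log D(f^{N-1}\phi f^{-(N-1)})\circ f^{N-1}$, as a one-line chain-rule computation confirms).

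The gap is in your last paragraph. You conclude that the case where $\phi$ has no hyperbolic fixed point requires a $C^{1+\mathrm{bv}}$ Szekeres theory from \cite{article2}, but this is not so: the paper closes the argument with an elementary \emph{Kopell-type compactness lemma} that uses nothing beyond $\var(\log Df)<\infty$. Namely, since $\phi\neq\id$ is supported on a single fundamental domain $J$ of $f$, one has $\sup_J D\phi^k\to\infty$ (on any maximal subinterval where $\phi$ has no fixed point, the classical bounded-distortion estimate gives $\sup D\phi^k\ge e^{-\var(\log D\phi)}\,|J_0|/|J_{-k}|\to\infty$, where $J_i$ are the $\phi$-fundamental intervals). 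Pick $k\ge 1$ and $q\in J$ with $D\phi^k(q)>e^{2V}$, where $V:=\var(\log Df)$. Then for every $M\ge 0$,
\[
\log D\bigl(f^{M}\phi^k f^{-M}\bigr)(f^{M}(q))
=\log D\phi^k(q)+\bigl[\log Df^{M}(\phi^k(q))-\log Df^{M}(q)\bigr]
> 2V-\sum_{i=0}^{M-1}\var\bigl(\log Df;f^{i}(J)\bigr)\ge V,
\]
because $q$ and $\phi^k(q)$ lie in the same fundamental domain $J$. Hence $\|D(f^{M}\phi f^{-M})\|_\infty^{k}\ge \|D(f^{M}\phi^k f^{-M})\|_\infty>e^{V}$, so $\log D(f^{M}\phi f^{-M})$ attains a value $>V/k$ somewhere on $f^{M}(J)$, while it vanishes at the endpoints. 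This gives $\var W_{M+1}\ge 2V/k>0$ for all~$M$, which is exactly the uniform lower bound you were missing. No vector fields are needed, and the argument is uniform in $N$, so your $\liminf$ is positive and the proof is complete.

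(Two minor slips: you say the orbit visits ``the perturbation interval $[p,f(p)]$'', but your $\phi$ is supported on $[f(p),f^2(p)]$, and the relevant step is $N{+}1$, not $N$; this does not affect your correct formula for $W_N$. Also, your sketched $C^2$ limit $W_\infty=F\circ\phi-F$ with $F=\log(Y/X)$ and $Y$ a generator of $\phi$ is not quite right --- $\phi$ need not have a single generating vector field on $[f(p),f^2(p)]$ --- but this detour is unnecessary once you have the Kopell lemma.)
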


\vspace{0.1cm}

For the proof, we will need a compactness argument that could be traced back to the work of Kopell \cite{kopell}, which we state and prove below.

\vspace{0.1cm}

\begin{lem} 
Given $f \in \mathrm{Diff}^{1+\mathrm{bv},\Delta}_+([0,1])$ and a nontrivial $C^1$ diffeomorphism $h$ supported on a fundamental domain of $f$, the sequence of conjugates $f^N h f^{-N}$ remains bounded away from the identity in the $C^1$ topology. 
\end{lem}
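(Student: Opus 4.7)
My plan is to exploit the shrinking of the support of $\phi_N := f^N h f^{-N}$ together with the classical bounded-distortion estimate for $C^{1+\mathrm{bv}}$ diffeomorphisms. Without loss of generality, assume $f(x) > x$ on $(0,1)$, and let $I := [p, f(p)]$ denote the fundamental domain containing the support of $h$, so that $\phi_N$ is supported on the shrinking interval $J_N := f^N(I) = [f^N(p), f^{N+1}(p)]$. Since $h$ is nontrivial (and $C^1$, hence fixes $\partial I$ with $Dh = 1$ there), there exists $y_0 \in (p, f(p))$ with $h(y_0) \neq y_0$; up to replacing $h$ by $h^{-1}$, I may assume $\delta := h(y_0) - y_0 > 0$.

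The first step is the classical bounded-distortion estimate: since the iterates $f^k(I)$ for $0 \leq k \leq N-1$ are pairwise disjoint subsets of $[0,1]$, the cocycle identity for $\log Df^N$ yields, for any $t_1, t_2 \in I$,
\[
\big|\log Df^N(t_1) - \log Df^N(t_2)\big| \leq \sum_{k=0}^{N-1} \var\big(\log Df; f^k(I)\big) \leq V := \var(\log Df),
\]
so $\sup_I Df^N \leq e^V \inf_I Df^N$ uniformly in $N$. The second step is a short geometric observation: at $x_0 := f^N(y_0)$, the displacement of $\phi_N$ satisfies
\[
\phi_N(x_0) - x_0 \,=\, f^N(h(y_0)) - f^N(y_0) \,=\, \int_{y_0}^{h(y_0)} Df^N \,\geq\, \delta \cdot \inf_I Df^N,
\]
while $|J_N| = \int_I Df^N \leq |I| \sup_I Df^N$. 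Combining these with the distortion estimate produces a constant $c := e^{-V}\delta/|I| > 0$, independent of $N$, such that $|\phi_N(x_0) - x_0| \geq c\,|J_N|$.

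Finally, to convert this displacement lower bound into a $C^1$ lower bound, I use that $\phi_N$ fixes the left endpoint $f^N(p)$ of $J_N$ (being supported on $J_N$), so integrating $D\phi_N - 1$ gives
\[
|\phi_N(x_0) - x_0| \,=\, \left|\int_{f^N(p)}^{x_0} (D\phi_N(t) - 1)\,dt\right| \,\leq\, |J_N| \cdot \|D\phi_N - 1\|_\infty.
\]
Combining the two inequalities yields $\|D\phi_N - 1\|_\infty \geq c > 0$ uniformly in $N$, which is the claim. There is no serious obstacle: the entire argument rests on the observation that a $C^1$-small perturbation of the identity supported on a small interval must move every point by a correspondingly small fraction of that interval's length, while bounded distortion prevents $f^N$ from dragging the initial displacement $\delta = h(y_0) - y_0$ below a fixed fraction of $|J_N| = |f^N(I)|$.
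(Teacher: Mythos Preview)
Your proof is correct and takes a genuinely different route from the paper's. Both arguments rest on the same bounded-distortion estimate $|\log Df^N(t_1)-\log Df^N(t_2)|\le V$ on the fundamental domain, but you exploit the nontriviality of $h$ through a \emph{displacement} $\delta=|h(y_0)-y_0|$, whereas the paper exploits it through a \emph{derivative}: it first passes to a power $h^k$ so that $Dh^k(q)>e^{2V}$ at some point $q$, then argues by contradiction that if $\|D(f^Mhf^{-M})\|_\infty$ were ever $\le e^{V/k}$, the chain rule would force $\|D(f^Mh^kf^{-M})\|_\infty\le e^V$, while a direct computation at $f^M(q)$ gives the opposite inequality.

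Your approach is more elementary and direct: it avoids the auxiliary power $h^k$ and the contradiction, and it produces an explicit uniform lower bound $c=e^{-V}\delta/|I|$ for $\|D\phi_N-1\|_\infty$. The paper's approach, on the other hand, works purely at the level of derivatives and yields a point where $D\phi_N$ itself exceeds $e^{V/k}$; this is slightly sharper information (a pointwise multiplicative bound rather than a sup-norm bound on $D\phi_N-1$), which can be convenient when one later needs to compare $\log D\phi_N$ with variations, as in the application to Proposition~\ref{extension-bump}. Either argument suffices for the lemma as stated.
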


\begin{proof}
Denote $V := \var (\log Df)$, and let $k \geq 1$ be such that $Dh^k (q) > e^{2V}$ holds for some $q \!\in\! [p_0, p_1]$. We claim that, for each $N \geq 0$, there exists a point $x_N \in [p_0, p_1]$ such that $D (f^N h f^{-N}) (x_N) > e^{V/k}$. Assume otherwise. Then, for a certain $M \geq 0$, one would have $ \| D (f^M h f^{-M} ) \|_{\infty} \leq e^{V / k}$, which by the chain rule would yield 
$ \| D (f^M h^k f^{-M} ) \|_{\infty} \leq e^{V}$. However, at the point $q_N \!:=\! f^N (q)$, we have $D (f^N h^k f^{-N}) (q_N) > e^V$. Indeed,
\begin{small}\begin{eqnarray*}
\log (D (f^N h^k f^{-N}) (q_N)) 
&=& \log Df^N (h^k (q) ) + \log Dh^k (q) - \log Df^N (q) \\
&\geq& \log Dh^k (q)  - \big| \log Df^N (h^k (q) ) - \log Df^N (q) \big| \\
&> &  2V - \sum^{N-1}_{i=0} \big| \log Df (f^i (h^k (q))) - \log Df (f^i(q)) \big|. 
\end{eqnarray*}
\end{small}Since both $q$ and $h^k (q)$ lie in $[p_0, p_1]$ and this interval is a fundamental domain of $f$, 
$$\sum^{N-1}_{i=0} \big| \log Df (f^i (h^k (q))) - \log Df (f^i(q)) \big| \leq \var (\log Df; [p_0,1]) \leq V.$$
We thus conclude that \, $\log (D (f^N h^k f^{-N}) (q_N))  > V,$ \, as announced.
\end{proof}

\vspace{0.1cm}

\noindent{\em Proof of Proposition \ref{extension-bump}.}
We may write $g = fh$, where $h$ is a nontrivial diffeomorphism supported on the interval of endpoints $p$ and $f(p)$. For sake of concreteness, we assume $p < f(p)$, and for each $n \in \mathbb{Z}$ we denote $p_n := f^n (p) = g^n (p)$. Since equality (\ref{as-limit}) holds for both $f$ and $g$, we have
\begin{small}
$$\dist (f) = \lim_{N \to \infty} \mathrm{var} \big( \log D f^{2N} ; [p_{-N},  p_{-N+1}] \big),
\qquad 
\dist (g) = \lim_{N \to \infty} \mathrm{var} \big( \log D g^{2N} ; [p_{-N}, p_{-N+1}] \big) .$$
\end{small}Now, for each $x \in [p_{-N}, p_{-N+1}]$ we have
$$D f^{2N} (x) = Df^N (x) \cdot Df^N (f^N(x)), \qquad Dg^{2N} (x) = Df^N (x) \cdot Dh (f^N (x)) \cdot Df^N (hf^N(x)).$$
Thus, for all \, $p_{-N} \leq a_0 < a_1 < \ldots < a_k \leq p_{-N+1}$, \, we have that 
\begin{equation}\label{cal-dist}
\sum_{i} \big| \log Dg^{2N} (a_{i+1}) - \log Dg^{2N} (a_i) \big| 
\end{equation}equals\begin{small}
$$\sum_i \big| [ \log Df^N (a_{i+1}) - \log Df^N (a_i)] + [ \log Df^N (h(b_{i+1})) - \log Df^N (h(b_i))] + [\log Dh (b_{i+1}) - \log Dh (b_i)] \big|.$$
\end{small}where $b_j := f^N (a_j)$. By the triangle inequality, the difference between this expression (and hence of (\ref{cal-dist})) and 
\begin{small}
$$\sum_i \big|  [ \log Df^N (h(b_{i+1})) - \log Df^N (h(b_i))] - [ \log Df^N (b_{i+1}) - \log Df^N (b_i)] + [\log Dh (b_{i+1}) - \log Dh (b_i)] \big|$$
\end{small}is at most 
$$\sum_i \big| \log Df^{2N} (a_{i+1}) - \log Df^{2N} (a_i) \big| \leq \var \big( \log Df^{2N}; [p_{-N}, p_{-N+1}] \big).$$
Since the latter quantity converges to $\dist (f) = 0$ as $N$ goes to infinity, taking the supremum over all $a_i < a_{i+1}$ and passing to the limit in $N$, we conclude that 
\begin{small}\begin{eqnarray*}
\dist (g) 
&=& \lim_{N \to \infty} \var \big( \log Df^N \circ h - \log Df^N + \log Dh ; [p_0, p_1] \big) \\
&=& \lim_{N \to \infty} \var \big( \log D (f^N h f^{-N}) \circ f^{N} ; [p_{0}, p_{1}] \big) 
\,\,\, = \,\,\, \lim_{N \to \infty} \var \big( \log D (f^N h f^{-N})  ; [p_{N}, p_{N+1}] \big).
\end{eqnarray*}
\end{small}Therefore, the fact that $\dist (g) > 0$ directly follows from the preceding lemma.
$\hfill\square$ 

\vspace{0.1cm}

\begin{rem} It is not hard to see that any two diffeomorphisms $f,g$ in $\mathrm{Diff}_+^{2,\Delta} ([0,1])$ as in the previous lemma are conjugate by a bi-Lipschitz homeomorphism (see \cite[Theorem 3.6.14]{libro} for an idea of proof of this fact; again, this mostly relies on Kopell's work). Hence, Theorem \ref{t:invariant} has no extension in this direction.
\end{rem}


\section{Back to the ``conjugacy problem''}
\label{s:reste}

As we mentioned in the Introduction, the asymptotic distortion was introduced in \cite{mio2} to deal with the classical ``conjugacy problem'' in $C^{1+\mathrm{bv}}$ regularity, more precisely, to determine when the conjugacy class of a given diffeomorphism contains the identity in its closure. This problem is perhaps even more relevant for \emph{actions} rather than for single diffeomorphisms, notably in relation to {\em foliation theory}, where the role of the group is played by the holonomy pseudogroup. In this context, conjugacies become a natural tool to produce deformations of foliations. Let us recall in this direction that it is a longstanding open problem whether the space of codimension-1 foliations on a given manifold is locally path connected or not.

Concerning group actions on 1-dimensional manifolds, we can mention that the proof in 
\cite{mio1} of the path-connectedness of the space of $\Z^k$ actions by $C^1$ orientation-preserving diffeomorphisms of a compact 1-manifold involves conjugacies in a crucial way. In particular, it is proved therein that if \emph{finitely many} commuting diffeomorphisms $f_1, \ldots, f_k$ of $[0,1]$ are such that all their fixed points are parabolic, then there exists a sequence of $C^1$ diffeomorphisms $h_n$ of $[0,1]$ for which $h_n f_i h_n^{-1}$ converges to the identity in the $C^1$ topology for all $1 \leq i \leq k$ (and thus the same holds for $h_n g h_n^{-1}$ for every $g$ in the Abelian group generated by $f_1, \ldots, f_k$). 

The proof from \cite{mio1} crucially uses the finite generation hypothesis, and does not cover, for example, the case of 1-parameter families of commuting diffeomorphisms. Below we provide a first result in the latter direction, which reproves the previous statement in the case where one of the $f_i$'s has no fixed point in $(0,1)$. Notice that these results hold in $C^1$ regularity, as opposed to the rest of the paper, where the involved regularities are mainly $C^{1+\mathrm{bv}}$ and $C^2$. However, this will serve as a motivation for later considering the case of flows and vector fields. We will finally go back to the $C^2$ conjugacy problem for single diffeomorphisms, namely Question \ref{q:rapproch-C2} from the Introduction, and prove Theorem \ref{t:rapproch-C2}.

\vspace{0.2cm}

\begin{prop} \label{calculo-general}
If $f \in \mathrm{Diff}_+^{1,\Delta} ([0,1])$ is tangent to the identity at the endpoints, then there exists a sequence of $C^1$ diffeomorphisms $h_n$ of $[0,1]$ such that, for each $C^1$ diffeomorphism $g$ of $[0,1]$ that commutes with $f$, the sequence $h_n g h_n^{-1}$ converges to the identity in the $C^1$ topology.
\end{prop}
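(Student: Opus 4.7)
The plan is to reduce the problem to controlling the push-forward of the Szekeres vector field. By the Szekeres--Kopell theorem, $f\in\Diff^{1,\Delta}_+([0,1])$ tangent to the identity at the endpoints admits a unique $C^1$ generating vector field $X$ on $[0,1)$ with $DX(0)=0$, and its $C^1$ centralizer in $\Diff^1_+([0,1))$ coincides with the Szekeres flow $\{f_t\}_{t\in\R}$ of $X$; an analogous statement holds on $(0,1]$ with the right vector field $Y$. Hence any $g$ commuting with $f$ in $\Diff^1_+([0,1])$ is of the form $g=f_\lambda$ for some $\lambda\in\R$, and by the identity $(h_n f_\lambda h_n^{-1})^{-1}=h_n f_{-\lambda}h_n^{-1}$ one may assume $\lambda\ge 0$.

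I would take the conjugators $\bar h_n$ of Proposition~\ref{segunda-prop}, but choose the cutoff points $y_n\to 0$ and $z_n\to 1$ in the sharper way
\[
X(y_n)=\max_{[0,y_n]}X \qquad \mbox{and}\qquad Y(z_n)=\max_{[z_n,1]}Y,
\]
which is always possible (pick a leftmost argmax of $X$ on an auxiliary interval $[0,t_n]$ with $t_n\to 0$, and symmetrically for $z_n$). By construction, on the central image interval $[\bar h_n(y_n),\bar h_n(z_n)]$ the push-forward $Z_n:=(\bar h_n)_*X$ is identically equal to a small constant $c_n\to 0$, hence $DZ_n\equiv 0$ there. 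On the left affine boundary, a chain-rule computation gives $|Z_n(\bar h_n(x))|=(c_n/X(y_n))X(x)\le c_n$ by the maximality of $X(y_n)$, and $|DZ_n(\bar h_n(x))|=|DX(x)|\le\sup_{[0,y_n]}|DX|\to 0$ by continuity of $DX$ and the parabolicity condition. Hence $Z_n\to 0$ in $C^1$ on $[0,\bar h_n(z_n)]$, and a standard Gr\"onwall estimate shows that the time-$\lambda$ flow of $Z_n$, which coincides with $\bar h_n f_\lambda\bar h_n^{-1}$ on this region, converges uniformly to the identity in $C^1$.

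On the right affine boundary $[\bar h_n(z_n),1]$, for $\lambda\ge 0$ the map $f_\lambda$ preserves the affineness region (since $f_\lambda([z_n,1])\subset [z_n,1]$), so a direct computation using the affineness of $\bar h_n$ there gives
\[
D(\bar h_n f_\lambda\bar h_n^{-1})(y)=Df_\lambda(\bar h_n^{-1}(y)) \qquad \mbox{for } y\in[\bar h_n(z_n),1].
\]
Since $Df_\lambda$ is continuous on $[0,1]$ with $Df_\lambda(1)=1$ (itself a consequence of parabolicity at $1$, i.e., $DY(1)=0$), the right-hand side tends to $1$ uniformly as $z_n\to 1$. Combining with the previous paragraph yields the desired $C^1$ convergence $\bar h_n g\bar h_n^{-1}\to\mathrm{id}$, completing the proof.

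The main obstacle I anticipate is the $C^0$ control of $Z_n$ on the left affine boundary: in $C^1$ regularity the Szekeres vector field $X$ may a priori oscillate near $0$, so the naive orbit-based choice $y_n=f^{-k_n}(p)$ used in Proposition \ref{segunda-prop} does not immediately give a bound $\sup_{[0,y_n]}X\le CX(y_n)$. The ``leftmost argmax'' trick bypasses this issue at no cost. A potential secondary concern is the regularity of $X$ at the right endpoint $1$ (where it is only $C^0$ in general), but this is entirely avoided by exploiting the affineness of $\bar h_n$ on $[z_n,1]$ together with the parabolicity of $g$ at $1$.
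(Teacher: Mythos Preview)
Your approach has a fundamental gap: you invoke the Szekeres--Kopell theory for $f\in\Diff^{1,\Delta}_+([0,1])$, but this theory requires at least $C^2$ regularity (see the paper's own recollection in \S\ref{s:mather}: ``Let $f$ be a $C^r$ diffeomorphism of $[0,1)$, $r\ge 2$\ldots''). For a merely $C^1$ diffeomorphism there is in general no $C^1$ generating vector field $X$, and Kopell's description of the $C^1$ centralizer as a one-parameter flow is simply unavailable. Hence neither the conjugators $\bar h_n$ of Proposition~\ref{segunda-prop} (which are built from $D\log X$) nor the identification $g=f_\lambda$ can be used here. Your entire argument rests on this nonexistent structure.

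Even if one retreated to the $C^2$ setting, there are further issues. First, Proposition~\ref{segunda-prop} concerns the \emph{trivial Mather invariant} case, where a single vector field $X=Y$ exists on all of $[0,1]$; for a general $f$ one has two distinct fields $X$ and $Y$, and the flow $f_\lambda$ of $X$ is only a $C^1$ diffeomorphism of $[0,1)$, not of $[0,1]$, so your claim that $Df_\lambda(1)=1$ and the computation on the right boundary are not justified. Second, the assertion $f_\lambda([z_n,1])\subset[z_n,1]$ for $\lambda\ge 0$ goes the wrong way when $f(x)>x$.

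The paper's proof avoids all of this by working directly in $C^1$. It uses the elementary conjugators $h_n$ of \eqref{def-h_n}, which require nothing beyond $C^1$ regularity of $f$, and computes explicitly that
\[
\log D(h_n g h_n^{-1})(y)=\frac{1}{n}\sum_{i=0}^{n-1}\log Dg\big(f^i(h_n^{-1}(y))\big).
\]
It then shows, by a bare-hands argument using only commutation and continuity of $Dg$, that every fixed point of $g$ is parabolic; since the ergodic averages on the right are controlled by the invariant measures of $f$ (supported on $\{0,1\}$), they tend to $0$ uniformly. No vector field, no Kopell lemma, no Mather invariant is needed.
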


\begin{proof} 
Following \S \ref{s:dist}, as in (\ref{def-h_n}) we let $h_n$ be the $C^1$ diffeomorphism of $[0,1]$ whose derivative equals 
$$D h_n (x) := \frac{ \big[ \prod_{i=0}^{n-1} Df^i (x) \big]^{1/n} }{\int_{0}^1 \big[ \prod_{i=0}^{n-1} Df^i (t) \big]^{1/n} dt}.$$ 
As in the proof of Proposition \ref{doble-criterio}, letting $y := h_n (x)$, we compute
\begin{small}
\begin{eqnarray*}
D (h_n g h_n^{-1}) (y)
&=&
\frac{Dh_n (g (x))}{D h_n (x)}  \cdot D g (x) \\
&=&
\left[ \frac{ Df (g(x)) \cdot Df^2 (g(x)) \cdots Df^{n-1} (g(x)) }{Df (x) \cdot Df^2 (x) \cdots Df^{n-1} (x)} \right]^{1/n} Dg (x)\\
&=&
\left[ \frac{ Dg(x) \cdot Df (g(x)) Dg(x) \cdot Df^2 (g(x)) Dg(x) \cdots Df^{n-1} (g(x)) Dg (x) }{Df (x) \cdot Df^2 (x) \cdots Df^{n-1} (x)} \right]^{1/n} \\
&=&
\left[ \frac{ Dg (x) \cdot D (f g) (x) \cdot D (f^2 g) (x) \cdots D (f^{n-1} g) (x) }{Df (x) \cdot Df^2 (x) \cdots Df^{n-1} (x)} \right]^{1/n} \\
&=&
\left[ \frac{ Dg (x) \cdot D (g f) (x) \cdot D (g f^2) (x) \cdots D (g f^{n-1}) (x) }{Df (x) \cdot Df^2 (x) \cdots Df^{n-1} (x)} \right]^{1/n} \\
&=&
\left[ \frac{ Dg (x) \cdot Dg(f(x)) Df (x) \cdot D g (f^2 (x)) Df^2 (x) \cdots D g (f^{n-1} (x)) Df^{n-1}(x) }{Df (x) \cdot Df^2 (x) \cdots Df^{n-1} (x)} \right]^{1/n} \\
&=&
\left[ Dg (x) \cdot Dg (f(x)) \cdot Dg (f^2(x)) \cdots Dg (f^{n-1}(x)) \right]^{1/n}.
\end{eqnarray*}
\end{small}Thus,
$$\log \big( D (h_n g h_n^{-1}) (y) \big) = \frac{1}{n} \sum_{i=0}^{n-1} \log \big( Dg (f^i h_n^{-1}(y)) \big).$$

The invariant probability measures of $g$ are concentrated at the set of its fixed points. Therefore, if we show that all these points are parabolic, then this will ensure that expression 
$$\frac{1}{n} \sum_{i=0}^{n-1} \log (Dg (f^i (z)))$$
uniformly converges to zero as $n$ goes to infinity, which in view of the previous computation will imply the announced convergence to the identity of the conjugates. 

Thus, in order to close the proof, it suffices to show that every $C^1$ diffeomorphism $g$ that commutes with $f$ is $C^1$ tangent to the identity at each endpoint. To prove this, first assume that $g$ has a fixed point $a \in (0,1)$. Then, by commutativity, all the points $a_n := f^n(a)$, with $n \in \mathbb{Z}$, are fixed by $g$, with the same value for the derivative. Changing $f$ by $f^{-1}$ if necessary so that $f(x) > x$ for all $x \in (0,1)$, we obtain
$$Dg (0) = \lim_{n \to \infty} \frac{g (a_{-n}) - g (0)}{a_{-n} - 0} = 1 \qquad \mbox{and} \qquad Dg (1) = \lim_{n \to \infty} \frac{g (1)- g (a_n)}{1 - a_n} = 1.$$
This also implies that $Dg (a) = 1$, otherwise we would have $Dg (a_n) = Dg (a) \neq 1$ for all $n \in \mathbb{Z}$, and since the sequence $a_n$ accumulate at both endpoints, this would be in contradiction with $Dg(0)=Dg(1)=1.$

Suppose now for a contradiction that $Dg (0) \neq 1$, say $Dg (0) > 1$ after changing $g$ by $g^{-1}$ if necessary (the case where $Dg(1) \neq 1$ can be treated in a similar way). Given $n \geq 1$ we have $Dg (0) > 1 = Df^n (0)$, hence $g(x) > f^n (x)$ for all small-enough $x > 0$. However, if we fix $a \in (0,1)$, there is $N \in \mathbb{N}$ such that $f^N (a) > g (a)$. This implies that $f^{-N} g$ has a fixed point in $(0,a)$ for small-enough $a>0$. Since this map commutes with $f$, the case just settled establishes that $1 = D (f^{-N} g) (0) = D g (0)$, which is a contradiction.
\end{proof}

The previous proposition applies to the flow of a $C^1$ vector field that is nonvanishing at interior points and $C^1$ flat at the endpoints. Below we prove a related result for this case under a mild extra hypothesis.

\vspace{0.1cm}

\begin{prop} \label{champ-c-1-conj}
Let $X$ be a $C^1$ vector field on $[0,1]$ vanishing only at the endpoints and 
inducing a flow of diffeomorphisms whose fixed points are all parabolic. If the flow of $X$ contains a nonidentity $C^2$ diffeomorphism $f$, then there exists a sequence of $C^2$ diffeomorphisms $h_n \! : [0,1] \to [0,1]$ such that the ``conjugate'' vector fields
$$ X_n (x) := (D h_n \cdot X) \circ h_n^{-1} (x)$$
converge to zero in the $C^1$ topology.
\end{prop}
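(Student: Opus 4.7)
My plan is to use the $C^2$ conjugators $h_n$ defined by Equation~(\ref{def-h_n}) applied to the given $C^2$ flow element $f=f^{t_0}$ (with $t_0\neq 0$). Three immediate observations: (i) since $X$ is $C^1$ on $[0,1]$ with $DX(0)=DX(1)=0$ and $f=f^{t_0}$, one has $Df(0)=e^{t_0DX(0)}=1$ and $Df(1)=1$, \emph{i.e.}, $f$ has parabolic fixed points; (ii) the left and right Szekeres vector fields of $f$ both coincide with $t_0X$, so $f$ has trivial Mather invariant; (iii) since $f$ is $C^2$, each $Df^i$ is $C^1$, so $Dh_n$ (being a $1/n$-th power of a positive $C^1$ function) is $C^1$, meaning $h_n\in \Diff^2_+([0,1])$. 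From (i), (ii) and Theorem~\ref{t:vanish}, $\dist(f)=0$.

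The key observation, obtained from the flow-invariance identity $Df^i=(X\circ f^i)/X$, is the symmetric formula
\begin{equation*}
Dh_n(x)\cdot X(x)\;=\;\frac{1}{Z_n}\Bigl[\prod_{i=0}^{n-1}X(f^i(x))\Bigr]^{\!1/n},
\end{equation*}
where $Z_n$ is the normalizing integral from (\ref{def-h_n}). Thus $X_n:=(h_n)_*X$ satisfies $X_n\circ h_n$ equal to the right-hand side, and differentiating $X_n(y)=Dh_n(h_n^{-1}(y))\,X(h_n^{-1}(y))$ yields
\begin{equation*}
DX_n(y)\;=\;\frac{D^2h_n}{Dh_n}(h_n^{-1}(y))\cdot X(h_n^{-1}(y))\;+\;DX(h_n^{-1}(y)).
\end{equation*}

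To prove $X_n\to 0$ uniformly on $[0,1]$, I would combine Proposition~\ref{derivada-interior} (which gives $Dh_n\to 0$ uniformly on compact subsets of $(0,1)$, hence $X_n\to 0$ in the bulk) with the parabolic-decay of $X$ at the endpoints: the condition $DX(0)=DX(1)=0$ yields $X(x)=o(x)$ near $0$ and $X(x)=o(1-x)$ near $1$, and in the symmetric formula above, when $x$ is close to an endpoint, most of the factors $X(f^i(x))$ are small, making the geometric mean decay uniformly faster than $Z_n$. To get $DX_n\to 0$ uniformly, the second term $DX\circ h_n^{-1}$ tends to $0$ uniformly since $DX$ is continuous and vanishes at the endpoints, while the first term is controlled via the cocycle identity~(\ref{relacion-cociclo}) applied to the definition of $h_n$: analogously to the proof of Proposition~\ref{segunda-prop}, $D^2h_n/Dh_n$ is close to $-D\log X$ up to a defect that tends to $0$ in $L^1$, so that $(D^2h_n/Dh_n)\cdot X$ nearly equals $-DX$ and cancels the second term to leading order.

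The hard part will be quantifying both convergences uniformly across the transition zone between the bulk of $(0,1)$, where Proposition~\ref{derivada-interior} provides control ($Dh_n\to 0$), and the endpoint neighborhoods, where $Dh_n\to\infty$ by Proposition~\ref{derivada-extremos} but $X$ and $DX$ vanish to compensate. Making this compensation quantitative on the natural scale $1/Dh_n(0)$ will require sharpening the estimates underlying Propositions~\ref{derivada-interior}, \ref{derivada-extremos}, and the Lemma following Proposition~\ref{derivada-extremos}, so as to extract uniform convergence rates. If this direct route proves too delicate, an alternative would be to start from the $C^{1+\mathrm{ac}}$ conjugators $\bar h_n$ of Remark~\ref{conjugators-h_n} (built from $u_n=D\log X\cdot 1_{[y_n,z_n]}$), for which the simple piecewise formula (\ref{derivadas-en-3}) combined with (\ref{e:conj-trans}) makes $(\bar h_n)_*X$ literally constant equal to $1/(k_n+\ell_n)\to 0$ on the middle interval and a rescaled version of $X$ on the outer intervals (which tend to $0$ in $C^1$ by parabolicity), then smooth the discontinuities of $u_n$ at $y_n,z_n$ over arbitrarily thin transition intervals to upgrade $\bar h_n$ to $C^2$ conjugators while preserving the $C^1$ convergence of the pushforwards.
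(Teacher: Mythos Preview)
You pick the right conjugators $h_n$ and arrive at the correct formula
$$DX_n(h_n(x)) \;=\; \frac{D^2 h_n}{Dh_n}(x)\cdot X(x) + DX(x),$$
but your plan for showing this tends to $0$ has a genuine gap. The claim that ``the second term $DX\circ h_n^{-1}$ tends to $0$ uniformly since $DX$ is continuous and vanishes at the endpoints'' is simply false: $h_n^{-1}$ is a bijection of $[0,1]$, so $\sup_y |DX(h_n^{-1}(y))| = \sup_x |DX(x)|$, a fixed positive constant independent of $n$. You then suggest the first term ``nearly equals $-DX$'' up to an $L^1$ defect, but $L^1$ smallness of the defect would not give the \emph{uniform} convergence of $DX_n$ that you need. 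The detour through Propositions~\ref{derivada-extremos} and~\ref{derivada-interior} to control $X_n$ separately is also unnecessary and, as you yourself note, would require delicate quantitative sharpenings.

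The paper avoids all of this by a clean exact identity that you come close to but do not carry out. From the definition of $h_n$ one has
$$\frac{D^2 h_n}{Dh_n}(x) = \frac{1}{n}\sum_{i=0}^{n-1} \frac{D^2 f^i}{Df^i}(x),$$
and differentiating the invariance relation $X\circ f^i = Df^i \cdot X$ gives
$$X(x)\cdot\frac{D^2 f^i}{Df^i}(x) = DX(f^i(x)) - DX(x).$$
Substituting, the two terms in your formula combine \emph{exactly} (no defect, no $L^1$ approximation) into the Ces\`aro average
$$DX_n(h_n(x)) = \frac{1}{n}\sum_{i=0}^{n-1} DX(f^i(x)),$$
which converges uniformly to $0$ because $DX$ is continuous and vanishes at both endpoints, where all invariant probability measures of $f$ are concentrated. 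Since $X_n(0)=X_n(1)=0$, uniform convergence of $DX_n$ to $0$ immediately gives $X_n\to 0$ in $C^1$; no separate argument for $X_n\to 0$ is needed.
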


Some comments before passing to the proof. A direct consequence of the proposition is that, if $f^t$ denotes the flow of $X$, then the conjugate diffeomorphisms $h_n f^t h_n^{-1}$ converge to the identity in the $C^1$ topology. By the choice of $h_n$ below, 
the convergence of $h_n f h_n^{-1}$ will actually happen in the $C^{1+\mathrm{bv}}$ topology. However, we do not know whether the $C^2$ convergence holds; otherwise, we would have a positive answer for Question \ref{q:rapproch-C2}, and the $C^1$ convergence of $X_n$ to the zero vector field would directly follow from the continuity -- in a broad sense -- of the Szekeres vector field with respect to the associated diffeomorphism (cf.~\cite{yoccoz}). In proposition~\ref{t:champ-C2}, we obtain such a $C^2$ convergence under the assumption that the initial vector field is itself $C^2$ (this is shown via a completely different approach). 

\vspace{0.1cm}

\begin{proof} 
For all $x \in [0,1]$ we have
\begin{equation}\label{eq-varrho}
X (f(x)) = Df (x) \cdot X (x).
\end{equation}
Let again $h_n$ be the $C^2$ diffeomorphism defined by (\ref{def-h_n}). We claim that this sequence $h_n$ satisfies the desired property. Indeed, let us denote 
\begin{equation}\label{defn}
X_n (x) := D h_n (h_n^{-1} (x)) \cdot X (h_n^{-1} (x)).
\end{equation}
Since $X_n (0) = X_n (1) = 0$, we only need to check that $D X_n$ uniformly converges to zero. To do this, we compute:
\begin{eqnarray*}
D X_n (x) 
&=&
D^2 h_n (h_n^{-1} (x)) \cdot D h_n^{-1} (x) \cdot X (h_n^{-1}(x)) + Dh_n (h_n^{-1} (x)) \cdot D X (h_n^{-1}(x)) \cdot Dh_n^{-1} (x) \\
&=& 
\frac{D^2 h_n (h_n^{-1}(x))}{D h_n (h_n^{-1}(x))} \cdot X (h_n^{-1} (x)) + D X (h_n^{-1} (x)),
\end{eqnarray*}
hence
$$D X_n (h_n (x)) = \frac{D^2 h_n (x)}{D h_n (x)} \cdot X (x) + D X (x).$$
Now, a straightforward computation starting from (\ref{def-h_n}) yields 
$$\frac{D^2 h_n (x)}{D h_n (x)} = \frac{1}{n} \sum_{i=0}^{n-1} \frac{D^2 f^i (x)}{D f^i (x)},$$
thus
\begin{equation}\label{eq-inter}
D X_n (h_n (x)) = \frac{X (x)}{n} \sum_{i=0}^{n-1} \frac{D^2 f^i (x)}{D f^i (x)} + D X (x).
\end{equation}
Next, a repeated application of (\ref{eq-varrho}) yields $\, X (f^i (x)) = Df^i (x) \cdot X (x) \,$ for each $i \geq 0$. Taking derivatives, this gives
$$D X (f^i(x)) \cdot Df^i (x) = D^2 f^i (x) \cdot X (x) + D f^i (x) \cdot D X (x),$$
hence
$$D X (f^i (x)) - D X (x) = X (x) \cdot \frac{D^2 f^i (x)}{D f^i (x)}.$$
Introducing this equality into (\ref{eq-inter}), we obtain
$$D X_n (h_n (x)) 
= \frac{1}{n} \sum_{i=0}^{n-1} \big[ D X (f^i (x)) - D X (x) \big] + D X (x) 
= \frac{1}{n} \sum_{i=0}^{n-1} D X ( f^i (x) ).$$
The proof is concluded by noticing that, since $D X$ vanishes at each fixed point of $f$, 
the last expression uniformly converges to zero. (This follows by a classical argument of localization of the invariant probability 
measures at the set of fixed points for interval homeomorphisms, already used in the proof of the previous proposition; 
see also relation (1.2) in \cite{Polt-Sod}.)
\end{proof}

\vspace{0.01cm}

\begin{qsintro} 
Is the hypothesis of the existence of a nonidentity $C^2$ diffeomorphism $f$ in the flow of $X$ necessary for the validity of the previous proposition~?
\end{qsintro}

\vspace{0.15cm}

We now go back to the conjugacy problem for a single diffeomorphism in $C^2$ regularity, namely Question \ref{q:rapproch-C2}. The following result gives a first partial answer: if $f\in\Diff^{2,\Delta}_{+}([0,1])$ has parabolic fixed points and is the time-$1$ map of a $C^2$ vector field, then it does contain the identity in the closure of its conjugacy class. Combined with the subsequent Proposition~\ref{p:C1toC2}, 
this implies Theorem \ref{t:rapproch-C2}.

\vspace{0.1cm}

\begin{prop}
\label{t:champ-C2}
Assume that $f \in \Diff^{r,\Delta}_{+}([0,1])$ has parabolic fixed points and is the time-$1$ map of a $C^r$ vector field $X$, where $r \geq 2$. Then there exists a sequence of $C^r$ vector fields  $X_n$ that converge to the zero vector field in the $C^r$ topology and whose time-$1$ maps $f_n$ (which thus $C^r$-converge to the identity) are $C^r$-conjugate to $f$.
\end{prop}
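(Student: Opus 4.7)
The plan is to construct $C^r$ diffeomorphisms $h_n$ of $[0,1]$ such that the pushforward vector fields $X_n := (h_n)_* X$ are $C^r$ and converge to $0$ in the $C^r$ topology; then the time-$1$ maps $f_n = \exp(X_n)$ coincide with $h_n f h_n^{-1}$ and are $C^r$-conjugate to $f$ by construction.

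The naive candidate would be the flow-rescaling
$$h_n(x) := \psi(\psi^{-1}(x)/n), \qquad \psi(t) := f^t(p),\ p\in(0,1).$$
On $(0,1)$ this $h_n$ is $C^r$ and satisfies the functional equation $nX(x)Dh_n(x) = X(h_n(x))$, whence $(h_n)_* X = X/n$, which goes to $0$ in $C^r$ trivially; moreover $h_n f h_n^{-1} = f^{1/n}$. The obstruction is that $h_n$ is not in general $C^r$ at the endpoints: asymptotic analysis of $h_n(x) = T^{-1}(T(x)/n)$ with $T(x) := \int^x dy/X(y)$ shows that $T$ typically contains a logarithmic term coming from the simple-pole contribution of $1/X$ at the parabolic fixed point (the formal \'Ecalle-type residue of the germ of $X$), and this generates terms of the form $x^k\log x$ in $h_n$ that obstruct $C^r$-regularity for $r\geq 2$.

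The proof proceeds by a careful, \emph{by hand} construction of a $C^r$ conjugator $h_n$ taking $f$ to a diffeomorphism $f_n$ close to the identity. First, by a preliminary $C^r$-conjugation of $f$ (allowed since the desired conclusion is invariant under such conjugations), $X$ is put into a convenient normal form near each endpoint, in which the residue invariant is made explicit. Second, $h_n$ is constructed locally near each endpoint as a careful modification of the flow-rescaling: the $x^k\log x$ obstruction is controlled by choosing the local $h_n$ so that its Taylor expansion to order $r$ at the endpoint agrees with a genuinely $C^r$ polynomial, rather than with the naive rescaling, while the pushforward $(h_n)_*X$ retains the correct smallness. The two local models are then glued into a single $C^r$ diffeomorphism of $[0,1]$ using the flexibility provided by the $C^r$ flow of $X$, which exists and consists of $C^r$ diffeomorphisms by the hypothesis on $X$, and using the trivial Mather invariants of the diffeomorphisms in play (in the spirit of Lemma~\ref{l:Cr-conj}) to patch things in the interior.

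The main obstacle is the verification that $h_n$ so constructed is indeed a $C^r$ diffeomorphism \emph{and} that the pushforward $X_n := (h_n)_* X$ is simultaneously $C^r$ and $C^r$-small; this requires tracking the Taylor expansions of all involved objects up to order $r$ both at the germ level near the endpoints and globally via the flow structure, and constitutes the technical heart of the ``by hand'' argument.
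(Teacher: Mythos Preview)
Your proposal is an outline rather than a proof: you correctly identify the obstruction to the naive flow-rescaling (the logarithmic terms in $\psi^{-1}$ at a parabolic fixed point), but the fix you propose --- ``careful modification'' of the local conjugator so that its $r$-jet becomes polynomial while $(h_n)_*X$ stays $C^r$-small --- is precisely the part that needs to be carried out, and you explicitly leave it undone. As written, there is no argument.

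More importantly, the paper's route is both different and much simpler, and it sidesteps the logarithmic analysis entirely. Two points:

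\begin{itemize}
\item[(i)] The conjugator near the endpoints is not a flow-rescaling but a plain \emph{homothety}: $h_n(x)=nx$ on $[0,\tfrac1{3n}]$ (and symmetrically near $1$). Under this, the Szekeres vector field of $g_n:=h_nfh_n^{-1}$ satisfies $D^{k+1}Y_n(x)=n^{-k}D^{k+1}X(x/n)$ for $x\in[0,\tfrac13]$, which is $C^r$-small near the endpoints directly from parabolicity ($DX(0)=0$) --- no Taylor/\'Ecalle residue bookkeeping is needed.
\item[(ii)] The vector field $X_n$ is \emph{not} taken to be the pushforward $(h_n)_*X$. Instead one builds $X_n$ by hand: set $DX_n:=\rho_1\,DY_n$ near the endpoints (so $X_n=Y_n$ on a neighbourhood of $\{0,1\}$) and interpolate affinely on $[\tfrac14,\tfrac34]$ between the two resulting boundary values. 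One checks $X_n>0$ on $(0,1)$ and $X_n\to0$ in $C^r$. The time-$1$ map of $X_n$ is then not $g_n$, but it agrees with $g_n$ near $0$ and $1$ (same germs of generating vector field, which is $C^r$ there), and Lemma~\ref{l:Cr-conj} supplies the $C^r$-conjugacy.
\end{itemize}

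So the conceptual gap in your plan is the insistence that $X_n$ be a global pushforward of $X$ by a single $C^r$ conjugator; dropping this and invoking Lemma~\ref{l:Cr-conj} at the end is what makes the argument short.
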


\begin{proof} 
For every $n\in\N$, let $h_n$ be a $C^r$ diffeomorphism of $[0,1]$ that coincides with the homothety of ratio $n$ on $[0,\frac1{3n}]$ and $[1-\frac1{3n},1]$, and let $g_n := h_n\circ f \circ h_n^{-1}$. We will construct a $C^r$ vector field $X_n$ of small $C^r$ norm (going to $0$ as $n$ goes to infinity) that coincides with the Szekeres vector field $Y_n$ of $g_n$ near $0$ and $1$. By Lemma \ref{l:Cr-conj}, its time-$1$ map $\hat{g}_n$ will be $C^r$-conjugated to $g_n$, and thus to $f$, as required. 

On $[0,\frac14]\cup[\frac34,1]$, we define $X_n$ by letting $DX_n := \rho_1 \, DY_n$, where $\rho_1$ is a smooth function equal to $1$ near the endpoints (on $[0,\frac18]\cup[\frac78,1]$, for concreteness), to $0$ on $[\frac14,\frac34]$, and strictly monotonous on each of the two remaining intervals. 

We claim that $X_n$ does not vanish on $(0,\frac14]\cup[\frac34,1)$. Indeed, by symmetry, it is enough to verify this on $(0,\frac14]$; actually, it suffices to consider the interval $[\frac18,\frac14]$, since $X_n=Y_n$ on $(0,\frac18]$ and $Y_n$ does not vanish on $(0,1)$. Now, for $x\in[\frac18,\frac14]$,
\begin{small}\begin{equation}\label{x-ene}
X_n(x)=X_n(\tfrac18)+\int_{\tfrac18}^x \rho_1 \, DY_n 
= X_n(\tfrac18) + \left[\rho_1 Y_n\right]_{\tfrac18}^x -\int_{\tfrac18}^x Y_n \, D \rho_1 
= (\rho_1Y_n)(x)-\int_{\tfrac18}^x Y_n \, D \rho_1.
\end{equation}
\end{small}Since $\rho_1$ is decreasing on $(\frac18,\frac14)$, letting $\mu := \min_{[1/8, 1/4]} Y_n > 0$, we have on this interval $Y_n \, D \rho_1 \leq \mu \, D \rho_1$. This gives, for all $x \in [\frac18,\frac14]$, 
$$-\int_{\frac18}^x Y_n \, D\rho_1 \geq -\mu \, (\rho_1(x)-0),$$ 
with equality if and only if $Y_n$ is constant equal to $\mu$ on $(\frac18,x)$. If we introduce this into (\ref{x-ene}), we obtain 
$$X_n(x) \geq \rho_1(x)(Y_n(x)-\mu) \geq 0.$$ 
Moreover, the fist inequality above is strict, and hence $Y_n (x)$ is strictly positive, unless $Y_n$ is constant on $(\frac18, x)$, in which case $X_n$ is also constant on $(\frac{1}{8},x)$, hence $X_n(x)=X_n(\frac18)>0$ as well. 

We now let $u_n := X_n(\frac14)$ and $v_n := X_n(\frac34)$ (which, by the discussion above, are positive). On $[\frac14,\frac34]$, we interpolate between $u_n$ and $v_n$ by letting $X_n := \rho_2u_n+(1-\rho_2)v_n$, where $\rho_2$ is a smooth step function equal to $1$ on $[0,\frac14]$ and to $0$ on $[\frac34,1]$. 

We finally check the convergence of $X_n$ towards the zero vector field in the $C^r$ topology. To do this, since $X_n (0) = X_n (1) = 0$, it is enough to show that $D^{r} X_n$ uniformly converges to $0$ on $[0,1]$. On $J:=[0,\frac14]\cup [\frac34,1]$, we have 
$$D^{r} X_n = D^{r-1} (\rho_1 \, DY_n) = \sum_{k=0}^{r-1}\binom{r-1}{k} \, D^{r-1-k}\rho_1 \, D^{k+1}Y_n.$$ 
Now notice that, for $x \in J$, 
$$D^{k+1}Y_n(x) = \frac1{n^k} \, D^{k+1}X_f (y),$$  
where \, $y = x/n$ \, for \, $x \in [0,\frac14]$ \,  and \, $y = 1 - (1-x)/n$ \, for \, $x \in [\frac34 ,1]$. \,  Thus, 
$$\sup_{J}|D^r X_n|
\le \sum_{k=0}^{r-1}\binom{r-1}{k} \sup_J |D^{r-1-k}\rho_1|\cdot \frac1{n^k}\sup_{[0,\frac1{4n}]\cup [1-\frac1{4n},1]}|D^{k+1}X|,$$
and each term in this sum converges to zero as $n$ goes to infinity. (This is obvious for $k > 0$, whereas for $k= 0$ it follows from the fact that $DX(0) = DX (1) = 0$.)

Finally, notice that $\, u_n \!=\! \int_0^{\frac14}DX_n(t)dt \,$ goes to $0$ as $n$ goes to $\infty$ (this immediately follows from the previous discussion), and so does $v_n$. By construction, this implies that $\sup_{[\frac14,\frac34]} |D^r X_n| \xrightarrow[n\to+\infty]{}0,$ thus closing the proof.
\end{proof}

\vspace{0.1cm}
The next result is closely related to Proposition \ref{champ-c-1-conj}. In terms of vector fields, it might seem weaker since here the ``conjugate'' vector fields converge (in the  $C^1$ topology) towards {\em some} $C^2$ vector field rather than simply the zero vector field. However, there is a stronger part in the statement concerning the convergence of the conjugated flow maps: under the extra hypothesis that the times yielding to $C^2$ flow maps form a dense subgroup of $\mathbb{R}$ (and not just a nonempty subset as in Proposition \ref{champ-c-1-conj}), the conjugates of these flow 
maps converge \emph{in the $C^2$ topology}.    

\vspace{0.1cm}

\begin{prop}
\label{p:C1toC2} 
Let $X$ be a $C^1$ vector field on $[0,1]$ with flow $f^t$. Suppose that the set of times $t$ for which $f^t$ is a $C^2$ 
diffeomorphism is a dense subgroup of $\R$. Then there exists a sequence of $C^2$ conjugacies $h_n$ such that $(h_n)_*X$ converges 
in the $C^1$ sense towards a $C^2$ vector field $Y$ and that, for every $C^2$ diffeomorphism $f^\tau$ of the flow of $X$, the conjugate 
map $h_n\circ f^\tau\circ h_n^{-1}$ converges in the $C^2$ sense towards the time-$\tau$ map of~$Y$.
\end{prop}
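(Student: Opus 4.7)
The plan follows the general strategy of Propositions \ref{calculo-general} and \ref{champ-c-1-conj}, adapted so that the conjugators are $C^2$ (using the density of the subgroup $G$ of $C^2$ flow times of $X$) and so that $(h_n)_*X$ converges to a prescribed $C^2$ vector field $Y$ rather than to zero. I would first fix such a $Y$: a $C^2$ vector field on $[0,1]$ vanishing only at the endpoints, with $DY(0) = DX(0)$ and $DY(1) = DX(1)$ and the same sign as $X$ on $(0,1)$. The eigenvalue conditions are forced by $C^1$-invariance of the linearization under the push-forward, and the rest is an elementary construction; I would additionally choose $Y$ to be linear near each endpoint.

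For the $h_n$'s I would use a two-zone construction. On an interior region $K_n = [\epsilon_n, 1-\epsilon_n]$ with $\epsilon_n \to 0$ slowly, take $h_n$ to be a $C^2$ diffeomorphism onto its image that conjugates $X$ to $Y$ exactly, \emph{i.e.} solves the first-order ODE $Dh_n(x)\cdot X(x) = Y(h_n(x))$ on $K_n$. This equation has $C^1$ coefficients on $(0,1)$, and its solutions (determined up to one integration constant, \emph{i.e.} a translation in the Szekeres-like coordinate) are $C^2$. By construction $(h_n)_*X = Y$ on $h_n(K_n)$. On the endpoint zones I would extend $h_n$ by a large-ratio $C^2$ ``dilation'' in the spirit of Proposition~\ref{t:champ-C2}, arranged so that $h_n$ remains a $C^2$ diffeomorphism of $[0,1]$. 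The $C^1$ Taylor expansion $X(x) = DX(0)\, x + o(x)$ then yields, by the same zoom-in computation as in the proof of Proposition~\ref{t:champ-C2}, that $(h_n)_*X$ converges in $C^1$ to the linear vector field $DX(0)\cdot y$ on a neighborhood of $0$, which by our choice of $Y$ agrees with $Y$ on that neighborhood. The analogous statement holds near $1$, yielding the full $C^1$ convergence $(h_n)_*X\to Y$ on $[0,1]$.

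The $C^2$ convergence of $h_n \circ f^\tau \circ h_n^{-1}$ for $\tau\in G$ then splits into two parts. On the interior region the exact conjugation gives $h_n \circ f^\tau \circ h_n^{-1} = Y^\tau$ identically (where $Y^\tau$ is the time-$\tau$ map of $Y$), so the convergence is trivial. On the endpoint zones $f^\tau$ is $C^2$ by the hypothesis $\tau\in G$ and $h_n$ is $C^2$ by construction, so the conjugate is $C^2$; using the cocycle identity \eqref{relacion-cociclo} to expand $D^2(h_n\circ f^\tau\circ h_n^{-1})/D(h_n\circ f^\tau\circ h_n^{-1})$ in terms of $D^2h_n/Dh_n$, $D^2f^\tau/Df^\tau$, and their compositions with $f^\tau$, and exploiting the explicit dilation form of $h_n$, one obtains uniform convergence of this quotient to the corresponding quantity for $Y^\tau$.

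The main obstacle will be the $C^2$ gluing of the interior and endpoint regimes. Although $X$ and $Y$ have matching linear parts at the endpoints, the interior ODE solution need not extend as a $C^2$ diffeomorphism of $[0,1]$ all the way to the endpoints (the ``Sergeraert obstruction''): this is precisely the phenomenon that the $C^1$ centralizer and the $C^2$ centralizer of a time-$1$ map can differ. Bridging the two regimes requires a careful choice of the dilation ratios, of the integration constant of the interior ODE, and of an interpolation window whose width is calibrated to the modulus of continuity of $DX$ at the endpoints. The density of $G$ enters here: it provides arbitrarily small $C^2$ flow times with which one can recenter the interior conjugation, ensuring that the interior and endpoint pieces of $h_n$ can be made to match in $C^2$ as $n\to\infty$, which is what upgrades the $C^1$ convergence of Proposition~\ref{champ-c-1-conj} to the $C^2$ convergence of flow maps claimed here.
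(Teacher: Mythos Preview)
Your two-zone construction has a genuine gap, and it shows up precisely in the case that matters for Theorem~\ref{t:rapproch-C2}, namely parabolic endpoints. You require $DY(0)=DX(0)$ and $DY(1)=DX(1)$ and then ask $Y$ to be \emph{linear} near the endpoints; when $DX(0)=DX(1)=0$ this forces $Y\equiv 0$ on neighborhoods of $0$ and $1$, contradicting your own requirement that $Y$ vanish only at the endpoints. Dropping the linearity does not save the endpoint part: under a dilation $x\mapsto nx$ one has $(h_n)_*X(y)=n\,X(y/n)=n\cdot o(y/n)\to 0$ in the parabolic case, so the pushed vector field converges to $0$ on the endpoint zones, not to any nontrivial $Y$. (Your appeal to the ``zoom-in computation of Proposition~\ref{t:champ-C2}'' is also off: that proposition assumes $X$ is $C^r$, $r\ge 2$, whereas here $X$ is only $C^1$.) The interior piece then cannot be glued to a zero field while keeping $Y$ nonvanishing on $(0,1)$.

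Even setting the parabolic issue aside, the gluing step is not substantiated. The interior ODE solution is determined up to pre-composition by $f^s$, and for $h_n$ to be $C^2$ you need $s\in G$. But the matching with the endpoint piece imposes conditions on $h_n$, $Dh_n$ and $D^2h_n$ at the junction, and there is no mechanism explaining why a countable set of recentering parameters in $G$ suffices to hit (or approximate in $C^2$) these constraints as $n\to\infty$. ``The density of $G$ enters here'' is exactly the place where the argument is missing.

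The paper avoids all of this by defining $h_n$ \emph{globally} by a single explicit formula: pick $\tau_n\in G$ with $\tau_n\to 0$, set $k_n=\lfloor 1/\tau_n\rfloor+1$, and let
\[
Dh_n(x)=\frac{\rho_n(x)}{\int_0^1\rho_n},\qquad \rho_n=\Big(\prod_{i=0}^{k_n-1}Df^{i\tau_n}\Big)^{1/k_n}.
\]
Since each $f^{i\tau_n}$ is $C^2$, so is $h_n$; there is no gluing. An elementary Riemann-sum lemma shows $\log Dh_n\to\log\mathcal P:=\int_0^1\log Df^t(\cdot)\,dt$ uniformly, giving $h_n\to h$ in $C^1$; the target $Y$ is not prescribed in advance but \emph{emerges} as $h_*X$, and one checks $DY=(\log Df)\circ h^{-1}$, which is $C^1$ because $f=f^1$ is $C^2$, hence $Y$ is $C^2$. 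The same averaging handles $D^2(h_nf^\tau h_n^{-1})/D(h_nf^\tau h_n^{-1})$ for $\tau\in G$, yielding the $C^2$ convergence of the conjugated flow maps. This is where the density of $G$ is actually used: it supplies arbitrarily small $C^2$ steps $\tau_n$ so that the discrete averages approximate the continuous integrals $\int_0^1\cdots\,dt$.
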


\vspace{0.1cm}

The proof of this proposition is strongly inspired by a result from \cite{mio2}. The role played there by the Weil Equidistribution Theorem is played here by the following elementary lemma:

\vspace{0.1cm}

\begin{lem}Let $\phi \!: t \in [0,1]\mapsto \phi_t\in C^0([0,1],\R)$ be a (uniformly) continuous map, and let $\tau_n$ be a sequence of positive numbers converging to $0$. If we denote $k_n$ the integral part of $\frac1{\tau_n} + 1$, then the sequence 
$\frac1{k_n}\sum_{i=0}^{k_n-1}\phi_{i\tau_n}$ converges uniformly to $\int_0^1\phi_t(\cdot) \, dt$. 
\end{lem}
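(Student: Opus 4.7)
The plan is to interpret the average $\frac{1}{k_n}\sum_{i=0}^{k_n-1}\phi_{i\tau_n}$ as (essentially) a Riemann sum for $\int_0^1\phi_t\,dt$, where the partition of $[0,1]$ has mesh $\tau_n \to 0$. The key point is that the continuous map $t\mapsto \phi_t\in C^0([0,1],\R)$ is in fact \emph{uniformly} continuous when $[0,1]$ is considered with the sup-norm codomain, since its domain $[0,1]$ is compact; this is what will let us transfer the usual Riemann-sum estimate into a \emph{uniform} estimate in the spatial variable.

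First I would record the arithmetic of $k_n$: from $k_n = \lfloor 1/\tau_n + 1\rfloor$ one gets $1/\tau_n \le k_n \le 1/\tau_n + 1$, hence
$$1 \le k_n\tau_n \le 1+\tau_n, \qquad (k_n-1)\tau_n \in [1-\tau_n,\,1],$$
so every evaluation point $i\tau_n$ (for $0\le i\le k_n-1$) lies in $[0,1]$, and $k_n\tau_n \to 1$. Next I would rewrite
$$\frac{1}{k_n}\sum_{i=0}^{k_n-1}\phi_{i\tau_n} \;=\; \frac{1}{k_n\tau_n}\sum_{i=0}^{k_n-1}\tau_n\,\phi_{i\tau_n},$$
so that the factor $1/(k_n\tau_n)$ (which converges to $1$) decouples from the Riemann-sum part.

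Given $\varepsilon>0$, uniform continuity of $t\mapsto\phi_t$ provides $\delta>0$ such that $|s-t|<\delta$ implies $\|\phi_s-\phi_t\|_\infty<\varepsilon$. For all $n$ with $\tau_n<\delta$, I would estimate
$$\Bigl\| \sum_{i=0}^{k_n-2}\tau_n\phi_{i\tau_n} - \int_0^{(k_n-1)\tau_n}\phi_t\,dt \Bigr\|_\infty \;\le\; \sum_{i=0}^{k_n-2}\int_{i\tau_n}^{(i+1)\tau_n}\!\|\phi_{i\tau_n}-\phi_t\|_\infty\,dt \;\le\; \varepsilon,$$
and bound the two leftover boundary terms $\tau_n\phi_{(k_n-1)\tau_n}$ and $\int_{(k_n-1)\tau_n}^1\phi_t\,dt$ each by $\tau_n\,M$, where $M := \sup_{t\in[0,1]}\|\phi_t\|_\infty <\infty$ (finite by compactness). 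Combining these gives
$$\Bigl\| \sum_{i=0}^{k_n-1}\tau_n\phi_{i\tau_n} - \int_0^1\phi_t\,dt \Bigr\|_\infty \;\le\; \varepsilon + 2\tau_n M.$$

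Finally, I would divide by $k_n\tau_n$ and use the triangle inequality:
$$\Bigl\|\frac{1}{k_n}\sum_{i=0}^{k_n-1}\phi_{i\tau_n} - \int_0^1\phi_t\,dt\Bigr\|_\infty \;\le\; \frac{\varepsilon + 2\tau_n M}{k_n\tau_n} + \Bigl|\frac{1}{k_n\tau_n}-1\Bigr|\,M,$$
and let $n\to\infty$; since $k_n\tau_n\to 1$, the right-hand side tends to $\varepsilon$. As $\varepsilon$ was arbitrary, uniform convergence follows. I do not foresee a genuine obstacle here: the only subtle point is packaging the standard Riemann-sum argument in $C^0([0,1],\R)$-valued form, which is handled cleanly by the uniform continuity of $t\mapsto\phi_t$ on the compact $[0,1]$.
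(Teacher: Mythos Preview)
Your proof is correct and follows essentially the same Riemann-sum-plus-uniform-continuity approach as the paper. The only difference is in bookkeeping: the paper partitions $[0,1]$ into the $k_n$ equal intervals $I_i=[i/k_n,(i+1)/k_n]$ and observes that $i\tau_n\in I_i$, which yields the estimate directly without boundary terms or the normalization factor $1/(k_n\tau_n)$; your version with step $\tau_n$ works just as well but is slightly less economical.
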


\begin{proof} 
Given $\epsilon>0$, let $\eta>0$ be such that $\|\phi_t-\phi_s\|<\epsilon$ for every $t,s\in[0,1]$ such that $|t-s|<\eta$. Let $n\in\N$ be such that $1/k_n<\eta$. If $0 \leq i \leq k_n-1$, then $i\tau_n$ belongs to $I_i := [i/k_n,(i+1)/k_n]$. Therefore, for every $x\in[0,1]$,
\begin{small}
$$\left|\frac1{k_n}\sum_{i=0}^{k_n-1}\phi_{i\tau_n}(x)-\int_0^1\phi_t(x)\,dt\right|
=\left| \sum_{i=0}^{k_n-1}\int_{I_i}\left(\phi_{i\tau_n}(x)-\phi_t(x)\right)\,dt\right|\\
\le \sum_{i=0}^{k_n-1} \int_{I_i}\left| \phi_{i\tau_n}(x)-\phi_t(x)\right|\,dt\le \epsilon,$$
\end{small}which gives the desired convergence.
\end{proof}

\vspace{0.1cm}

\begin{proof}[Proof of Proposition \ref{p:C1toC2}] 
Considering a multiple $\lambda X$ of $X$ (with $\lambda\in\R$) if necessary, we may 
assume that $f = f^1$ is a $C^2$ diffeomorphism. Consider a sequence $\tau_n$ of nonzero times converging to $0$ for which $f^{\tau_n}$ is a $C^2$ diffeomorphism. Letting $k_n-1$ be the integral part of $1/\tau_n$, we have $0<\tau_n<2\tau_n<\dots<(k_n-1)\tau_n<1$. For every $n\in\N$, we define $h_n$ by $h_n(0)=0$ and 
$$Dh_n(x) = \frac{\rho_n(x)}{\int_0^1 \rho_n(u)du}, \quad \text{where}\quad \rho_n(u) = \left(\prod_{i=0}^{k_n-1}Df^{i\tau_n}(u)\right)^{1/k_n}.$$
By definition, $\rho_n$ is $C^1$ and strictly positive; moreover, $h_n(1)=1$. Thus, $h_n$ is a $C^2$ diffeomorphism of $[0,1]$. 

\paragraph{On the $C^1$-convergence of $h_n$.} We first study the uniform convergence of 
$$\log Dh_n = \log \rho_n-\log (\textstyle{\int_0^1 \rho_n(u)du}) 
= \frac1{k_n}\sum_{i=0}^{k_n-1}\log Df^{i\tau_n} - \log \big( \int_0^1 \rho_n(u)du \big).$$
By Lemma 2 applied to $t\mapsto \log Df^t$, the first term on the right hand side converges uniformly towards 
$$x\mapsto \int_0^1 \log Df^t (x) dt=:\log \cP(x),$$
and thus the second term converges to 
$$\log\left(\int_0^1\exp\left(\int_0^1 \log Df^t(u)dt\right)du\right)= \log\left(\int_0^1\cP(u)du\right).$$

Now let us observe that the sequence $\log Dh_n$ is uniformly bounded. Indeed, since $X$ is $C^1$, there exists $C>0$ such that $\|\log Df^t\|_\infty\le C$ for every $t\in[0,1]$. In particular, for every $i\in\N$, we have $\|\log Df^{i\tau_n}\|_\infty\le C$, that is, $e^{-C}\le Df^{i\tau_n}\le e^C$. This easily implies $e^{-C}\le \rho_n\le e^C$, hence $e^{-2C}\le Dh_n\le e^{2C}$. 

By the previous remark, the uniform convergence of $\log Dh_n$ implies the convergence of $Dh_n$ towards the function $x\mapsto \frac{\cP(x)}{\int_0^1\cP(u)du}$.  Since this is positive and has integral 1 on $[0,1]$, its primitive $h$ vanishing at $0$ is a $C^1$ diffeomorphism of $[0,1]$. The same argument shows that $h_n^{-1}$ also converges in the $C^1$ sense. Necessarily, the limit is nothing but $h^{-1}$.\medskip

Notice that $h$ is $C^2$ on $(0,1)$. Indeed, on this interval, the equality $Df^t = X\circ f^t / X$ shows that $Df^t$ is $C^1$. Thus, $\log \cP$ is also $C^1$, and this yields the $C^2$ regularity of $h$.

\paragraph{On the $C^1$-convergence of $(h_n)_*X$ towards a $C^2$ vector field.} 
Let $X_n = (h_n)_*X$ (which, according to the above, converges in the $C^0$ topology towards $Y:=h_*X$). One has $X=h_n^*X_n = \frac{X_n\circ h_n}{Dh_n}$. Hence, on $(0,1)$, 
$$\log X = \log (X_n\circ h_n)-\log Dh_n.$$ 
Taking derivatives, this yields
$$\frac{DX}X=\left(\frac{DX_n}{X_n}\right)\circ h_n\cdot Dh_n - \frac{D^2 h_n}{D h_n}.$$ 
Multiplying by $X$ on both sides of this equality and using the equality $X \cdot Dh_n= X_n\circ h_n$, we obtain
\begin{equation}
DX_n\circ h_n = DX + X \cdot \frac{D^2 h_n}{D h_n}.
\end{equation}
Since $h_n^{-1}$ converges, it is enough to study the uniform convergence of $DX + X \cdot \frac{D^2 h_n}{D h_n}$. Now, since
$$\frac{D^2 h_n}{D h_n} = \frac1{k_n}\sum_{i=0}^{k_n-1} \frac{D^2 f^{i\tau_n}}{D f^{i\tau_n}} =  \frac1{k_n}\sum_{i=0}^{k_n-1}\frac{DX\circ f^{i\tau_n} -DX}X,$$ 
we have
\begin{align*}
DX + X \cdot \frac{D^2 h_n}{D h_n} &=\frac1{k_n}\sum_{i=0}^{k_n-1}DX\circ f^{i\tau_n}. 
\end{align*}
According to Lemma 2, the last expression uniformly converges towards 
$$x\mapsto \int_0^1DX(f^t(x))dt =\int_0^1\frac{DX}{X}(f^t(x)) \cdot X(f^t(x))dt,$$
and the change of variables $u=f^t(x)$ transforms this into
$$\int_x^{f^1(x)}\frac{DX}{X}(u)du =\log X(f(x))-\log X(x) = \log \Big( \frac{X(f(x)}{X(x)} \Big) = \log Df(x).$$
Therefore, $DX_n$ uniformly converges to $(\log Df)\circ h^{-1}$, which is a $C^1$ function, hence $X_n$ $C^1$-converges towards $x\mapsto \int_0^x\log Df\circ h^{-1}(u)du$, which is $C^2$. 

Let us now check that this limit is indeed $h_*X = (Dh\circ h^{-1})(X\circ h^{-1})$ or, equivalently, that, on $(0,1)$, 
$$(\log Df)\circ h^{-1} = D((Dh\cdot X)\circ h^{-1}).$$
To do this, notice that, since 
$$D((Dh\cdot X)\circ h^{-1})=\frac{D^2h\cdot X+Dh\cdot DX}{Dh}\circ h^{-1} = \left( X \cdot \frac{D^2 h}{D h} + DX \right)\circ h^{-1},$$ 
we simply need to check that \, $\log Df = X \cdot \frac{D^2 h}{D h} + DX$, \, and this follows from
\begin{small}
\begin{align*}
X(x)\cdot \frac{D^2 h}{D h} (x) 
&= X(x)\cdot D\log\cP(x) 
= \int_0^1X(x)\cdot \frac{D^2 f^t(x)}{D f^t (x)} \, dt \\
&= \! \int_0^1 \!\! \big( DX(f^t(x))-DX(x) \big)\, dt
 = \! \int_0^1 \!\! DX(f^t(x))\, dt -DX(x)
=\log Df(x)-DX(x),
\end{align*}
\end{small}where we have used a previous computation in the last step.

\paragraph{On the $C^2$-convergence of $f_n^\tau =h_n\circ f^\tau \circ h_n^{-1}$ if $f^\tau $ is $C^2$.} 
Let $\tau\in\R$ be such that $f^\tau $ is $C^2$. Given the $C^1$-convergence of $h_n$ and $h_n^{-1}$, we already have the $C^1$-convergence of $f_n^\tau $ towards the time-$t$ map $h\circ f^\tau \circ h^{-1}$ of $Y=h_*X$. Hence, it suffices to check the convergence of $\frac{D^2 f_n^\tau} {D f_n^{\tau}}  
= \frac{D^2 (h_n\circ f^\tau \circ h_n^{-1})}{D (h_n\circ f^\tau \circ h_n^{-1})}$. 
To do this, first notice that
$$Df_n^\tau  (x) 
= \big( Dh_n(f^\tau \circ h_n^{-1})\cdot Df^\tau (h_n^{-1})\cdot Dh_n^{-1} \big) (x) 
= \frac{Dh_n(f^\tau (y))}{Dh_n(y)}\cdot Df^\tau (y),$$
where $y=h_n^{-1}(x)$. Hence, by the chain rule, 
\begin{align*}
Df_n^\tau  (x) 
&=Df^\tau (y)\cdot \left(\frac{\prod_{i=0}^{k_n-1}Df^{i\tau_n}(f^\tau (y))}{\prod_{i=0}^{k_n-1}Df^{i\tau_n}(y)}\right)^{1/k_n}\\
&= \prod_{i=0}^{k_n-1}\left(\frac{Df^{i\tau_n+\textcolor{black}{\tau}}(y)}{Df^{i\tau_n}(y)}\right)^{1/k_n}
=\prod_{i=0}^{k_n-1} \big( Df^\tau (f^{i\tau_n}(y) \big)^{1/k_n}
=\prod_{i=0}^{k_n-1} \big( Df^\tau (f^{i\tau_n}\circ h_n^{-1}(x)) \big)^{1/k_n}.
\end{align*}
Thus,
\begin{footnotesize}
\begin{align*}
\frac{D^2 f_n^\tau}{D f_n^{\tau}}  (x)&= \frac1{k_n}\sum_{i=0}^{{k_n}-1}D(\log Df^\tau \circ f^{i\tau_n}\circ h_n^{-1})(x) 
= \frac1{k_n}\sum_{i=0}^{{k_n}-1} \left(\frac{D^2 f^\tau}{D f^{\tau}} (f^{i\tau_n}\circ h_n^{-1})\cdot Df^{i\tau_n}\circ h_n^{-1}\cdot Dh_n^{-1}\right)(x)\\
&=Dh_n^{-1}(x)\cdot \frac1{k_n}\sum_{i=0}^{{k_n}-1} \frac{D^2 f^\tau}{D f^{\tau}} (f^{i\tau_n}(h_n^{-1}(x)))\cdot Df^{i\tau_n}(h_n^{-1}(x)).
\end{align*}
\end{footnotesize}Again, thanks to Lemma 2, the latter expression uniformly converges towards 
$$Dh^{-1} (x) \cdot \int_0^1 \frac{D^2 f^\tau}{D f^{\tau}} (f^t(h^{-1}(x)))\cdot Df^t(h^{-1}(x)) \, dt,$$
Since this is a continuous function in $x$, this closes the proof.
\end{proof} 

\vspace{0.1cm}

\begin{proof}[Proof of Theorem \ref{t:rapproch-C2}] Let $f \in \Diff^{2,\Delta}_{+}([0,1])$ have a $C^2$ centralizer larger than infinite cyclic. Then its left and right Szekeres vector fields coincide, say they are equal to a certain $C^1$ vector field $X$ 
(\cf \cite{yoccoz}). Moreover, the $C^2$ centralizer of $f$ corresponds to the times of $X$ that yield to $C^2$ diffeomorphisms. By assumption, the corresponding subgroup of $\R$ is not cyclic, so it is dense. Thus, $X$ satisfies the hypothesis of Proposition \ref{p:C1toC2}, which ensures the existence of $\tilde f\in \Diff^{2,\Delta}([0,1])$ lying in the $C^2$ closure of the conjugacy class of $f$ that is the time-$1$ map of a $C^2$ vector field. If $f$ has parabolic fixed points, then the same holds for $\tilde{f}$. By Proposition \ref{t:champ-C2}, for each $\epsilon>0$ there exists $h\in \Diff^{2}([0,1])$ such that $\|h\tilde f h^{-1}-\id\|_2<\epsilon/2$. Obviously, this yields the existence of $\delta>0$ such that 
$$\|\bar f - \tilde f\|_2<\delta \quad \Rightarrow \quad \|h \bar f h^{-1}-\id\|_2 < \epsilon.$$
Since $\tilde{f}$ lies in the $C^2$ closure of the conjugacy class of $f$, there is $\bar h$ 
such that $\|\bar h f \bar h^{-1}-\tilde f\|_2 < \delta.$ Putting everything together, we obtain
$$\big\| (h\bar h)f(h\bar h)^{-1}-\id \big\|_2 < \epsilon.$$
Since $\varepsilon > 0$ was arbitrary, this shows that the identity lies in the $C^2$ closure of the conjugacy class of $f$, 
which finishes the proof.
\end{proof}


\vspace{0.1cm}

\noindent{\bf Acknowledgments.}  Andr\'es Navas was funded by the projects FONDECYT 1200114 (in Chile) and 
FORDECYT 265667 and the PREI of the DGAPA at UNAM (in M\'exico). Both authors were funded by the ANR project GROMEOV 
(in France), and would like to thank Casa Matem\'atica Oaxaca for the nice atmosphere and excellent working conditions during 
the conference ``Ordered groups and rigidity in dynamics and topology'', where part of this work was conceived.


\begin{footnotesize}

\vspace{0.1cm}

\noindent H\'el\`ene Eynard-Bontemps \hfill{Andr\'es Navas}

\noindent  IMJ - PRG, Sorbonne Universit\'e (Paris) \hfill{Dpto de Matem\'atica y C.C., U. de Santiago de Chile}

\noindent Institut Fourier, Universit\'e Grenoble Alpes \hfill{Unidad Cuernavaca Inst. de Matem\'aticas, UNAM}

\noindent Email address: helene.eynard-bontemps@imj-prg.fr \hfill{Email address: andres.navas@usach.cl}






\end{footnotesize}

\end{document}